\newcommand{\pic}[2]{\includegraphics[scale=#1]{#2}}
\newcommand{\picc}[2]{\centering\pic{#1}{#2}}
\newcommand{\minp}[2]{\begin{minipage}{#1\textwidth}#2\end{minipage}}
\newcommand\bx{{\bar x}}
\newcommand{\rank}{\mathop{\mbox{rank}}}
\newcommand{\lra}{\mathop{\longrightarrow}\limits}
\newcommand{\diag}{\mathop{\mbox{diag}}}
\newcommand{\tV}{{\widetilde{V}}}
\newcommand{\hV}{{\widehat{V}}}
\newcommand{\cV}{{\mathcal{V}}}
\newcommand{\R}{{\rm \bf R}}
\renewcommand{\Im}{{\mathop{\rm Im\,}}}
\newcommand{\Rs}{{\mathscr R}}
\newcommand{\Ss}{{\mathscr S}}
\newcommand{\Ns}{{\mathscr N}}
\newcommand{\Cs}{\mathscr C}
\newcommand{\tl}{\tilde}
\newcommand\ve{\varepsilon}
\newcommand{\wt}{\widetilde}
\newcommand\graphicalLF{Graphical Lyapunov Function }
\newcommand\comment[1]{}
\definecolor{lightgray}{rgb}{0.895,.895,.895}
\newcommand\mybox{}
\renewcommand{\ker}{{\mathop{\rm Ker\,}}}
\newcommand{\Imgamma}{\Im \Gamma}
\newcommand{\imA}{\Im A}
\newcommand{\kerGamma}{\ker \Gamma}
\newcommand{\kerA}{\ker A}
\newcommand{\kerV}{\ker V}
\newcommand{\kertV}{\ker \tV}
\newcommand{\kerQ}{\ker Q}
\newtheorem{remark}{Remark}%
  \def\ell{}%
  \def\infty{}%
\begin{document}
\title{\Large\bfseries\sffamily On structural contraction   of  biological interaction networks}
\author{M. Ali Al-Radhawi, David Angeli, and Eduardo D. Sontag}
\date{\today}
\maketitle

\begin{abstract}
	Biological networks are customarily described as \emph{structurally robust}. This means that they often function extremely well under large forms of perturbations affecting both the concentrations and the kinetic parameters. In order to explain this property, various mathematical notions have been proposed in the literature. In this paper, we propose the notion of \textit{structural contractivity}, building on the previous work of the authors. That previous work characterized the long-term dynamics of classes of Biological Interaction Networks (BINs), based on “rate-dependent Lyapunov functions”. Here, we show that stronger notions of convergence can be established by proving {structural contractivity} with respect to non-standard polyhedral $\ell_\infty$-norms. In particular, we show that such networks are nonexpansive. With additional verifiable conditions, we show that they are strictly contractive over arbitrary positive compact sets. In addition, we show that such networks entrain to periodic inputs. We illustrate our theory with examples drawn from the modeling of intracellular signaling pathways.
\end{abstract}

\begin{keywords}
Structural robustness, Stability, Contraction, Biological Interaction Networks, Systems Biology, Entrainment 
\end{keywords}

\begin{MSCcodes}
93D05 , 80A30, 47H09, 37B25, 05C90, 92B99, 34A34, 34D30, 37C20
\end{MSCcodes}
 \section{Introduction}
 
 The last quarter of a century witnessed an accelerating development of dynamical models and methods for the analysis and synthesis of biological systems, largely enabled by advances in data collection and processing \cite{alon06,ingalls_book,klipp16}. Applications range from unveiling complex biology \cite{MA22_pnas}, understanding cancer dynamics \cite{kreeger10}, precision medicine \cite{chelliah21}, and drug development \cite{sorger11}, to epidemiology \cite{avram24}.
The scope of such models includes interactions in biology at all scales, from the molecular, to the cellular, tissue, organism, and population level, as all these interactions share dynamical features that can be often analyzed using a common mathematical language.

One of the most important aspects of biological systems is that they tend to self-regulate exceedingly well against large uncertainties. An example of this is homeostasis, understood as the maintenance of a desired steady state in the face of internal fluctuations or exogenous perturbations. A complementary aspect is the ability to accurately respond to specific time-varying external signals while ignoring such internal fluctuations and irrelevant external perturbations. The roles of the system-level interactions and feedback loops that underlie such regulation have long been identified in biology \cite{umbarger56,savageau72}, and they have been better understood through the application of novel data collection and experimental measurements in the last 25 or so years. In conjunction, novel mathematical methods helped reveal that the regulatory architectures employed in biology are indeed inherently robust against variations in parameters and \emph{in-vivo} concentrations, in contrast to relying upon the fine-tuning of parameters and concentrations \cite{barkai97,alon99,stelling04}. In fact, robustness has been proposed as a key \emph{defining} property of biological networks \cite{morohashi02,kitano04}. A common framework to explain such robustness postulates that biology employs robust \emph{motifs} \cite{prill05}, which means that the uniform qualitative behavior is endowed by the network structure regardless of the parameters  \cite{bailey01,gupta22,araujo23}.
 
However, despite initial success, the rational analysis and synthesis of dynamical robustness in biology is considerably harder to achieve than in engineered systems. This is due to many factors.  First, biology pervasively employs nonlinear \textit{binding} events to relay information. Examples include the binding of enzymes to substrates, ligands to receptors, transcription factors to promoters, small molecules to proteins, etc. Such events are intrinsically nonlinear, and linearization is often unhelpful. Second, biological networks suffer from severe forms of uncertainty. For instance, the kinetics governing the speed of binding events and biochemical transformations are seldom known.  Such uncertainty is not limited to the difficulty of measuring the relevant parameters and identifying the functional forms, but also includes the fact that biological networks operate in highly variable environments. This lack of certainty precludes the availability of complete mathematical models.

For generic uncertain nonlinear models, it can easily be the case that small fluctuations in concentrations, or small variations in kinetic parameters, might lead to large effects causing trajectories to deviate from a desired region in the state space, and/or lose stability altogether, for example, becoming oscillatory or chaotic.  Although such  bifurcations or ``phase transitions'' can be desirable in some cases (e.g., genetic oscillators \cite{elowitz00} or toggle switches \cite{collins00}), they can cause the system to malfunction and/or make key species reach \emph{unsafe} levels. In fact, disease may be often defined mathematically as the loss of stability of a healthy phenotype \cite{maclean15,langlois17}. Thus, it is desirable to develop
robust techniques that allow for deriving quantitative and qualitative conclusions about a system of interest without \textit{a priori} knowledge of the kinetics, and help to identify those architectures for which a change in parameters cannot lead to a significant change in the character of global dynamic behavior. Such efforts  have been advocated as ``complex  biology without parameters'' \cite{bailey01}.

\paragraph{A useful formalism}

A very useful class of models in biology can be formulated in the language of \textit{Biological Interaction Networks} (BINs). This class of systems is also known as \textit{Chemical Reaction Networks} (CRNs), but we prefer to de-emphasize the ``chemical'' aspect because these models apply at all levels of scale. To give a concrete example of what we mean by this, consider the ``chemical reaction'' represented formally as follows:
\[
S+K \;\leftrightharpoons\; C \; \to \; P + K
\]
which can represent processes as different as biomolecular, epidemiology, or ecological interactions, Fig.~\ref{fig:examples_enzymatic}.
We formally define BINs in Section~\ref{sec:BIN}.

\begin{figure}[ht]
\minp{0.32}{\picc{0.75}{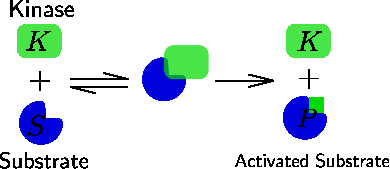}}%
\minp{0.3}{\picc{0.8}{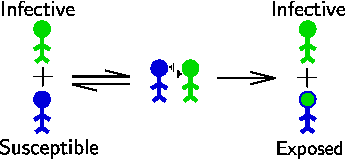}}%
\minp{0.38}{\picc{0.18}{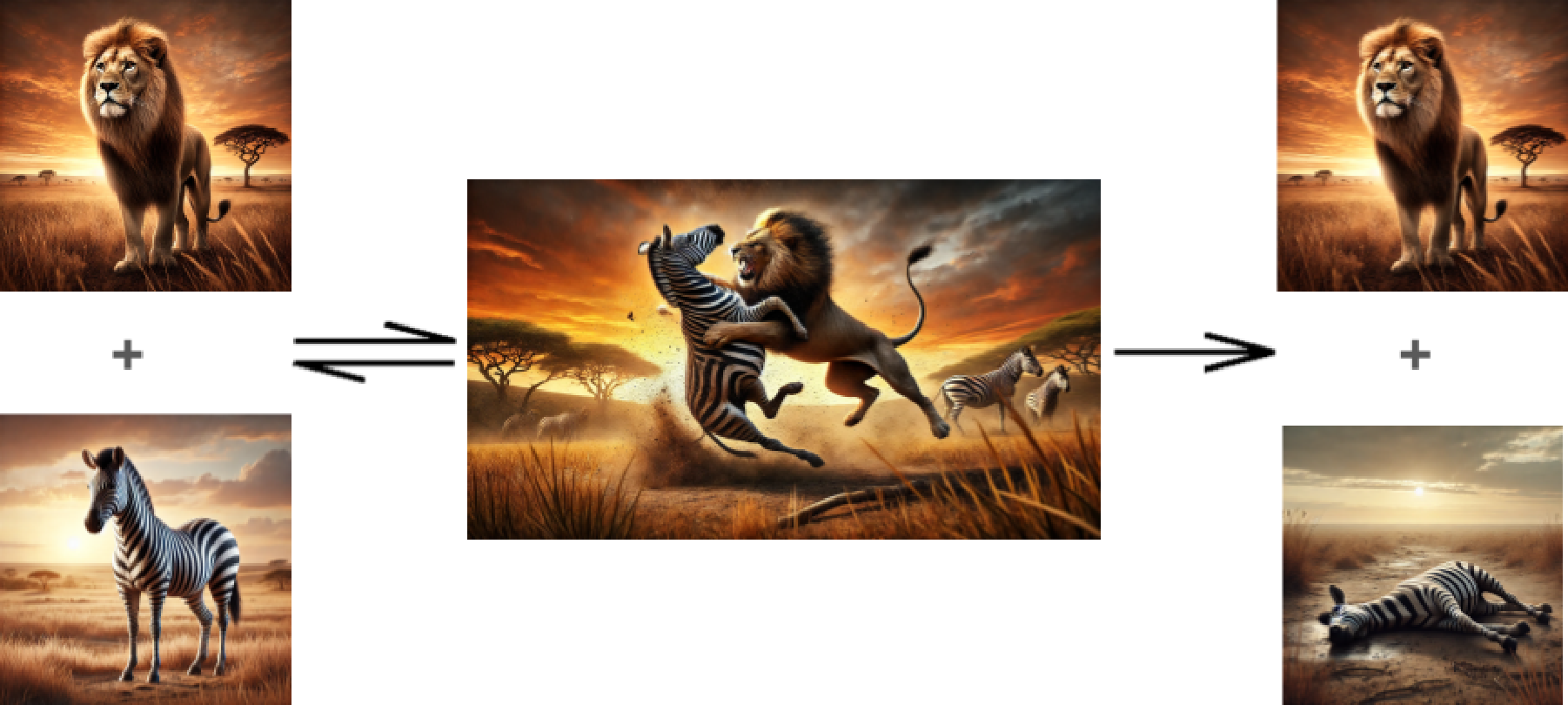}}

\null\ 

\minp{0.32}{\centerline{(a)}}%
\minp{0.3}{\centerline{(b)}}%
\minp{0.38}{\centerline{(c)}}%

\caption{The same BIN formalism 
$S+K \leftrightharpoons C \to P + K$
can represent processes as varied as:
(a) an interaction between a catalyst (in the figure, an enzyme kinase) and its substrate, in which the two species can temporarily interact in a reversible fashion, sometimes resulting in the catalyst being released and the substrate being modified (``product'');
(b) an interaction between an infective individual and a susceptible individual, in which the two individuals meet and either go their way without any new infection, or result in the susceptible individual becoming exposed to the disease;
or
(c) a predator/prey interaction in which the prey might not be harmed or the pr{e}y may end up injured or dead.}
\label{fig:examples_enzymatic}
\end{figure}

\paragraph{Notions of robustness} 

By \textit{structural dynamical robustness}, we mean, informally, that a BIN obeys some rigid constraints on its dynamics owing to its graphical structure, regardless of  kinetic parameters or the functional form of the kinetics.

Several notions have been proposed for the study of structural dynamic robustness, and despite the perceived difficulty, the effort has been strikingly successful for some classes of nonlinear BINs. An early example was Horn-Jackson-Feinberg (HJF) theory \cite{horn72,feinberg87}, where structural stability was shown for complex-balanced networks with mass-action kinetics. The method uses the sum of all the chemical pseudo-energies stored in species as a Lyapunov function, and it can establish \textit{global stability} in many cases \cite{sontag01,anderson11}.  Robust local stability has also been studied \cite{clarke80,blanchini19}. Another notion is \emph{structural persistence} which precludes species from asymptotically going extinct during the course of reaction, and has been shown for networks lacking a specific motif known as a \textit{critical siphon} \cite{angeli07p}. Another notion is \emph{structural monotonicity} which ensures that trajectories preserve a partial order, which in turn guarantees generic convergence under mild conditions \cite{angeli10,banaji13}.  
Yet another notion is that of \emph{robust non-oscillation}, where a test was provided for a given network not to admit oscillations \cite{MA_TAC22}. 

The direction which is most relevant to our paper relies upon the notion of structural stability of BINs as certified via piecewise linear (PWL) Lyapunov functions in reaction \cite{PWLRj,MA_LEARN} or concentration coordinates \cite{MA_cdc14,blanchini14,MA_LEARN}.   Such functions can either be constructed graphically \cite{PWLRj,blanchini14,MA_LEARN} or computationally \cite{MA_MCSS23}, and a dedicated computational package is available \cite{MA_LEARN}. In addition to establishing convergence, such Lyapunov functions allow the construction of ``safe sets'' that entrap the trajectories of the dynamical system for all future times, thus guaranteeing that undesirable regions of state space cannot be reached.
Compared to HJF theory, the PWL functions proposed in \cite{PWLRj,MA_LEARN} have a different functional form, distinct stability properties, 
and they are applicable to a large class of kinetics that are not mass-action. 
The main subject of this paper is to explore implications of PWL Lyapunov functions %
to structural \emph{contraction}.
 
\paragraph{Contraction} 
  
Informally, a dynamical system is \emph{strictly contractive} with respect to a given metric if it satisfies a strong form of incremental stability: it is required that the  distance between any two trajectories converges exponentially to zero, with no overshoot. A weaker version is that of a \textit{nonexpansive} system, in which this distance is only required to be non-increasing. 
Contraction can be compared with Lyapunov global stability, where the latter property merely requires that all trajectories converge to a steady-state. In globally asymptotically stable systems, the distance between any two trajectories can overshoot, and might not exponentially decrease to zero. Therefore, contraction is a much stronger notion than global stability. Contraction is also stricter than global \textit{exponential} stability to an equilibrium with no overshoot, as studied in some detail in \cite{duvall_sontag_2023_arxiv_ges_contractions}.
Early results on contractive systems were developed in the late 1950s using the notion of logarithmic norms \cite{dahlquist58,lozinskii58}. More recently, there has been renewed interest in this form of analysis \cite{lohmiller98,nijmeijer06,sontag14,forni14,aminzare_sontag_pde2015,davydov22,bullo22}.

Contraction offers many advantages. First, it has been argued \cite{bullo22} that the properties enjoyed by contractive systems are more akin to linear systems, and hence they offer a powerful general framework to analyze a much wider class of nonlinear systems with the same ease that one can analyze linear systems. Second, contractive systems \emph{entrain to periodic inputs} \cite{lohmiller98,sontag10,margaliot16}, which means that the state of a contractive system will asymptotically oscillate with the same frequency as a periodic forcing input. This property is important to biological systems, which often need to entrain to external inputs such as the circadian rhythm, or upstream events such as the cell cycle. Third, using contraction analysis methods, a system with unbounded trajectories can sometimes be shown to be nonexpansive, which greatly constrains its unstable behavior, while Lyapunov analysis does not offer further information when the trajectories are unbounded. 
 
In the context of BINs, contraction has been only studied with respect to standard norms or with respect to diagonal scaling of such norms \cite{DiBernardo10,aminzare_sontag_pde2015,vaghy23}. Contraction with respect to general polyhedral norms has been studied recently for the special case of systems that are monotone with respect to partial orders \cite{jafarpour22}.

\paragraph{This paper} 

We introduce the notion of \emph{structural contraction}, and show how this property can be deduced from the conditions for stability introduced in \cite{PWLRj,MA_LEARN} and implemented in the computational package \texttt{LEARN}. In particular, we show that the existence of the rate-dependent Lyapunov functions proposed in \cite{PWLRj,MA_LEARN} for a given {BIN} implies structural nonexpansivity with respect to weighted $\ell_\infty$ norms, and strict contractivity on positive compact sets under additional but easily verifiable conditions.  Furthermore, we show that such networks obey the notion of ``contraction after a small overshoot and short transient'' that was introduced in \cite{margaliot16}, and this fact allows us to show entrainment to periodic inputs. Therefore, the powerful tools of contraction theory can be applied to a rich class of BINs which can be characterized computationally via the MATLAB package \texttt{LEARN} (\texttt{github.com/malirdwi/LEARN}) or graphically \cite{MA_MCSS23}. %
  
 The paper is organized as follows. We first review the main results. Section 2 discusses preliminaries including extensions of previous results that are relevant to our current analysis. Section 3 shows that nonexpansivity follows from the existence of an appropriate rate-dependent Lyapunov function. Section 4 establishes contraction over positive compact sets. Section 5 shows  contraction after a small overshoot and short transient and  entrainment to periodic inputs. Section 6 discusses biochemical examples in detail, and conclusions are discussed in Section 7.
 
 \subsection{Overview of Results}

In this subsection, we offer an \textit{informal} guide that summarizes the main results of the paper.  They apply to BINs whose dynamics can be described by $\dot x=\Gamma R(x)$ where $\Gamma$ is the stoichiometry matrix and $R$ is the reaction rate. The only assumption needed is that the kinetics are monotone  (refer to \S 2 for the exact assumptions){.}

\paragraph{Review of PWL Lyapunov functions in rate} The results in \cite{PWLRj,MA_LEARN} provided functions that can be defined using the stoichiometry of the network without knowledge of the kinetics.  We first define a rate-dependent Graphical Lyapunov Function (GLF) $\tV(r)$, where $r$ has the dimension equal to the number of reactions, see Definition \ref{def.GLF}. It is characterized by the existence of a common Lyapunov function for a set of rank-one linear systems, see Theorem \ref{t.comLF}.
This means that $V(x)=\tV(R(x))$ is a Lyapunov function which is always non-increasing  along the dynamics, see Proposition \ref{lyap_dec}. In addition, if $\tV$ is invariant with respect to translation by $\Gamma$, then its existence is equivalent to the existence of a dual function $\hV$ satisfying $\hV(\Gamma r)=\tV(r)$,  see Theorem \ref{th.equiv}. Then, $\hV(x-\bx)$ is a Lyapunov function in the stoichiometric class compatible with a given steady state $\bx$, see Corollary \ref{cor.dualGLF_dec}. 

The computational algorithms are mainly developed for GLFs of the form $\tV(r)=\|C r \|_\infty$. For a given network, an $\ell_\infty$-GLF exists if a convex feasibility problem can be solved, see Corollary \ref{cor}. 
In addition, the dual GLF function is $\hV(z)=\|Bz\|_\infty$, see Corollary \ref{th.equiv_piecewise}.
 
\paragraph{Nonexpansivity and global convergence} 

By nonexpansivity with respect to a given norm $\|.\|_*$, we mean that the distance between two arbitrary trajectories $\|x_1(t)-x_2(t)\|_*$ does not increase over time. This can be characterized via non-positivity of the logarithmic norm of the Jacobian, see Theorem \ref{th.contraction}.

Our \textit{main result} states that if an $\ell_\infty$ GLF $\tV(r)=\|B\Gamma r\|_\infty$ exists for a network,
then the norm of the difference between two trajectories that lie in the same stoichiometry class is non-increasing along time,
see Theorem \ref{th.mainB}, thus establishing that the network is structurally nonexpansive with respect to the norm $\|.\|_B: z\mapsto \|Bz\|_{\infty}$. Furthermore, we provide an upper bound on the logarithmic norm of the Jacobian in terms of the partial derivatives of $R$, see Lemma \ref{mainlemmaB}.

In addition, we show that existence of a steady state automatically implies uniform boundedness, see Corollary \ref{cor.boundedness}. Ways to verify global stability are discussed in \S \ref{s.global}. 

\paragraph{Strict contraction over positive compact sets} 

In order to establish \textit{strict contraction} over positive compact sets, we utilize two graphical concepts. We first define trivial siphons (Definition \ref{d.trivialSiphon}), which help check when the trajectories of a network are uniformly separated from the boundary of the positive orthant, see Lemma \ref{eventuallyK}. Then, we introduce the concept of weak contraction of a network in terms of the connectivity of a specific matrix, see Definition \ref{def.weakContraction}.

Given the satisfaction of these two verifiable conditions, we show that a network is structurally strictly contractive over positive compact sets with respect to the norm $\|.\|_{PB}: z \mapsto \|PBz\|_\infty$, where $P$ is an appropriately-defined diagonal matrix that might depend on the choice of the compact set, see Theorem \ref{th.scalednorm}.

\paragraph{Entrainment} 

We then turn to the study of BINs where one or multiple parameters are periodic time-varying functions. We ask if all trajectories \emph{entrain} to the same period. To that end, we use our result establishing contraction over positive compact sets to show when a given BIN is also contractive with short transients and small overshoots, see Theorem \ref{th.glf_sost}. 
Once this is known, the results from \cite{margaliot16}, allow us to conclude that trajectories of a periodic time-varying system that stay in the same stoichiometric class entrain to periodic inputs, see Theorem \ref{th.entrainment}.

\section{Background and preliminaries}

\subsection{Notations}
 
For any set $S$, $|S|$ denotes its cardinality. The set of $n$-tuples of positive numbers is denoted by $\mathbb R_+^n$, while the set of $n$-tuples of non-negative numbers is denoted by $\mathbb R_{\ge0}^n$. The set of $n\times m$ matrices with real entries is denoted by $\mathbb R^{n\times m}$. For any subset $U\subset \mathbb R^{n\times m}$, its closure with respect to the standard Euclidean topology is denoted by $\bar U$.  If $A\in\mathbb R^{n\times m}$ is a matrix, then $\kerA$ is its kernel or null space, while $\imA$ is its image space. For any function $V:\mathcal X \to \mathbb R$, $\kerV:=\{x\in\mathcal X| V(x)=0\}$, and for a subset $\mathcal Y \subset \mathcal X$, $\kerV|_{\mathcal Y}:=\{x\in\mathcal Y| V(x)=0\}$. 
For $x=[x_1,\dots,x_n]^T \in \mathbb R^n$, $\|x\|_\infty:=\max_i |x_i|$. The notation $x\gg0$ means that $x  \in\mathbb R_+^n$, {while $x\ge0$ means that $x\in\mathbb R_{\ge0}^n$}. For a square matrix $A\in\mathbb R^{n\times n}$, $\sigma_i(A):= a_{ii}+\sum_{j\ne i} |a_{ij}|$, $i=1,\dots,n$, and $\mu_\infty(A):=\max_i \sigma_i(A)$. For a subspace $W \subset \mathbb R^n$, the quotient space $\mathbb R^n/W $ is the set of all sets of the form $\{x+y|y\in W\}$ where $x \in \mathbb R^n$. %
 
\subsection{Biological Interaction Networks}
\label{sec:BIN}
 
We review standard material \cite{feinberg87,erdi89,sontag01,MA_LEARN}.
 
\paragraph{Definition.} 

A Biological Interaction Network (BIN) 
$\mathscr N$ is specified as a pair $(\mathscr S,\mathscr R)$  where $\mathscr S:=\{X_1,\dots,X_n\}$ is the set of symbols called \textit{species}, and $\mathscr R:=\{\R_1,\dots,\R_\nu\}$ is a set of symbols called \textit{reactions}. Each reaction is associated to a formal rule of the following form:
 \[\R_j:~~ \sum_{i=1}^n {\alpha_{ij}}X_i \longrightarrow \sum_{i=1}^n \beta_{ij} X_i, \]
 where the nonnegative real numbers $\alpha_{ij},\beta_{ij}\ge 0$ are called the \textit{stoichiometric coefficients} of the respective reaction. 
 The $X_i$'s that appear with nonzero coefficients in the left-hand side are called the \textit{reactants} of the reaction $j$, and the $X_i$'s that appear with nonzero coefficients in the right-hand side are called the \textit{products} of the reaction $j$.
 In many applications the stoichiometric coefficients are integers, and one thinks of {the {$j$}th} reaction as saying that $\alpha_{{1j}}$ copies of $X_1$, \ldots,
 $\alpha_{{nj}}$ copies of $X_n$ combine to produce
 $\beta_{{1j}}$ copies of $X_1$, \ldots,
 $\beta_{{nj}}$ copies of $X_n$.
 The net ``loss'' (if negative) or ``gain'' (if positive) of the $i$th species in the $j$th reaction is  $\gamma_{ij}=\beta_{ij}-\alpha_{ij}$. These numbers can be collected into a matrix $\Gamma \in\mathbb R^{n\times \nu}$ defined entry-wise as $[\Gamma]_{ij}= \gamma_{ij}$.  The matrix $\Gamma$ is customarily called the \textit{stoichiometry matrix}.
 
 \paragraph{Kinetics.} 
 
 In order to quantify elements of the network, the species $X_1,\dots,X_n$ are assigned  non-negative numbers known as the \emph{concentrations} (representing the density of elements in a certain region of space) $x_1,\dots,x_n$, while reactions $\R_1,\dots,\R_\nu$ are assigned \textit{rates} $R_j:\mathbb R_{\ge 0}^n \to \mathbb R_{\ge0}, j=1,\dots,\nu$.  These rates describe ``how fast'' each reaction can occur, depending on the concentration of its reactants. The most common form of reaction rates is known as \textit{mass-action} kinetics, which are those of the form $R_j(x) = {k_j} \prod_{i=1,\alpha_{ij}>0}^n  {x_i}^{\alpha_{ij}}$, where $k_j>0$ is known as the kinetic constant. However, here we do not need to restrict to a specific form of kinetics. Instead we only assume that each such $R_j$ is monotone with respect to its reactants. More precisely, a reaction rate $R$ is \textit{admissible} if it satisfies the following general properties: \begin{itemize}
  \item[{\bf AK1:}] each reaction varies smoothly with respects to its reactants, i.e{.} $R(x)$ is $\mathscr C^1$;
  \item[{\bf AK2:}] a reaction requires all of its reactants to occur, i.e{.},  if $\alpha_{ij}>0$, then $x_i=0$ implies  $R_j(x)=0$; %
  \item[{\bf AK3:}] if a reactant increases, then the reaction rate increases, i.e{.} ${\partial R_j}/{\partial x_i}(x) \ge 0$ if $\alpha_{ij}>0$ and ${\partial R_j}/{\partial x_i}(x)\equiv 0$ if $\alpha_{ij}=0$. Furthermore, the aforementioned inequality is strict whenever the reactants are strictly positive.
 \end{itemize}
 In our paper, we assume every network $\Ns$ considered is equipped with a set of monotone kinetics satisfying AK1-AK3.   
 
 The admissible kinetics can be interpreted alternatively via a sign-pattern constraint on the Jacobian $\partial R/\partial x \in \mathbb R^{\nu\times n}$. More formally, let $\mathcal P_\Ns$ denote the set of reactant-reaction pairs, i.e., $\mathcal P_\Ns:=\{(i,j)| X_i~\mbox{is a reactant of}~\R_j\}$, and let $s=|\mathcal P_\Ns|$.  We define the set of all matrices that have an admissible sign pattern as \begin{equation}\label{K_def}\mathcal K_\Ns := \{ K \in \mathbb R^{\nu \times n} | [K]_{ji}>0~\mbox{if}~(i,j)\in\mathcal P_\Ns,~\mbox{and}~[K]_{ji}=0~\mbox{otherwise}\}. \end{equation} Therefore, when $R$ is admissible, we have  $\partial R/\partial x(x) \in \mathcal K_\Ns$ for $x\in\mathbb R_{+}^{n}$ and $\partial R/\partial x(x) \in \overline{\mathcal K_\Ns}$ for $x\in\mathbb R_{\ge 0}^{n}$.
 
Another equivalent formulation is obtained by decomposing $R$ into a conic sum of indicator matrices. To that end, let $E_{ji} \in \{0,1\}^{\nu \times n}$ be an indicator matrix, i.e{.}, $E_{ji}$ is zero everywhere except for the $(j,i)$th entry where it is equal to 1. In addition, let the elements of $\mathcal P_\Ns$ be indexed as $(i_1,j_1),\dots,(i_s,j_s)$.  Therefore, we can write \begin{equation}\label{e.calK} \mathcal K_\Ns=\{K \in \mathbb R^{\nu\times n}|K=\sum_{\ell=1}^s \bar\rho_\ell E_{j_\ell i_\ell},~\mbox{for some}~\bar\rho_1,\dots,\bar\rho_s>0\}.\end{equation} Hence, the admissibility of  $R$ implies the existence of continuous functions $\rho_1(x),\dots,\rho_s(x)\ge 0$ such that
\begin{equation}\label{e.dR}
 	\frac{\partial R}{\partial x}(x)= \sum_{\ell=1}^s \rho_\ell(x) E_{j_\ell i_\ell},
\end{equation}
and $\rho_\ell(x)>0$ whenever $x\in\mathbb R_+^n$.
 
\paragraph{Dynamics} 
 
The time evolution of the concentration vector $x=[x_1,\dots,x_n]^T$ as a function of time is by definition described by the following vector Ordinary Differential Equation (ODE):
\begin{equation}\label{e.ode}
 	\dot x \; = \; \Gamma R(x), \;x(0)=x_0.
\end{equation}
It is an easy exercise to show that this ODE is a \emph{positive system}, which means that the coordinates of the trajectory $x(t)$ remain non-negative if the initial condition $x(0)$ is non-negative. 
 
A non-zero vector $w\in\mathbb R_{\ge 0}^n/\{0\}$ is called a \textit{conservation law} if $w^T\Gamma=0$. If there is a $w$ that is strictly positive then the corresponding network is called \emph{conservative}. Note that this implies that $w^T\dot x(t)=0$ for all $t$, so $w^Tx$ is a conserved quantity that stays invariant, i.e., $w^Tx(t)\equiv w^T x(0)$. If such a $w$ exists, all trajectories will be confined to translates of the proper subspace $\Imgamma$ as can be seen by integrating \eqref{e.ode} to get $x(t)=x_0 + \Gamma \int_0^t R(x(\tau))\,d\tau$. Therefore, $x(t) \in \Cs_{x_0}$ for all $t\ge0$ where
\[
\Cs_{x_0}:= (\{x_0\} + \Imgamma)\cap \mathbb R_{\ge0}^n \,.
\]
The forward-invariant subset $\Cs_{x_0}$ is a closed and convex set known as the \textit{stoichiometric class} corresponding to $x_0$. {A stoichiometric class is proper if it intersects the open positive orthant. }

Suppose that $V:{\mathbb R}_{\ge0}^n \rightarrow {\mathbb R}$ is a differentiable function. Given any solution of a differential equation \eqref{e.ode}, the derivative of $V(x(t))$ with respect to $t$ can be expressed as $\dot V(x(t))$, where $\dot V$ is the function $(\partial V/\partial x) \Gamma R(x)$. This is the classical way to develop Lyapunov stability theory.  In this work,  we deal with locally Lipschitz but not necessarily differentiable functions $V$. For such functions, the gradient exists almost everywhere (Rademacher’s Theorem). For a global notion, we use the upper Dini derivative \cite{yoshizawa}: \begin{equation}\label{dini} \dot V(x):=\limsup_{h\to 0^+} \frac{V(x+h\Gamma R(x))-V(x)}h,\end{equation}  
which is defined everywhere for locally Lipschitz functions.
 
\paragraph{Fluxes}	A vector $v$ is called a \textit{flux} if $\Gamma v=0$.  In order to simplify the treatment, we will  assume the following about the stoichiometry of the network:
\begin{itemize}
\item[\textbf{AS1}:] 
There exists a positive flux, i.e.,  $\exists v \in \kerGamma$ such that $v\gg0$.  
\end{itemize}
Assumption AS1 is necessary for the existence of positive steady states for the corresponding dynamical system \eqref{e.ode}.

\paragraph{Extent-of-reaction system} 
 
A different representation of the dynamics uses the concept of \textit{extent-of-reaction} $\xi$ \cite{clarke80}. Given a concentration trajectory $x(t)$ and an arbitrary non-negative vector $\xi_0$, we may define 
\[
\xi(t)\,:=\;\int_0^t R(x(\tau))~d\tau+ \xi_0 \,.
\]
From this definition, $\dot \xi(t)=R(x(t))$ and $\xi(0)=\xi_0$.
On the other hand,
since $x(t)$ is a solution of \eqref{e.ode}, we know that 
\begin{equation}
\label{rel_x_xi} 
x(t)\;=\; x(0) + \Gamma \int_0^t R(x(\tau))\,d\tau \;=\;
x(0) + \Gamma (\xi(t) - \xi(0)). 
\end{equation}  
Using the latter expression, and writing $x_0=x(0)$, we can write the dynamics of $\xi$ as:
\[
\dot \xi \;=\; R(x_0 + \Gamma (\xi-\xi_0) )\,, \;\xi(0)=\xi_0\,,
\]
and specializing to the particular case $\xi_0=0$ we have that
$x(t) = x_0 + \Gamma (\xi(t))$ and
\begin{equation}
\dot \xi = R(x_0 + \Gamma \xi ) \,,\;\xi(0)=0. \label{e.xi_ode0}
\end{equation}
Conversely, suppose that $x_0$ is arbitrary and that $\xi$ is any solution of Equation \eqref{e.xi_ode0}, so that
$\xi(t) = \int_0^t R(x_0 + \Gamma \xi(\tau) )\,d\tau$.
Then we may define
\[
x(t)\,:=\; 
x_0 +  \Gamma \xi(t)\,.
\]
With this definition,
\[
\dot x(t) \;=\; \Gamma \dot\xi(t)
\;=\;
\Gamma R(x_0 + \Gamma \xi({t}) )
\;=\;
\Gamma R(x({t})).
\]
In other words, $x(t)$ is a solution of
Equation \eqref{e.ode}
with initial condition $x(0) = x_0+ \Gamma \xi(0) = x_0$.

In summary, there is a one to one correspondence between solutions of \eqref{e.ode} and solutions of \eqref{e.xi_ode0}. 
 
However, it is inconvenient to parameterize the extent-of-reaction ODE in terms of the initial condition $x_0$ as in \eqref{e.xi_ode0}.
Next, we derive another form.  Instead of considering an initial condition $x_0$, let us consider the corresponding stoichiometric class $\Cs_{x_0}$. 
Pick any point $\bx \in \Cs_{x_0}$. (For example, $\bx$ might be a steady state.) For any such choice of $\bx$, there exists some $\xi_0$ such that $x_0 - \bx = \Gamma \xi_0$.  Then, we manipulate the expression to get $x(t)-x_0=x(t)-\bx+\bx-x_0= x(t)-\bx - \Gamma \xi_0$. Substituting in \eqref{rel_x_xi}, we get $x(t)=\bx+\Gamma\xi(t)$. Since $\dot\xi=R(x)$, we can write:
\begin{equation}
\label{e.xiode} 
\dot\xi = R(\bx + \Gamma \xi),\; \xi(0)=\xi_0.
\end{equation}
We will use the latter equation in the remainder of the paper.
The alternative ODE \eqref{e.xiode} has the advantage of transferring the 
dependence on $x(0)$ to the initial condition $\xi(0)$. Hence, any trajectory $x^*(t)$ that starts in $\Cs_\bx$ can be recovered as $\bx+\Gamma \xi^* (t)$, where $\xi^*(t)$ the solution of \eqref{e.xiode} for an appropriate choice of $\xi^*(0)$. Note that $\xi^*(0)$ is non-unique since any other initial condition $\xi^{**}(0)$ satisfying $\Gamma(\xi^*(0))=\Gamma(\xi^{**}(0))$ will give the same trajectory. Hence, the extent-of-reaction system is known as a \emph{translation-invariant} system \cite{angeli08}, and can be thought as evolving on $\mathbb R^\nu/\kerGamma$.  This will be discussed more formally in \S \ref{sec.quotient}.
 
\paragraph{Example} Let us now consider a network which represents a simplified post-translational modification cycle which is shown in Figure \ref{f.networks}(a) later on in the paper. (Note: in order to simplify the calculations, we assume that $\bf R_1$ and $\bf R_2$ are irreversible, but the method applies to the original network.) The reactions are as follows:
\begin{equation}\label{e.ptm} \begin{array}{rl}  S+ E \xrightarrow{\R_1} C_1 \xrightarrow{\R_2} P+E \\ P+D \xrightarrow{\R_3} C_2 \xrightarrow{\R_4} S+D \end{array}\end{equation}
We define the state vector as $x=[s,e,c_1,p,d,c_2]^T$. The corresponding ODEs can be written as:
\begin{equation}
\label{e.ptm.eqn}
\dot x=\frac{d}{dt}\begin{bmatrix}s\\e\\c_1\\p\\d\\ c_2 \end{bmatrix}=\left[\begin{array}{rrrr} -1 & 0 & 0 & 1 \\ -1 & 1 & 0 & 0 \\  1 & -1 & 0 & 0 \\ 0 & 1 & -1 & 0  \\ 0 & 0 & -1 & 1 \\ 0 & 0 & 1 & -1 \end{array}\right]\begin{bmatrix}R_1(s,e)\\R_2(c_1)\\R_3(p,d)\\R_4(c_2) \end{bmatrix} = \Gamma R(x);~x(0)=x_0.
\end{equation}
The network admits three conservation laws, which are $s+c_1+p+c_2=s_{tot}$, $e+c_1=e_{tot}$, $d+c_2=d_{tot}$. Hence, any given vector $\bx\in\mathbb R_{\ge0}^6$ determines $s_{tot},e_{tot},d_{tot}$ and thus fixes a stoichiometric class $\Cs_\bx$. Any such class is a three dimensional, polyhedral, and compact subset.
 	
Recall that $R$ satisfies conditions AK1-AK3. Hence, its Jacobian $\partial{R}/{\partial x}\in\mathscr K_\Ns$, where $\mathcal K_\Ns$ is set of all matrices with the following sign pattern over the positive orthant:
\[
\begin{bmatrix}  + & + & 0 & 0 & 0 &0 \\ 0 & 0 & + & 0 & 0 & 0 \\ 0  & 0 & 0 & + & + & 0 \\ 0 & 0 & 0 & 0 & 0 &+ \end{bmatrix}
\,.
\]
Fixing any $\bx \in \Cs_{x_0}$, the corresponding extent-of-reaction dynamics can be written as:
\[
\dot\xi = \left[
\begin{array}{l} R_1(\bx_1-\xi_1+\xi_4,\bx_2-\xi_1+\xi_2)\\ R_2(\bx_3+\xi_1-\xi_2) \\ R_3(\bx_4+\xi_2-\xi_3,\bx_5-\xi_3+\xi_4)\\R_4(\bx_6+\xi_3-\xi_4)  
\end{array}
\right], \, \xi(0)=\xi_0 \,.
\]
The initial condition $\xi_0$ satisfies $x_0-\bx=\Gamma \xi_0$ as explained earlier after Eq. \eqref{e.xiode}.%
 
\subsection{Graphical Lyapunov Functions (GLFs)}
 
\paragraph{Example (continued)}

Let us consider \eqref{e.ptm.eqn} with an admissible $R$. Using results from \cite{MA_cdc14,PWLRj,MA_LEARN}, it can be shown that the following function: $V(x)=\max\mathcal R-\min\mathcal R$ \cite{MA_LEARN}, where $\mathcal R=\{R_1(s,e),R_2(c_1),R_3(p,d),R_4(c_2)\}$ is positive-definite with respect to the set of steady states and non-increasing for any choice of $R$.  In addition, note that it can be written as $V=\tV \circ R$, where $\tV$ is given as:
 \begin{equation}
 \label{e.tV_ptm}%
 \tV(r)=\| C r \|_\infty \;=\;
 \left\| \left[ 
 \begin{array}{rrrr}
 -1 & 0 & 0 & 1\\ -1 & 1 & 0 & 0\\ 1 & 0 & -1 & 0\\ 0 & 1 & -1 & 0\\ 0 & 1 & 0 & -1\\ 0 & 0 & 1 & -1 
 \end{array}
 \right] 
 r \right\|_\infty
 \,.
 \end{equation}
 
 In the above example, the function $\tV$ depends only on the graphical nature of a given network. We will call such a function a robust Lyapunov function or a graphical Lyapunov function. In order to provide a formal definition of $\tV$, we examine the non-increasing property for $V=\tV \circ R$. This amounts to:
 \[(\partial \tV/{\partial r})({\partial R}/{\partial x}) \Gamma R(x) \;\le\; 0
 \]
 for all $x$.  Since we need $\tV$ to work for every admissible rate $R$, and not to depend on the specific $R$, we introduce the stronger condition
 $
 (\partial \tV/{\partial r})K \Gamma r \le 0
 $
 for all $r$ and for any admissible Jacobian matrix $K\in\mathcal K_\Ns$.  This motivates the following definition:
 
 \begin{definition}
 \label{def.GLF} 
 Let $\Ns=(\Ss,\Rs)$ be given. A locally Lipschitz function $\tV: \mathbb R^\nu \to \mathbb R_{\ge 0}$ is a \graphicalLF (GLF) for  $\Ns$ if
 $\tV$ is positive-definite with respect to the set of steady states, meaning that
\[
\tV (r) \ge0  \, \forall r\in\mathbb R_{\ge0}^{\nu} , \; \tV(r) = 0\mbox{ iff } \Gamma r=0;
 \]
and
\[
\forall r\in\mathbb R_{\ge0}^{\nu}, \forall K \in \mathcal K_\Ns, \;\frac{\partial{\tV}}{\partial r}(r) K\Gamma r \le 0 \,.
\]
\end{definition}
 
 \begin{remark}\label{remark.Dini} Since $V$ is not assumed to be continuously differentiable, the gradient above is defined in terms of the upper Dini's derivative as $\frac{\partial{\tV}}{\partial r}(r) = [D_{e_1}^+\tV(r),\dots,D_{e_\nu}^+\tV(r)] $, where $\{e_j\}_{j=1}^\nu$ are the unit vectors and  $D_{e_j}^+ \tV(r) := \limsup_{h\to0^+} \tfrac 1h (\tV(r+h e_j) - \tV(r)) .$
 \end{remark}
 
 In order to characterize GLFs,	we write the non-increasing condition using \eqref{e.dR} as follows:
 \begin{equation}
 \label{e.rode} 
 \frac{\partial \tV}{\partial r}K\Gamma r \;=\;  \frac{\partial \tV}{\partial r}  \sum_{\ell=1}^s \bar\rho_\ell E_{ j_\ell i_\ell} \Gamma r \;=\; \sum_{\ell=1}^s \bar\rho_\ell \overbrace{(e_{j_\ell}\gamma_{i_\ell}^T)}^{Q_\ell} r
 \;=\;  \sum_{\ell=1}^s \bar\rho_\ell Q_\ell r\le 0\,,
 \end{equation}
 where  $e_j$ is a member of the standard basis of $\mathbb R^\nu$, and $\gamma_i$ is the $i$th row of $\Gamma$. 
 Since the coefficients $\bar\rho_1,\dots,\bar\rho_s$ are arbitrary nonnegative numbers, this motivates the following definition:
 
 \begin{definition}
 Let $\tV: \mathbb R^\nu\to\mathbb R_{\ge0}$ be a locally-Lipschitz function, and let $Q_1,\dots,Q_s \in \mathbb R^{\nu\times\nu}$. Then, we say that $\tV$ is a common Lyapunov function for the set of linear systems
 \[
 {\dot r}=Q_1 r, {\ldots},\dot r=Q_s r
 \]
 if
 \[
 \kertV=\bigcap_{\ell=1}^s \kerQ_\ell
 \]
 and 
 \[
 (\partial \tV/\partial r)Q_\ell r\le0 \mbox{  for all }\ell=1,\dots,s \,.
 \]
  \end{definition}
 We had previously shown the following characterization \cite{MA_cdc14,MA_LEARN}:
 \begin{theorem} \label{t.comLF}A function $\tV$ is {a} GLF for a given network iff $\tV$ is a common Lyapunov function for the set of linear systems $\dot r = Q_1 r, \dots, \dot r = Q_s r$. 
 \end{theorem}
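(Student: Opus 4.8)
The plan is to check the ``iff'' clause by clause. Each of the two notions requires (i)~that $\tV$ be locally Lipschitz with values in $\mathbb R_{\ge0}$, (ii)~a \emph{kernel} condition, and (iii)~a \emph{descent} condition; I would show that (i) is identical in the two, that the kernel conditions are literally the same statement, and that the descent conditions are equivalent pointwise in $r$. In effect the proof just reverses the computation that led from Definition~\ref{def.GLF} to \eqref{e.rode}, the only extra ingredient being an elementary fact about linear functionals on cones.

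For the kernel conditions, I would start from the rank-one form $Q_\ell=e_{j_\ell}\gamma_{i_\ell}^T$, which gives $Q_\ell r=(\gamma_{i_\ell}^T r)\,e_{j_\ell}$, so $Q_\ell r=0$ iff the $i_\ell$-th coordinate of $\Gamma r$ vanishes. As $\ell$ runs over the reactant--reaction pairs $\mathcal P_\Ns$, the species index $i_\ell$ runs over exactly those species that are reactants of some reaction, so $\bigcap_{\ell=1}^s\kerQ_\ell=\{r:(\Gamma r)_i=0\text{ for every reactant species }i\}\supseteq\kerGamma$. For the reverse inclusion I would use AS1: if $X_i$ is a reactant of no reaction then $\alpha_{ij}=0$ for all $j$, so the $i$-th row of $\Gamma$ is componentwise nonnegative, and pairing it with a positive flux $v\gg0$ (for which $\gamma_i^T v=0$ while every $v_j>0$) forces that row to vanish. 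Hence the reactant-species rows of $\Gamma$ already cut out $\kerGamma$, so $\bigcap_\ell\kerQ_\ell=\kerGamma$, and ``$\tV(r)=0$ iff $\Gamma r=0$'' is word-for-word the condition ``$\kertV=\bigcap_\ell\kerQ_\ell$''.

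For the descent conditions, fix $r\in\mathbb R_{\ge0}^\nu$ and set $c_\ell:=(\partial\tV/\partial r)(r)\,Q_\ell r\in\mathbb R$, where $(\partial\tV/\partial r)(r)$ is the row vector of coordinate Dini derivatives (Remark~\ref{remark.Dini}). By \eqref{e.calK} every $K\in\mathcal K_\Ns$ has the form $\sum_{\ell=1}^s\bar\rho_\ell E_{j_\ell i_\ell}$ with all $\bar\rho_\ell>0$; since $(\partial\tV/\partial r)(r)$ is a fixed row vector, dotting it against $K\Gamma r=\sum_\ell\bar\rho_\ell Q_\ell r$ gives $(\partial\tV/\partial r)(r)\,K\Gamma r=\sum_{\ell=1}^s\bar\rho_\ell c_\ell$ (this is the identity underlying \eqref{e.rode}). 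So the GLF descent inequality at $r$ says exactly that the linear functional $\bar\rho\mapsto\sum_\ell\bar\rho_\ell c_\ell$ is nonpositive on the open cone $\mathbb R^s_{>0}$; since nonpositivity is preserved under limits, this passes to $\mathbb R^s_{\ge0}$, and evaluating on the standard basis of $\mathbb R^s$ yields $c_\ell\le0$ for all $\ell$ --- the common-Lyapunov descent inequality at $r$. The reverse implication at $r$ is immediate since the $\bar\rho_\ell$ are positive. Quantifying this over all $r\in\mathbb R_{\ge0}^\nu$ and combining with the kernel part finishes both directions of the equivalence.

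I expect the only step carrying genuine content to be the reverse inclusion $\kerGamma\supseteq\bigcap_\ell\kerQ_\ell$ in the kernel part, i.e.\ that restricting to reactant-species rows loses no information about $\kerGamma$: this is where AS1 (or some hypothesis excluding species that are never consumed) is essential, and it is the one place where the network structure does any work. Everything else is bookkeeping together with the one-line observation that a linear functional that is nonpositive on an open convex cone is nonpositive on the cone's closure, hence on its generators.
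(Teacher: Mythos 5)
Your proposal is correct, and it follows essentially the same route as the paper: the paper states Theorem~\ref{t.comLF} without proof (citing prior work), but the derivation in \eqref{e.rode} is exactly your descent-condition equivalence --- identifying $(\partial\tV/\partial r)K\Gamma r$ with $\sum_\ell\bar\rho_\ell\,(\partial\tV/\partial r)Q_\ell r$ and using that the $\bar\rho_\ell$ range over an arbitrary positive cone. Your completion of the kernel identity $\bigcap_\ell\kerQ_\ell=\kerGamma$ via AS1 (non-reactant rows of $\Gamma$ are nonnegative and orthogonal to a positive flux, hence zero) is a correct filling-in of the one detail the paper leaves implicit.
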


 \mybox{\paragraph{Example (continued)}  We are still considering the network \eqref{e.ptm}. Note that we can write the following:
 	\begin{align*}\frac{\partial R}{\partial x}\Gamma&= {%
    \begin{bmatrix}\rho_1 & 0 & 0 & 0 & \rho_2 & 0 \\ 0 & 0 & \rho_3 & 0 & 0 & 0 \\
 				0 & \rho_4 & 0 & 0 & 0 & \rho_5 \\ 0 & 0 &0 	& \rho_6 & 0 & 0 \end{bmatrix}}{%
                \left[\begin{array}{rrrr} -1 & 0 & 0 & 1 \\ 0 & 1 & -1 & 0 \\ 1 & -1 & 0 & 0 \\ 0 & 0 & 1 & -1 \\ -1 & 1 & 0 & 0 \\ 0 & 0 & -1& 1 \end{array}\right]} \\  &= {%
                \rho_1\begin{bmatrix} -1 & 0 &0 & 1 \\ 0 & 0 & 0 & 0 \\ 0 & 0 & 0 & 0 \\  0 & 0 & 0 & 0 \end{bmatrix}+\rho_2 \begin{bmatrix} -1 & 1 & 0 & 0 \\ 0 & 0 & 0 & 0 \\ 0 & 0 & 0 & 0 \\  0 & 0 & 0 & 0 \end{bmatrix} + \rho_3 \begin{bmatrix} 0 & 0 & 0 & 0 \\ 1 & -1 & 0 & 0 \\ 0 & 0 & 0 & 0 \\  0 & 0 & 0 & 0 \end{bmatrix} } \\ & { %
                \quad + \; \rho_4 \begin{bmatrix} 0 & 0 & 0 & 0 \\ 0 & 0 & 0 & 0 \\ 0 & 1 & -1 & 0 \\  0 & 0 & 0 & 0 \end{bmatrix} +\rho_5 \begin{bmatrix} 0 & 0 & 0 & 0 \\ 0 & 0 & 0 & 0 \\  0 & 0 & -1 & 1  \\  0 & 0 & 0 & 0 \end{bmatrix}  +\rho_6  \begin{bmatrix} 0 & 0 & 0 & 0 \\ 0 & 0 & 0 & 0 \\    0 & 0 & 0 & 0 \\ 0 & 0 & 1 & -1  \end{bmatrix} } \\
 		&{%
= \; \sum_{\ell=1}^6 \rho_\ell Q_\ell }  
\end{align*}
It can be verified that the function $\tV$ defined in \eqref{e.tV_ptm} is indeed a common Lyapunov function for the set of rank-one linear systems $\dot r=Q_\ell r, \ell=1,\dots,6$.  %

\paragraph{Concentration dynamics} 

The next result shows that the GLF will give us a concrete Lyapunov function for each given $R$.
\begin{proposition}[\cite{MA_cdc14,MA_LEARN}] 
\label{lyap_dec}
Consider a network $\Ns=(\Ss,\Rs)$, and consider \eqref{e.ode} with a fixed admissible $R$. Assume that $\tV$ is a GLF for $\Ns$. Then, $V=\tV\circ R$ is Lyapunov function for \eqref{e.ode}, i.e., it is positive definite with respect to the set of steady states, and $\dot V(x)\le0$ for all $x$.
\end{proposition}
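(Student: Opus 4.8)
The plan is to establish the two assertions—positive-definiteness with respect to the steady-state set, and the decay inequality—separately, with the second reduced to the defining property of a GLF through a non-smooth chain rule. I would dispatch positive-definiteness first, since it is immediate: reaction rates are non-negative, so $R(x)\in\mathbb R_{\ge0}^{\nu}$ for every $x\in\mathbb R_{\ge0}^{n}$, and Definition~\ref{def.GLF} then gives $V(x)=\tV(R(x))\ge0$, with $V(x)=0$ iff $\tV(R(x))=0$ iff $\Gamma R(x)=0$ iff $x$ is a steady state of \eqref{e.ode}. Hence $V$ is positive-definite relative to the set of steady states.

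The substance is the estimate $\dot V(x)\le0$, and the idea is to work pointwise, directly from the Dini-derivative definition \eqref{dini}. Fix $x\in\mathbb R_{\ge0}^{n}$ and set $r:=R(x)$ and $K:=(\partial R/\partial x)(x)$. By AK1 the map $R$ is $\mathscr C^{1}$, so $R\bigl(x+h\Gamma R(x)\bigr)=r+hK\Gamma r+o(h)$ as $h\to0^{+}$; since $\tV$ is locally Lipschitz, replacing the argument by its first-order part perturbs $\tV$ by $o(h)$, and therefore
\[
\dot V(x)=\limsup_{h\to0^{+}}\frac{\tV\bigl(R(x+h\Gamma R(x))\bigr)-\tV(r)}{h}=\limsup_{h\to0^{+}}\frac{\tV\bigl(r+hK\Gamma r\bigr)-\tV(r)}{h}=\frac{\partial\tV}{\partial r}(r)\,K\Gamma r,
\]
where the last expression is understood as the one-sided directional derivative of $\tV$ at $r$ along $K\Gamma r$, i.e.\ exactly the quantity appearing in the second clause of Definition~\ref{def.GLF} and in \eqref{e.rode}, read through the convention of Remark~\ref{remark.Dini}.

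It then remains to see that this quantity is $\le0$. For $x\in\mathbb R_{+}^{n}$ one has $K\in\mathcal K_\Ns$ by AK3 and $r\in\mathbb R_{\ge0}^{\nu}$, so Definition~\ref{def.GLF} applies verbatim and yields $\dot V(x)\le0$. For $x$ on the boundary of $\mathbb R_{\ge0}^{n}$ one only knows $K\in\overline{\mathcal K_\Ns}$, so I would close the argument either by a density step—take $K_{m}\in\mathcal K_\Ns$ with $K_{m}\to K$, apply the GLF inequality to each, and pass to the limit—or, equivalently, by using the decomposition \eqref{e.dR}, $K=\sum_{\ell}\rho_{\ell}(x)Q_{\ell}$ with $\rho_{\ell}(x)\ge0$, together with the common-Lyapunov characterization of Theorem~\ref{t.comLF}. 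I expect this last point to be the only delicate one: both routes require that the direction map $v\mapsto(\partial\tV/\partial r)(r)\,v$ be well behaved—continuity in $v$, or sublinearity so that $(\partial\tV/\partial r)(r)\sum_{\ell}\rho_{\ell}Q_{\ell}r\le\sum_{\ell}\rho_{\ell}\,(\partial\tV/\partial r)(r)Q_{\ell}r\le0$—which does hold for the convex, piecewise-linear $\tV$ of interest (in particular $\tV(r)=\|Cr\|_\infty$), but would otherwise need an explicit regularity hypothesis on $\tV$. Granting that, the three steps combine to give $\dot V(x)\le0$ for all $x\in\mathbb R_{\ge0}^{n}$.
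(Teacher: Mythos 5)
Your argument is essentially correct, and it is worth noting that the paper does not actually prove Proposition~\ref{lyap_dec} in the text --- it is imported from \cite{MA_cdc14,MA_LEARN} --- but it does prove the closely analogous extent-of-reaction statement (Proposition~\ref{xi_GLF}), and the route taken there differs from yours. The paper's argument works at points where the gradient of the locally Lipschitz $\tV$ genuinely exists (Rademacher), establishes $\dot\tV\le 0$ there via the mean value theorem and the decomposition $\partial R/\partial x=\sum_\ell\rho_\ell E_{j_\ell i_\ell}$, and then upgrades the almost-everywhere inequality to an everywhere inequality in the Dini sense by invoking Lemma~9 of \cite{MA_TAC22}. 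You instead compute the Dini derivative pointwise at every $x$, reduce it to the one-sided directional derivative of $\tV$ at $r=R(x)$ along $K\Gamma r$, and then appeal to sublinearity of directional derivatives. The step you correctly flag as delicate is the real divergence: the quantity $\frac{\partial\tV}{\partial r}(r)K\Gamma r$ in Definition~\ref{def.GLF} is, per Remark~\ref{remark.Dini}, a formal dot product of the vector of \emph{coordinate} Dini derivatives with $K\Gamma r$, and for a general locally Lipschitz $\tV$ this need not coincide with (or even dominate) the directional Dini derivative of $\tV$ along $K\Gamma r$ when that direction has negative components. Your identification is therefore only justified for the convex piecewise-linear GLFs ($\tV(r)=\|Cr\|_\infty$) that the paper actually uses, whereas the paper's a.e.-plus-extension argument is exactly the device that closes this for the general locally Lipschitz case. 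Since you state the needed regularity explicitly, this is an honest restriction rather than a gap; your route is more elementary and self-contained on the convex class, while the paper's buys the statement at the stated level of generality. The positive-definiteness part and the boundary treatment via density or the decomposition \eqref{e.dR} are both fine.
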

 
\subsection{The interplay between concentration and extent-of-reaction dynamical systems}
 
\subsubsection{Concentration dynamics as a quotient of extent-of-reaction dynamics} \label{sec.quotient}

The extent-of-reaction system has some peculiarities. Considering \eqref{e.xiode}, note that $\xi(0)=\xi^*+v$ will give rise to the same exact trajectory for any $v\in\kerGamma$. Moreover, when the system \eqref{e.ode} is at a steady-state, the system \eqref{e.xiode} exhibits an unbounded trajectory which is linear in time.  Therefore, in this subsection, we study  the relationship between concentration dynamics \eqref{e.ode} and extent-of-reaction dynamics \eqref{e.xiode} more formally.  Before proceeding, we restrict the class of functions $\tV$ under consideration using the following definition:
\begin{definition} 
A function $\tV:\mathbb R^\nu\to \mathbb R_{\ge0}$ is called  \emph{$\Gamma$-translation-invariant} if it satisfies $\tV(\xi)=\tV(\xi^*)$ whenever $\xi-\xi^* \in \kerGamma$.
\end{definition}
 
Fix a vector $\bx\in\mathbb R_{+}^n$.	Consider a system $\dot\xi=g(\xi)=R(\bx+\Gamma {\xi})$, $\xi\in\mathbb R^\nu$, and let $\psi_t:\mathbb R^\nu\to\mathbb R^\nu$ be the associated flow.
Similarly, consider the system $\dot x=f(x)=\Gamma R(x)$, $\xi\in\Cs_\bx:=\bx+\Imgamma$, and let $\varphi_t:\Cs_\bx\to\Cs_\bx$ be the associated flow.  Define the following mapping $\Phi:\mathbb R^\nu \to\Cs_\bx$:
\begin{equation}\label{Phi} 
\Phi(\xi):=\bx+\Gamma \xi. 
\end{equation}
Using \eqref{rel_x_xi}, we have $\Phi\circ\psi_t=\varphi_t$ for all $t\ge 0$. We are interested now in finding an inverse $\Psi:\Cs_\bx \to \mathbb R^\nu$. To that end, let us think of the stoichiometry matrix $\Gamma$ as a surjective linear operator $\Gamma:\mathbb R^\nu\to\Imgamma$. Hence, there exists another linear operator $G:\Imgamma\to\mathbb R^\nu$ which satisfies $\Gamma G(z)=z$ for all $z\in\Imgamma$.   %
In order to make $G$ into a bijection, consider the quotient space $\mathbb R^\nu/\kerGamma$, hence $\Gamma: \mathbb R^\nu/\kerGamma \to \Imgamma$ is a linear bijection, and let $G$ be its linear inverse.  Therefore $\Gamma \circ G = \mathrm{id}_{\Imgamma}$ and $G \circ \Gamma = \mathrm{id}_{\mathbb R^\nu/\kerGamma}$, where $\mathrm{id}_W$ denotes the identity operator on the vector space $W$.  Hence, we can write $\Psi:\Cs_\bx\to\mathbb R^\nu/\kerGamma$ as 
\begin{equation} \label{Psi}
\Psi(x)=G(x-\bx){.}
\end{equation} 
 
We can summarize our construction by saying that we have a \textit{bisimulation} relation, in the sense of \cite{schaft04,haghverdi05}, between the concentration and the extent-of-reaction dynamical systems as follows:
\begin{theorem}\label{bisimulation}
Let $\Ns=(\Ss,\Rs)$ be a network, and fix $\bx\in\mathbb R_{+}^n$.  Denote $\Cs_\bx:=\bx+\Imgamma$. Let $\varphi_t,\psi_t$ be the flows corresponding to \eqref{e.ode},\eqref{e.xiode}, respectively. Let $\Phi$ be defined as in \eqref{Phi}, and let $\Psi$ be given as in \eqref{Psi}.
Let $\tV:\mathbb R^\nu\to \mathbb R_{\ge0}$ be a locally Lipschitz function and  {$\Gamma$-translation-invariant}. Then, the following diagram commutes:
\begin{center}
\includegraphics[width=0.3\linewidth]{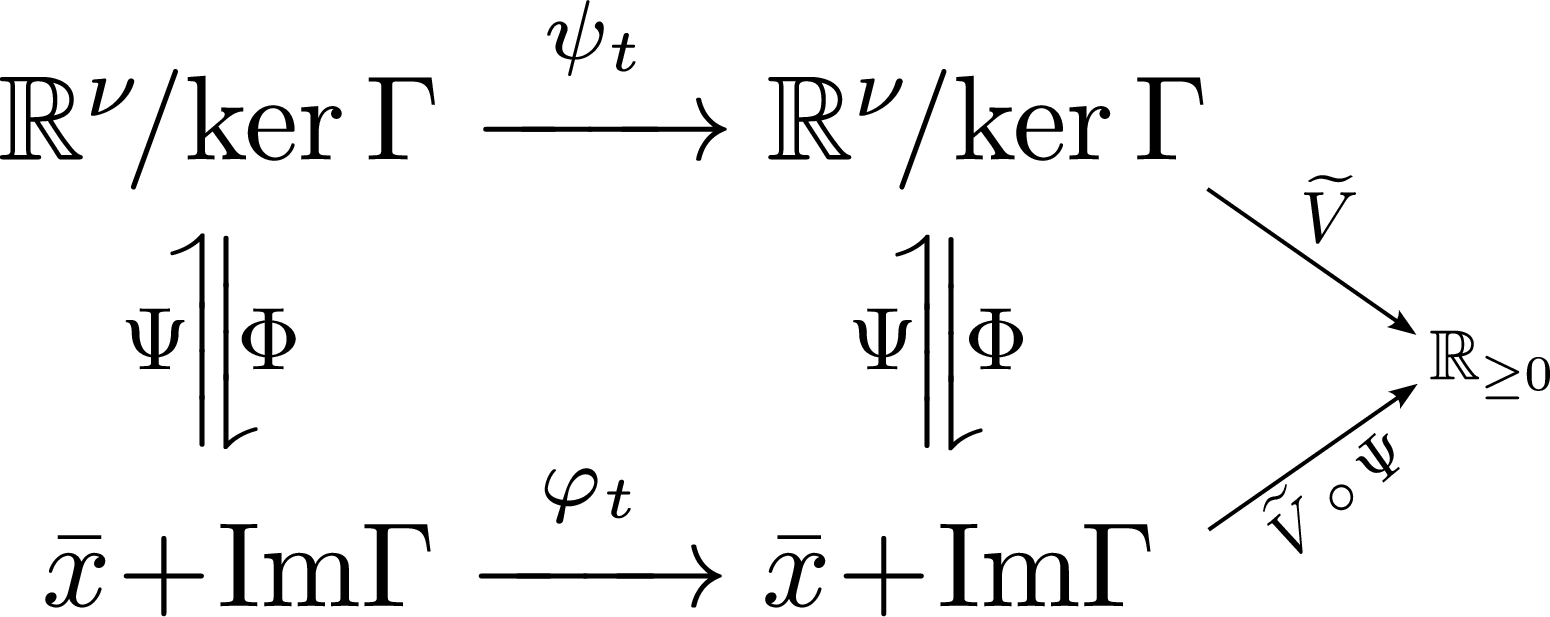}.
\end{center}
\end{theorem}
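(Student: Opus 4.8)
The plan is to verify commutativity one edge at a time, reducing the dynamical content to uniqueness of solutions and the structural content to the single linear-algebraic fact that $\Gamma$ induces a bijection $\bar\Gamma:\mathbb R^\nu/\kerGamma\to\Imgamma$. I will use the canonical projection $\pi:\mathbb R^\nu\to\mathbb R^\nu/\kerGamma$ throughout.

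First I would record three reductions. \emph{(a) The extent-of-reaction dynamics descends to the quotient.} The vector field $g(\xi)=R(\bx+\Gamma\xi)$ depends on $\xi$ only through $\Gamma\xi$, so for $v\in\kerGamma$ the curve $t\mapsto\psi_t(\xi_0)+v$ also solves $\dot\xi=g(\xi)$; by uniqueness (valid since $R$ is $\mathscr C^1$ by AK1) $\psi_t(\xi_0+v)=\psi_t(\xi_0)+v$, hence $\psi_t$ descends to a well-defined flow $\bar\psi_t$ on $\mathbb R^\nu/\kerGamma$ with $\pi\circ\psi_t=\bar\psi_t\circ\pi$. \emph{(b) $\Phi$ factors through a bijection whose inverse is $\Psi$.} The map $\Phi(\xi)=\bx+\Gamma\xi$ is constant on cosets of $\kerGamma$, so $\Phi=\bar\Phi\circ\pi$ with $\bar\Phi:\mathbb R^\nu/\kerGamma\to\Cs_\bx$, $[\xi]\mapsto\bx+\Gamma\xi$; since $\bar\Gamma$ is a bijection onto $\Imgamma$ and $\Cs_\bx=\bx+\Imgamma$, $\bar\Phi$ is a bijection, and the relations $\Gamma G=\mathrm{id}_{\Imgamma}$, $G\Gamma=\mathrm{id}_{\mathbb R^\nu/\kerGamma}$ give $\Psi(\bx+\Gamma\xi)=G(\Gamma\xi)=[\xi]$, i.e. $\Psi=\bar\Phi^{-1}$. \emph{(c) $\tV$ factors through $\Phi$.} $\Gamma$-translation-invariance says exactly that $\tV$ is constant on cosets of $\kerGamma$; hence $\hV(x):=\tV(\xi)$ for any $\xi$ with $\bx+\Gamma\xi=x$ is a well-defined function $\hV:\Cs_\bx\to\mathbb R_{\ge0}$ with $\hV\circ\Phi=\tV$, and it is locally Lipschitz since $\hV(x)=\tV(\tilde G(x-\bx))$ for any fixed linear right inverse $\tilde G:\Imgamma\to\mathbb R^\nu$ of $\Gamma$.

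Next I would prove the dynamical semiconjugacy. Given $\xi_0$ with $\bx+\Gamma\xi_0\in\mathbb R_{\ge0}^n$, put $\xi(t)=\psi_t(\xi_0)$ and $x(t)=\Phi(\xi(t))=\bx+\Gamma\xi(t)$; then $\dot x=\Gamma\dot\xi=\Gamma R(\bx+\Gamma\xi)=\Gamma R(x)$ and $x(0)=\bx+\Gamma\xi_0\in\Cs_\bx$, so by uniqueness $x(t)=\varphi_t(x(0))=\varphi_t(\Phi(\xi_0))$. This is precisely the identity $\Phi\circ\psi_t=\varphi_t\circ\Phi$ used informally before the statement, and it relies on the forward invariance of $\Cs_\bx$ and of $\mathbb R_{\ge0}^n$ recorded earlier, so that $R$ is only ever evaluated on its domain and the semiflows make sense. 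Conversely, any solution of \eqref{e.ode} through $x_0\in\Cs_\bx$ is $\Phi$ of a solution of \eqref{e.xiode}: choose $\xi_0$ with $\Gamma\xi_0=x_0-\bx$ and compare via uniqueness. Applying $\pi$ to $\Phi\circ\psi_t=\varphi_t\circ\Phi$ and using (a) and (b) yields $\varphi_t\circ\bar\Phi=\bar\Phi\circ\bar\psi_t$, equivalently $\Psi\circ\varphi_t=\bar\psi_t\circ\Psi$; thus the square built from $\varphi_t,\bar\psi_t,\bar\Phi,\Psi$ commutes, and by the surjectivity just noted it does so for the full correspondence of trajectories, not merely the forward one.

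Finally I would paste the pieces together: combining $\hV\circ\Phi=\tV$ from (c) with the commuting square gives, whenever $x=\Phi(\xi)$, that $\hV(\varphi_t(x))=\hV(\Phi(\psi_t(\xi)))=\tV(\psi_t(\xi))$, so every edge of the displayed diagram commutes. I do not expect a genuine obstacle: the argument is organizational rather than deep, and the only points that need care are (i) making precise the domains on which the three (semi)flows $\psi_t,\bar\psi_t,\varphi_t$ are defined and complete for $t\ge0$ --- which rests on the positivity of \eqref{e.ode} and the forward invariance of $\Cs_\bx$, with $\psi_t$ understood on the forward-invariant set $\{\xi:\bx+\Gamma\xi\in\mathbb R_{\ge0}^n\}$ where $g$ is defined --- and (ii) justifying each ``descends to the quotient'' step, which in every instance follows from the one observation that the object in question ($g$, $\Phi$, or $\tV$) is constant along $\kerGamma$.
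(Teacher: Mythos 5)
Your proposal is correct and follows essentially the same route as the paper, which presents Theorem \ref{bisimulation} as a summary of the preceding construction: the one-to-one correspondence between solutions of \eqref{e.ode} and \eqref{e.xiode} established via uniqueness, the bijection $\Gamma:\mathbb R^\nu/\kerGamma\to\Imgamma$ with inverse $G$ defining $\Psi$, and $\Gamma$-translation-invariance of $\tV$ for the last edge. Your write-up merely makes explicit the quotient flow $\bar\psi_t$ and the edge-by-edge verification that the paper leaves implicit, so there is no substantive difference.
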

 
We introduce the concept of dual GLF:

\begin{definition}\label{def.hV_}
Given a network $\Ns$ and a $\Gamma${-translation}-invariant GLF $\tV$. Let $G:\Imgamma\to\mathbb R^\nu\!/\!\kerGamma$ be as defined above. Then, $\hV:=\tV\circ G$ is called a dual GLF.
\end{definition}
 
Using Theorem \ref{bisimulation}, we will shortly show in \S\ref{sec.dualGLF} that $(\tV\circ\Psi)(x)=\hV(x-\bx)$ is also a Lyapunov function on $\Cs_\bx$.
However, we will first establish that $\tV$ {is} a GLF for the extent-of-reaction dynamics, in a sense analogous to GLF's defined earlier.
 
\subsubsection{A GLF for the extent-of-reaction dynamics} 

We next define what it means for an extent-of-reaction system to admit a Lyapunov function.

\begin{definition}\label{def.xi_lyap}
Consider the extent-of-reaction system \eqref{e.xiode} $\dot\xi=R(\bx+\Gamma\xi)$. Then, a locally-Lipschitz function $\tV:\mathbb R^\nu\to\mathbb R_{\ge0}$ is said to be a Lyapunov function for \eqref{e.xiode} if:
\begin{enumerate}
  \item 
  $\tV(\xi)\ge0$ for all $\xi$,
  \item 
  $\tV(0)=0$,
  \item 
  $\tV$ is $\Gamma$-translation-invariant, and
  \item
  $\dot\tV(\xi)\le0$ for all $\xi$. 
\end{enumerate}
\end{definition}

As might be expected, we show that a GLF gives us a Lyapunov function for the extent-of-reaction system as stated below:

\begin{proposition}
\label{xi_GLF}
Consider a network $\Ns=(\Ss,\Rs)$,  and let $R$ be an admissible rate. Consider the extent-of-reaction system \eqref{e.xiode} $\dot\xi=R(\bx+\Gamma\xi)$. Assume that $\bx$ is a steady state for \eqref{e.ode}. If   $\tV$ is a $\Gamma$-translation{-}invariant GLF for $\Ns$, then $\tV$ {is} a Lyapunov function for the extent-of-reaction system \eqref{e.xiode}. 
\end{proposition}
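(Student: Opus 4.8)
The plan is to check the four conditions of Definition~\ref{def.xi_lyap} one at a time; three are essentially free and the fourth carries all the work. Condition~1 holds because $\tV$ is assumed to take values in $\mathbb R_{\ge0}$, and condition~3 ($\Gamma$-translation-invariance) is part of the hypothesis. For condition~2, since $\Gamma\cdot 0=0$, the positive-definiteness clause in the definition of a GLF (Definition~\ref{def.GLF}) forces $\tV(0)=0$. It remains to establish condition~4, namely $\dot\tV(\xi)\le 0$ along $\dot\xi=R(\bx+\Gamma\xi)$, where $\dot\tV$ is the upper Dini derivative \eqref{dini} adapted to \eqref{e.xiode}.

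For condition~4 the idea is to convert the extent-of-reaction increment into a standard ``GLF increment''. Since $\tV$ is locally Lipschitz, only the first-order term of the flow matters, so
\[
\dot\tV(\xi)\;=\;\limsup_{h\to0^+}\frac{\tV\big(\xi+h\,R(\bx+\Gamma\xi)\big)-\tV(\xi)}{h}.
\]
Now use that $\bx$ is a steady state, i.e.\ $\Gamma R(\bx)=0$. By the integral form of the mean value theorem,
\[
R(\bx+\Gamma\xi)\;=\;R(\bx)+\bar K\,\Gamma\xi,\qquad \bar K:=\int_0^1\frac{\partial R}{\partial x}(\bx+s\Gamma\xi)\,ds,
\]
and $\bar K\in\overline{\mathcal K_\Ns}$ because $\overline{\mathcal K_\Ns}$ is a convex cone (it is cut out by a sign pattern on the entries) and $\partial R/\partial x\in\overline{\mathcal K_\Ns}$ by AK1--AK3. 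Since $R(\bx)\in\kerGamma$ and $\tV$ is $\Gamma$-translation-invariant, the term $h\,R(\bx)$ may be deleted from the argument of $\tV$ without changing its value, whence
\[
\dot\tV(\xi)\;=\;\limsup_{h\to0^+}\frac{\tV\big(\xi+h\,\bar K\,\Gamma\xi\big)-\tV(\xi)}{h},
\]
which is exactly the quantity the GLF inequality controls. One then applies the defining inequality of a GLF (Definition~\ref{def.GLF}, equivalently the common-Lyapunov-function characterization of Theorem~\ref{t.comLF}) with $K=\bar K$; if one prefers to invoke that inequality only over the nonnegative orthant where it is literally stated, one may first replace $\xi$ by a nonnegative representative $\xi'$ with $\Gamma\xi'=\Gamma\xi$ — such $\xi'$ exists by AS1, since a positive flux makes $\Gamma(\mathbb R^\nu_{\ge0})=\Imgamma$ — and $\Gamma$-translation-invariance together with $\bar K\Gamma\xi=\bar K\Gamma\xi'$ makes the two difference quotients coincide. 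Either way, $\dot\tV(\xi)\le 0$.

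The step I expect to be delicate is the last one, namely the non-smooth bookkeeping: the GLF inequality in Definition~\ref{def.GLF} is phrased through the Dini gradient $\partial\tV/\partial r$, the row of componentwise Dini derivatives $D^+_{e_j}\tV$, whereas what appears along the flow of \eqref{e.xiode} is the single one-sided directional Dini derivative of $\tV$ in the direction $\bar K\Gamma\xi$, and for a generic locally Lipschitz function the former need not dominate the latter. For the polyhedral (hence convex, positively homogeneous) GLFs $\tV(r)=\|Cr\|_\infty$ around which the theory is built, this matching is standard and is precisely the computation underlying Proposition~\ref{lyap_dec}; I would carry it out in the conventions of Section~2 and of \cite{PWLRj,MA_LEARN} rather than redo it from scratch. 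A minor secondary point is simply to read ``for all $\xi$'' in Definition~\ref{def.xi_lyap}(4) over the natural domain $\{\xi:\bx+\Gamma\xi\in\mathbb R^n_{\ge0}\}$ of \eqref{e.xiode}, which is exactly where the nonnegative-representative reduction via AS1 is invoked.
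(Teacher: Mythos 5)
Your proof is correct and follows the same overall route as the paper's: expand $R(\bx+\Gamma\xi)$ around the steady state via the mean value theorem, use the steady-state hypothesis to discard the constant term $R(\bx)$, and reduce to the defining inequality of a GLF. Two execution details differ. First, you eliminate $hR(\bx)$ by noting $R(\bx)\in\kerGamma$ and applying $\Gamma$-translation-invariance of $\tV$ directly inside the difference quotient; the paper instead factors the gradient as $\partial\tV/\partial\xi=(\partial\hV/\partial z)\Gamma$ and uses $\Gamma R(\bx)=0$, which requires the gradient to exist at the point in question. Your version is cleaner, works pointwise for the Dini derivative, and your integral form of the mean value theorem (with the observation that $\bar K\in\overline{\mathcal K_\Ns}$ by convexity of the cone) is the correct statement for the vector-valued $R$. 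Second, the step you defer --- matching the componentwise Dini gradient of Definition \ref{def.GLF} against the single directional Dini derivative along the flow --- is precisely where the paper does its remaining work: it computes with the true gradient at the almost-every point where it exists (Rademacher) and then upgrades ``almost everywhere'' to ``everywhere'' via Lemma 9 of \cite{MA_TAC22}. Since the proposition is stated for general locally Lipschitz $\Gamma$-translation-invariant GLFs, not only polyhedral ones, you should invoke that lemma rather than the $\ell_\infty$-specific computation you gesture at. Finally, your use of AS1 to replace $\xi$ by a nonnegative representative addresses a genuine domain restriction in Definition \ref{def.GLF}; the paper sidesteps it by working with the common-Lyapunov-function characterization (Theorem \ref{t.comLF}), whose inequality is quantified over all of $\mathbb R^\nu$.
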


\begin{proof}

We show the four statements given in Definition \ref{def.xi_lyap}. The first and second statements follow immediately from the corresponding properties of $\tV$ given in {Definition} \ref{def.GLF}. The third statement is assumed in the statement of the proposition. We prove the fourth statement next.

Since $\tV$ is a locally Lipschitz function, its gradient exists almost everywhere. At the points where it is defined, one write{s} $\dot \tV(\xi)$ as follows:
\[
\dot\tV(\xi)=\frac{\partial \tV}{\partial \xi} R(\bx+\Gamma \xi) \mathop{=}\limits^{(\star)} \frac{\partial \tV}{\partial \xi}  \left (R(\bx) + \frac{\partial R}{\partial x}(x_\xi^*)\Gamma \xi \right )\mathop{=}\limits^{(\star\star)}  \frac{\partial \tV}{\partial \xi}   \frac{\partial R}{\partial x}(x_\xi^*)\Gamma \xi  , 
\]
where $x_\xi^*:=\bx+\ve \Gamma \xi$ for some $\ve\in(0,1)$. The equality $(\star)$ above follows from applying the Mean-Value Theorem to the single-valued function $h(\ve)=R(\bx+\ve\Gamma \xi)$. 
The equality $(\star\star)$ can be shown as follows.
Since $\tV$ is $\Gamma$-translation-invariant, pick a $\hV$ such that $\hV=\tV \circ G$ (see Theorem \ref{bisimulation} and Definition \ref{def.hV_}). Hence, we can write $\tV=\hV \circ \Gamma$. Therefore, we can write $\partial \tV/\partial \xi=(\partial \hV/\partial z) \Gamma$, but since $\bx$ is a steady state, then $\Gamma R(\bx)=0$, and hence the first term vanishes. Similar to \eqref{e.decomp}, we can decompose the Jacobian as follows:
\[
\dot\tV(\xi) \;=\;  \sum_{\ell=1}^s \rho_\ell(x_\xi^*) \frac{\partial \tV}{\partial \xi}  Q_\ell \xi \;\le\; 0 \,,
\]
where the last inequality follows since $\tV$ is a common Lyapunov function for the systems ${\dot r}=Q_1 r,\dots,\dot r=Q_s r$.  Since $\tV$ is locally Lipschitz, the inequality holds for almost all $\xi$ using Radmacher's theorem. Next, we interpret the gradient above in the sense of the upper Dini's derivative (see Remark \ref{remark.Dini}). Then we may use Lemma 9 in \cite{MA_TAC22}, which says that if the inequality above holds almost everywhere, then it holds for all $\xi\ge 0$. This completes the proof.
\end{proof}
 
\subsubsection{The dual GLF for concentration dynamics}\
\label{sec.dualGLF}

{As we will see in section 3, nonexpansivity of the trajectories in concentration coordinates can be understood with respect to the dual GLF defined on concentration coordinates, which we will explain next.} Consider a network $\Ns$ with a steady state $\bx$. Let $V=\tV \circ R$ be defined as in Proposition \ref{lyap_dec}. Then, we claim that $V^*(x)=\hV(x-\bx)$ is a Lyapunov function for \eqref{e.ode} whenever $x(0)\in\Cs_\bx$. Positive definiteness is evident. We can verify the non-increasing property of $\tV$ by writing:
\[
\frac{d}{dt}V^*(\varphi_t(x))=\frac{d}{dt}\hV(\varphi_t(x)-\bx)=\frac{d}{dt}\tV(\psi_t(\Psi(x))\le 0,
\]
where the last inequality follows from Proposition \ref{xi_GLF}. 
 
Immediate consequences of the discussion above are as follows:

\begin{theorem}
\label{th.equiv}
Let $\Ns$ be given, and let $G:\Imgamma\to\mathbb R^\nu/\kerGamma$ be as defined in \S\ref{sec.quotient}. Then: 
\begin{enumerate}
 \item
 If $\tV$ is a $\Gamma$-translation-invariant GLF for $\Ns$, then  $\hV:=\tV \circ G$ is a dual GLF for $\Ns$.
 \item
 If $\hV$ is a dual GLF for $\Ns$, then 
 $\tV:=\hV\circ\Gamma$ is a $\Gamma$-translation-invariant GLF for $\Ns$.
\end{enumerate}
\end{theorem}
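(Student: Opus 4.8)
The plan is to derive both implications purely from Definition~\ref{def.hV_}, together with the two identities $\Gamma \circ G = \mathrm{id}_{\Imgamma}$ and $G \circ \Gamma = \mathrm{id}_{\mathbb R^\nu/\kerGamma}$ established in \S\ref{sec.quotient}; no analytic estimate should be needed beyond the observation that composing with a fixed linear map preserves local Lipschitzness and nonnegativity.

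For item~1, let $\tV$ be a $\Gamma$-translation-invariant GLF. Since $\tV$ is constant on cosets of $\kerGamma$, it induces a well-defined function on $\mathbb R^\nu/\kerGamma$, so $\hV := \tV \circ G$ is an unambiguous, nonnegative, locally Lipschitz function on $\Imgamma$; by Definition~\ref{def.hV_} this is exactly what it means for $\hV$ to be a dual GLF. I would additionally record that the correspondence inverts $\tV$: for every $r \in \mathbb R^\nu$,
\[
\hV(\Gamma r) = \tV\bigl(G(\Gamma r)\bigr) = \tV(\overline r) = \tV(r),
\]
where $\overline r$ is the coset of $r$, the second equality using $G\circ\Gamma = \mathrm{id}_{\mathbb R^\nu/\kerGamma}$ and the third using $\Gamma$-translation-invariance; hence $\hV\circ\Gamma = \tV$. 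If positive-definiteness of $\hV$ on $\Imgamma$ is also wanted, it follows from $\hV(z)=0 \iff \tV(Gz)=0 \iff \Gamma G z = 0 \iff z = 0$, using $\Gamma G = \mathrm{id}_{\Imgamma}$ and the positive-definiteness clause of Definition~\ref{def.GLF}.

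For item~2, let $\hV$ be a dual GLF; by Definition~\ref{def.hV_} this means $\hV = \tV_0 \circ G$ for some $\Gamma$-translation-invariant GLF $\tV_0$. Then
\[
\tV := \hV \circ \Gamma = \tV_0 \circ (G\circ\Gamma) = \tV_0 \circ \mathrm{id}_{\mathbb R^\nu/\kerGamma},
\]
which, reading $\tV_0$ through the function it induces on the quotient, is $\tV_0$ itself on $\mathbb R^\nu$ because $\tV_0$ is $\Gamma$-translation-invariant. Hence $\tV = \tV_0$ is a $\Gamma$-translation-invariant GLF, which is the assertion. Taken together, items~1 and~2 say that $\tV \mapsto \tV \circ G$ and $\hV \mapsto \hV \circ \Gamma$ are mutually inverse bijections between the $\Gamma$-translation-invariant GLFs of $\Ns$ and its dual GLFs.

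The only delicate point — and where I expect the bookkeeping to need care — is keeping track of domains: $\hV$ lives on $\Imgamma$, $\tV_0$ on $\mathbb R^\nu$ (equivalently on $\mathbb R^\nu/\kerGamma$ via invariance), and the substitution of a representative for a coset must be made at precisely the step that invokes $\Gamma$-translation-invariance. Should one prefer to verify the decrease inequality of Definition~\ref{def.GLF} for $\tV = \hV\circ\Gamma$ directly from a decrease property of $\hV$, rather than via the shortcut $\tV = \tV_0$, the chain rule $\tfrac{\partial\tV}{\partial r}(r) = \tfrac{\partial\hV}{\partial z}(\Gamma r)\,\Gamma$ holds at points of differentiability by Rademacher's theorem, and the resulting inequality extends from almost every $r$ to all $r \ge 0$ exactly as in the proof of Proposition~\ref{xi_GLF}, using Lemma~9 of \cite{MA_TAC22}.
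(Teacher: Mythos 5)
Your proposal is correct and matches the paper's (implicit) argument: the paper states Theorem~\ref{th.equiv} as an ``immediate consequence'' of the quotient construction in \S\ref{sec.quotient} and of Definition~\ref{def.hV_}, and your unwinding of that definition together with the identities $\Gamma\circ G=\mathrm{id}_{\Imgamma}$ and $G\circ\Gamma=\mathrm{id}_{\mathbb R^\nu/\kerGamma}$ is exactly the intended reasoning. Your added observations (that the two maps are mutually inverse bijections, and the domain bookkeeping between $\Imgamma$, $\mathbb R^\nu$, and the quotient) are consistent with, and slightly more explicit than, what the paper records.
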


\begin{corollary}
\label{cor.dualGLF_dec}
Consider a network $\Ns=(\Ss,\Rs)$, and consider \eqref{e.ode} with a fixed admissible $R$ and a steady state $\bx$. Assume that $\hV$ is a dual GLF for $\Ns$. Then, $V(x)=\hV(x-\bx), x\in\Cs_\bx$ is {a} Lyapunov function for \eqref{e.ode} on $\Cs_\bx$.
\end{corollary}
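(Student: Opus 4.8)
This corollary is meant to be an immediate consequence of the machinery built in \S\ref{sec.quotient}--\S\ref{sec.dualGLF}, so the plan is simply to assemble Theorem~\ref{th.equiv}, Proposition~\ref{xi_GLF}, and the Dini-derivative bookkeeping of \S\ref{sec.dualGLF}. First I would set $\tV := \hV \circ \Gamma$. Since $\hV$ is a dual GLF, Theorem~\ref{th.equiv}(2) tells us that $\tV$ is a $\Gamma$-translation-invariant GLF for $\Ns$, and moreover $\hV(z) = \tV(G(z))$ for every $z \in \Imgamma$; in particular $V(x) = \hV(x-\bx) = \tV(\xi)$ for any $\xi$ with $\Gamma\xi = x-\bx$. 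From here the two things to check are nonnegativity with the correct zero set, and the decrease inequality.

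For positive definiteness, fix $x \in \Cs_\bx$, so $x-\bx \in \Imgamma$ and we may pick $\xi \in \mathbb R^\nu$ with $\Gamma\xi = x-\bx$, giving $V(x) = \tV(\xi)$. The delicate point is that Definition~\ref{def.GLF} only guarantees $\tV(r)\ge 0$, and $\tV(r)=0 \Leftrightarrow \Gamma r=0$, on the \emph{nonnegative} orthant, whereas this $\xi$ need not be nonnegative. I would resolve this using AS1: choose a strictly positive flux $v \in \kerGamma$ and $\lambda>0$ large enough that $\xi + \lambda v \gg 0$. By $\Gamma$-translation-invariance $\tV(\xi) = \tV(\xi+\lambda v)$, and since $\xi+\lambda v \in \mathbb R_{\ge 0}^\nu$ we get $V(x) = \tV(\xi+\lambda v)\ge 0$, with $V(x)=0$ iff $\Gamma(\xi+\lambda v)=0$ iff $\Gamma\xi = 0$ iff $x=\bx$. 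Hence $V$ is nonnegative on $\Cs_\bx$ and vanishes exactly at the equilibrium $\bx$.

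For the decrease condition, fix $x \in \Cs_\bx$ and again take $\xi$ with $\Gamma\xi = x-\bx$, so that $\bx + \Gamma\xi = x$ and therefore $R(\bx+\Gamma\xi)=R(x)$. Using the definition \eqref{dini} of the upper Dini derivative together with $V = \hV(\,\cdot\,-\bx)$ and $\tV = \hV\circ\Gamma$,
\[
\dot V(x) = \limsup_{h\to0^+}\frac{\hV(\Gamma(\xi + hR(x))) - \hV(\Gamma\xi)}{h} = \limsup_{h\to0^+}\frac{\tV(\xi + hR(\bx+\Gamma\xi)) - \tV(\xi)}{h} = \dot\tV(\xi),
\]
where the last quantity is exactly the Dini derivative of $\tV$ along the extent-of-reaction system \eqref{e.xiode} at $\xi$. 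Since $\bx$ is a steady state and $\tV$ is a $\Gamma$-translation-invariant GLF, Proposition~\ref{xi_GLF} applies and yields $\dot\tV(\xi)\le 0$, hence $\dot V(x)\le 0$ for all $x \in \Cs_\bx$. (This is the same computation as in \S\ref{sec.dualGLF}; one can equivalently phrase it by transporting the non-increase of $t\mapsto\tV(\psi_t(\cdot))$ through the commuting diagram of Theorem~\ref{bisimulation}.)

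I do not expect a serious obstacle: once Theorem~\ref{th.equiv} and Proposition~\ref{xi_GLF} are granted, the decrease step reduces to the exact identity $\dot V(x)=\dot\tV(\xi)$ above. The only step needing genuine care is the positive-definiteness claim, where one must upgrade the nonnegativity/zero-set information about $\tV$ — known a priori only on $\mathbb R_{\ge 0}^\nu$ — to the corresponding statement about $\hV$ on all of $\Imgamma$; that is precisely where AS1 (existence of a strictly positive flux) and $\Gamma$-translation-invariance are both used, and it is also the reason the zero set is the single point $\bx$ rather than the full set of steady states in $\Cs_\bx$.
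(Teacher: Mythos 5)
Your proposal is correct and follows essentially the same route as the paper: the paper also reduces the decrease property to Proposition~\ref{xi_GLF} via the identity $\hV(\varphi_t(x)-\bx)=\tV(\psi_t(\Psi(x)))$ from the commuting diagram of Theorem~\ref{bisimulation}, and simply declares positive definiteness ``evident.'' Your extra care with the Dini-derivative identity $\dot V(x)=\dot\tV(\xi)$ and with upgrading the zero-set characterization of $\tV$ from $\mathbb R_{\ge0}^\nu$ to all of $\Imgamma$ via AS1 and $\Gamma$-translation-invariance fills in details the paper omits, and is sound.
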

 
\paragraph{Example (cont'd)}  Consider the BIN \eqref{e.ptm} and the introduced GLF \eqref{e.tV_ptm}. Then, the dual GLF is given as follows $\hV(z)=\|Bz\|_\infty$, where $B$ is any matrix satisfying $C=B\Gamma$, which is not unique in this case. One possible choice is the following,
\begin{equation}\label{Bptm}B=   \left[\begin{array}{rrrrrr} 1 & 0 & 0 & 0 & 0 & 0\\ 0 & 1 & 0 & 0 & 0 & 0\\ 0 & 0 & 1 & 1 & 0 & 0\\ 0 & 0 & 0 & 1 & 0 & 0\\ 0 & 0 & 0 & 1 & -1 & 0\\ 0 & 0 & 0 & 0 & 0 & 1 \end{array} \right ].
\end{equation}
Therefore, for any given steady state $\bx \in\mathbb R_+^n$, $\hV(x-\bx)$ is a Lyapunov function for \eqref{e.ode} on $\Cs_\bx$. 
  
\subsection{The class of $\ell_\infty$-norm GLFs} 

We focus now on GLFs of the form 
$\tV(r)=\|C r\|_\infty$ for some matrix 
$C=[c_1,\dots,c_m]^T \in \mathbb R^{m\times \nu}$, $m>0$. 
We will assume, without loss of generality, that $C$ is a minimal representation of $\tV$ which means that for all $k=1,\dots,m$, the set $\{r| \tV(r)=c_k^Tr\}$ has non-empty interior. 
We can make this assumption because if $C$ is not minimal, we can remove rows from $C$ to make it a minimal representation. 
  
In our previous work, we had the following characterization:

\begin{theorem}[\cite{MA_cdc14,MA_LEARN}]\label{th.metzler}
Suppose given a network $(\Ns,\Rs)$. Then, the following two statements are equivalent:
\begin{enumerate}
\item $\tV(r)=\|Cr\|_\infty$ is a GLF for $\Ns$,
\item 	$\ker C=\kerGamma$ and 
there exist Metzler matrices $\tl\Lambda_{\ell} \in \mathbb R^{2m\times 2m}, \ell=1,\dots,s$  such that 
\begin{equation}\label{e.metzler}
\tilde C Q_\ell = \tl\Lambda_{\ell} \tilde C, ~\mbox{and}~ \tl\Lambda_\ell \mathbf 1 =0, \ell=1,\dots,s, 
\end{equation}  	
where $\tilde C=[C^T,-C^T]^T$. 
\end{enumerate}  
\end{theorem}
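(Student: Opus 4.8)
The plan is to deduce both directions from the already-established equivalence of Theorem~\ref{t.comLF}: $\tV(r)=\|Cr\|_\infty$ is a GLF for $(\Ns,\Rs)$ if and only if it is a common Lyapunov function for the rank-one linear systems $\dot r=Q_\ell r$, $\ell=1,\dots,s$, where $Q_\ell=e_{j_\ell}\gamma_{i_\ell}^{T}$. By definition, being such a common Lyapunov function means two separate things: the kernel identity $\ker\tV=\bigcap_{\ell}\ker Q_\ell$, and the dissipation inequalities $(\partial\tV/\partial r)(r)\,Q_\ell r\le0$ for every $\ell$. Since the dissipation inequalities decouple over $\ell$, I would first dispose of the kernel condition once and for all, and then characterize, for a single fixed $\ell$, when $\tV$ is nonincreasing along $\dot r=Q_\ell r$; the equivalence with statement~2 then follows by reassembling.

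For the kernel condition, clearly $\ker\tV=\{r:\|Cr\|_\infty=0\}=\ker C$. On the other side, $Q_\ell r=e_{j_\ell}(\gamma_{i_\ell}^{T}r)$ and $\gamma_{i_\ell}^{T}r$ is the $i_\ell$-th entry of $\Gamma r$, so $\ker\Gamma\subseteq\bigcap_\ell\ker Q_\ell$, while $\bigcap_\ell\ker Q_\ell$ is the set of $r$ with $(\Gamma r)_i=0$ for every species $i$ that appears as a reactant of at least one reaction. For any species $i$ that never appears as a reactant, $\alpha_{ij}=0$, hence $\gamma_{ij}=\beta_{ij}\ge0$, for all $j$; then AS1, i.e.\ the existence of $v\gg0$ with $\Gamma v=0$, forces $\sum_j\gamma_{ij}v_j=0$ with all terms nonnegative, so the whole $i$-th row of $\Gamma$ vanishes and $(\Gamma r)_i=0$ automatically. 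Hence $\bigcap_\ell\ker Q_\ell=\ker\Gamma$, so the kernel identity for $\tV$ is exactly $\ker C=\ker\Gamma$.

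For the dissipation part, fix $\ell$ and write $\tilde C=[C^{T},-C^{T}]^{T}$ with rows $\tilde c_1,\dots,\tilde c_{2m}$, so that $\tV(r)=\|\tilde Cr\|_\infty=\max_k\tilde c_k^{T}r$. The easy direction is ``$\Leftarrow$'': given $\tilde CQ_\ell=\tl\Lambda_\ell\tilde C$ with $\tl\Lambda_\ell$ Metzler and $\tl\Lambda_\ell\mathbf 1=0$, the vector $z=\tilde Cr$ obeys $\dot z=\tl\Lambda_\ell z$ along $\dot r=Q_\ell r$ while $\tV(r)=\|z\|_\infty$; a Metzler matrix with zero row sums has $\mu_\infty(\tl\Lambda_\ell)=\max_k\sigma_k(\tl\Lambda_\ell)=0$, so the logarithmic-norm estimate makes $\|z(t)\|_\infty$, hence $\tV(r(t))$, nonincreasing, and the usual nonsmooth bookkeeping (almost-everywhere differentiability, then extension to all $r$ via Lemma~9 of \cite{MA_TAC22}, exactly as in the proof of Proposition~\ref{xi_GLF}) recovers $(\partial\tV/\partial r)Q_\ell r\le0$. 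The direction ``$\Rightarrow$'' is where the real work lies: by the minimality of $C$ (hence, using $\tV(-r)=\tV(r)$, of $\tilde C$), each polyhedral cone $\Omega_k=\{r:(\tilde c_k-\tilde c_j)^{T}r\ge0,\ j=1,\dots,2m\}$ is full-dimensional, and on its interior the $k$-th piece is the unique active one, so the dissipation inequality there reduces to $\tilde c_k^{T}Q_\ell r\le0$, i.e.\ $(-Q_\ell^{T}\tilde c_k)^{T}r\ge0$, which extends to all of $\Omega_k$ by continuity. Thus the linear functional $-Q_\ell^{T}\tilde c_k$ is nonnegative on the cone $\Omega_k$, so Farkas' lemma yields coefficients $\mu_{kj}\ge0$ with $-Q_\ell^{T}\tilde c_k=\sum_{j\ne k}\mu_{kj}(\tilde c_k-\tilde c_j)$. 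Putting $(\tl\Lambda_\ell)_{kj}=\mu_{kj}$ for $j\ne k$ and $(\tl\Lambda_\ell)_{kk}=-\sum_{j\ne k}\mu_{kj}$ gives a Metzler matrix with $\tl\Lambda_\ell\mathbf 1=0$, and a direct computation shows its $k$-th row times $\tilde C$ equals $\tilde c_k^{T}Q_\ell$, so $\tl\Lambda_\ell\tilde C=\tilde CQ_\ell$.

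Reassembling: $\tV$ is a GLF iff (Theorem~\ref{t.comLF}) it is a common Lyapunov function for $\dot r=Q_\ell r$, $\ell=1,\dots,s$, iff $\ker C=\ker\Gamma$ together with, for each $\ell$, the existence of a Metzler $\tl\Lambda_\ell$ with $\tl\Lambda_\ell\mathbf 1=0$ and $\tilde CQ_\ell=\tl\Lambda_\ell\tilde C$ — which is statement~2. The hardest point will be the ``$\Rightarrow$'' step just described: one must carefully pass from the Dini-derivative form of the dissipation inequality for the nonsmooth $\tV$ to the pointwise bound $\tilde c_k^{T}Q_\ell r\le0$ on the relative interior of each full-dimensional piece $\Omega_k$ (this is precisely where minimality of $C$ is indispensable, and where one checks that the lower-dimensional overlaps between pieces do not obstruct the argument), and one must also reconcile the fact that Definition~\ref{def.GLF} only posits the inequality for $r\in\mathbb R_{\ge0}^{\nu}$ with the Farkas argument, which needs it on the entire cone $\Omega_k$ — a gap closed using the evenness and positive homogeneity of $\tV$, equivalently by invoking the ``for all $r$'' form of the common-Lyapunov condition furnished by Theorem~\ref{t.comLF}.
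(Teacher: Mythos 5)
Your proposal is essentially correct, but note that the paper does not actually prove Theorem~\ref{th.metzler}: it is imported from the cited references \cite{MA_cdc14,MA_LEARN}, and the only related argument in the paper is the appendix proof of Corollary~\ref{cor}, which takes Theorem~\ref{th.metzler} for granted and merely converts the $2m\times 2m$ Metzler/zero-row-sum certificate into the $m\times m$ form with $\mu_\infty(\Lambda_\ell)\le 0$. Your reconstruction --- reduce to the common-Lyapunov-function characterization of Theorem~\ref{t.comLF}, dispose of the kernel identity by showing $\bigcap_\ell\ker Q_\ell=\ker\Gamma$ under AS1, and then, for each fixed $\ell$, pass between the Metzler certificate and the dissipation inequality --- is the standard route and matches the strategy of the cited works. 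Both directions are sound: the ``$\Leftarrow$'' direction via $\mu_\infty(\tl\Lambda_\ell)=0$ for a Metzler matrix with zero row sums (equivalently, the direct estimate $\tilde c_k^TQ_\ell r=\sum_j\lambda_{kj}\tilde c_j^Tr\le(\sum_j\lambda_{kj})\tilde c_k^Tr=0$ at any active index $k$), and the ``$\Rightarrow$'' direction via full-dimensionality of the linearity cones $\Omega_k$ (which is exactly what the paper's standing minimality assumption on $C$, extended to $\tilde C$ by evenness, guarantees) followed by Farkas' lemma and the row-by-row assembly of $\tl\Lambda_\ell$. You also correctly identify and close the one genuine subtlety, namely that Definition~\ref{def.GLF} posits the decrease condition only on $\mathbb R_{\ge0}^\nu$ while the Farkas argument needs it on all of $\Omega_k$; invoking the unrestricted form of the condition supplied by Theorem~\ref{t.comLF} (or positive homogeneity plus evenness of $\tV$) is the right fix. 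One small imprecision: on $\mathrm{int}\,\Omega_k$ the active piece need not be literally unique if $\tilde C$ has repeated rows, but every active gradient there coincides with $\tilde c_k$, so the differentiability claim and the continuity extension to the boundary of $\Omega_k$ go through unchanged.
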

 
In this paper, we will use an alternative form of Theorem \ref{th.metzler} which is stated below and  proved in the appendix:
 
\begin{corollary}\label{cor}
Suppose given a network $(\Ns,\Rs)$. Then, the following two statements are equivalent:
 \begin{itemize}
 \item $\tV(r)=\|Cr\|_\infty$ is  a GLF.
 \item There exists a matrix $B$ with $\rank \Gamma=\rank (B\Gamma)$ such that $C=B\Gamma$ and 
 		there exist matrices $\Lambda_{\ell} \in \mathbb R^{m\times m}, \ell=1,\dots,s$  such that 
 		\[  C Q_\ell = \Lambda_{\ell}  C ~\mbox{and}~ \mu_\infty(\Lambda_\ell) \le 0, \ell=1,\dots,s. \]
 	\end{itemize}
 \end{corollary}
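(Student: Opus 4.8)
The plan is to derive the corollary from Theorem~\ref{th.metzler} by showing that its second bullet is merely a repackaging of the two conditions appearing there, namely the kernel condition $\ker C=\kerGamma$ and the Metzler condition on the doubled matrix $\tilde C=[C^T,-C^T]^T$. For the kernel part I would check that ``$\ker C=\kerGamma$'' is equivalent to ``there is a $B$ with $C=B\Gamma$ and $\rank\Gamma=\rank(B\Gamma)$''. In one direction, $C=B\Gamma$ automatically gives $\kerGamma\subseteq\ker C$, and since $\rank C=\rank(B\Gamma)=\rank\Gamma$ and both operators act on $\mathbb R^\nu$, a dimension count upgrades this to equality. Conversely, $\kerGamma\subseteq\ker C$ says that every row of $C$ annihilates $\kerGamma$, hence lies in the row space of $\Gamma$, which yields $C=B\Gamma$; and $\ker C=\kerGamma$ then forces $\rank C=\rank\Gamma$, i.e.\ $\rank(B\Gamma)=\rank\Gamma$.

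For the matrix part, assume $\ker C=\kerGamma$ and compare the existence of Metzler $\tl\Lambda_\ell\in\mathbb R^{2m\times 2m}$ with $\tilde C Q_\ell=\tl\Lambda_\ell\tilde C$ and $\tl\Lambda_\ell\mathbf 1=0$ against the existence of $\Lambda_\ell\in\mathbb R^{m\times m}$ with $CQ_\ell=\Lambda_\ell C$ and $\mu_\infty(\Lambda_\ell)\le0$. Passing from the doubled picture to the single one: write $\tl\Lambda_\ell$ in $m\times m$ blocks $\left[\begin{smallmatrix}A_{11}&A_{12}\\ A_{21}&A_{22}\end{smallmatrix}\right]$, read off the top block of $\tilde C Q_\ell=\tl\Lambda_\ell\tilde C$ to get $CQ_\ell=(A_{11}-A_{12})C$, and set $\Lambda_\ell:=A_{11}-A_{12}$; then, using that $A_{11}$ has nonnegative off-diagonal entries, that $A_{12}$ is entrywise nonnegative, and that the row sums of $\tl\Lambda_\ell$ vanish, the elementary bounds $|a-b|\le a+b$ and $-b\le b$ (for $a,b\ge0$) give $\sigma_k(\Lambda_\ell)\le(\text{$k$-th row sum of }[\,A_{11}\ A_{12}\,])=0$ for every $k$, so $\mu_\infty(\Lambda_\ell)\le0$. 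In the other direction, decompose $\Lambda_\ell=\Lambda_d+\Lambda_o^+-\Lambda_o^-$ into its diagonal part and the positive and negative parts of its off-diagonal part, and form $\tl\Lambda_\ell^{(0)}=\left[\begin{smallmatrix}\Lambda_d+\Lambda_o^+&\Lambda_o^-\\ \Lambda_o^-&\Lambda_d+\Lambda_o^+\end{smallmatrix}\right]$; one checks directly that this is Metzler, that $\tl\Lambda_\ell^{(0)}\tilde C=\tilde C Q_\ell$, and that its $k$-th row sum equals $\sigma_k(\Lambda_\ell)\le0$.

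The one step that is genuinely delicate — and the main obstacle — is repairing $\tl\Lambda_\ell^{(0)}$ into a matrix with row sums exactly zero without breaking either the Metzler property or the intertwining $\tl\Lambda_\ell\tilde C=\tilde C Q_\ell$. I would do this by adding a correction $M=[\,N\mid N\,]$ with $N\in\mathbb R^{2m\times m}$ entrywise nonnegative: since $M\tilde C=NC-NC=0$ the intertwining relation is unchanged, since $M\ge0$ the Metzler property is preserved (only off-diagonal entries are increased), and choosing each row of $N$ to have sum equal to minus one half of the corresponding row sum of $\tl\Lambda_\ell^{(0)}$ — a nonnegative number — makes $\tl\Lambda_\ell:=\tl\Lambda_\ell^{(0)}+M$ have zero row sums. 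With both translations in hand, the corollary follows immediately from Theorem~\ref{th.metzler}. As an alternative for the ``$\Leftarrow$'' implication, one can bypass Theorem~\ref{th.metzler} and verify Definition~\ref{def.GLF} directly: positive-definiteness with respect to the steady states follows from $\ker C=\kerGamma$, and for the remaining inequality one writes $\frac{\partial\tV}{\partial r}(r)K\Gamma r=\sum_\ell\bar\rho_\ell\frac{\partial\tV}{\partial r}(r)Q_\ell r$ via \eqref{e.rode}; if $\tV(r)=c_k^Tr\ge0$ is an active term, then $CQ_\ell=\Lambda_\ell C$ gives $c_k^TQ_\ell r=\sum_j[\Lambda_\ell]_{kj}c_j^Tr\le\sigma_k(\Lambda_\ell)\,c_k^Tr\le0$ because $|c_j^Tr|\le c_k^Tr$, after which one passes from almost every $r$ to all $r\ge0$ exactly as in the proof of Proposition~\ref{xi_GLF}.
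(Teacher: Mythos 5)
Your proposal is correct and follows essentially the same route as the paper's appendix proof: both reduce the statement to Theorem~\ref{th.metzler}, extract $\Lambda_\ell=\Lambda_{11}-\Lambda_{12}$ from the block form of $\tl\Lambda_\ell$ and bound $\mu_\infty(\Lambda_\ell)$ via $|a-b|\le a+b$ together with the zero row sums, and in the converse direction rebuild a Metzler $\tl\Lambda_\ell$ from the positive and negative parts of $\Lambda_\ell$ plus a nonnegative row-sum correction (the paper spreads $\sigma_i/(2n)$ uniformly over the $i$th row of $[\Lambda_{11}\ \Lambda_{12}]$, which is exactly a special case of your correction $M=[\,N\mid N\,]$ with $M\tilde C=0$). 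Your explicit treatment of the equivalence $\ker C=\kerGamma\iff(C=B\Gamma$ with $\rank(B\Gamma)=\rank\Gamma)$, which the paper only asserts, and your alternative direct verification of Definition~\ref{def.GLF} for the converse are correct but do not change the substance of the argument.
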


 \paragraph{The dual $\ell_\infty$ GLF}  Using Corollary \ref{th.equiv}, and recalling that $C$ can be written as $C=B\Gamma$, we get the function $\hV$ as:
 \begin{equation}
 	\hV(z)=\|CG(x)\|_\infty= \|B\Gamma Gz\|_\infty= \|Bz\|_\infty,
 \end{equation}
 where the last equality follows since $\Gamma G(z)=z$ for all $z\in\Imgamma$.

 \begin{corollary}\label{th.equiv_piecewise}Let $\Ns$ be given. Then,
 	\begin{enumerate}
 		\item If  $\tV(r)=\|C r\|_\infty$ is a GLF for $\Ns$, then $\hV(z)=\|Bz\|_\infty$ is a dual GLF for $\Ns$ where $C=B\Gamma$.
 		\item If $\hV(z)=\|Bz\|_\infty$ is a dual GLF for $\Ns$, then $\tV(r)=\|B\Gamma r\|_\infty$ is a GLF for $\Ns$.
 	\end{enumerate} 
 \end{corollary}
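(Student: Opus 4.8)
The plan is to deduce both implications by specializing the general equivalence of Theorem~\ref{th.equiv} to the $\ell_\infty$ setting, using Corollary~\ref{cor} to supply the factorization $C = B\Gamma$ and to convert the ``$\Gamma$-translation-invariance'' hypothesis of Theorem~\ref{th.equiv} into an explicit computation of the relevant composition.

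For the first item, I would start from the hypothesis that $\tV(r) = \|Cr\|_\infty$ is a GLF. By Corollary~\ref{cor} there is a matrix $B$ with $\rank\Gamma = \rank(B\Gamma)$ and $C = B\Gamma$, so that $\tV(r) = \|B\Gamma r\|_\infty$. The key observation is that this expression depends on $r$ only through $\Gamma r$, which is exactly the assertion that $\tV$ is $\Gamma$-translation-invariant: if $\xi - \xi^* \in \kerGamma$ then $\Gamma\xi = \Gamma\xi^*$ and hence $\tV(\xi) = \tV(\xi^*)$. Consequently Theorem~\ref{th.equiv}(1) applies, and it tells us that $\hV := \tV\circ G$ is a dual GLF. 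It then remains to identify this composition explicitly: for $z \in \Imgamma$, using that $G$ is the inverse of the bijection $\Gamma : \mathbb R^\nu/\kerGamma \to \Imgamma$ and that $\tV$ descends to a well-defined function on $\mathbb R^\nu/\kerGamma$, we get $\hV(z) = \tV(G(z)) = \|B\,\Gamma G(z)\|_\infty = \|Bz\|_\infty$, where the last step uses $\Gamma\circ G = \mathrm{id}_{\Imgamma}$ as established in \S\ref{sec.quotient}. I would also note in passing that the rank condition from Corollary~\ref{cor} forces $\ker C = \kerGamma$, so the value $\|Bz\|_\infty$ on $\Imgamma$ is independent of the particular $B$ chosen with $C = B\Gamma$.

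For the second item the argument is essentially a one-line substitution: assuming $\hV(z) = \|Bz\|_\infty$ is a dual GLF, Theorem~\ref{th.equiv}(2) gives that $\tV := \hV\circ\Gamma$ is a $\Gamma$-translation-invariant GLF, and evaluating the composition yields $\tV(r) = \hV(\Gamma r) = \|B\Gamma r\|_\infty$, which is of the required form $\|Cr\|_\infty$ with $C = B\Gamma$.

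I do not expect a genuine obstacle, since the statement is a repackaging of Theorem~\ref{th.equiv} and Corollary~\ref{cor}; the only point that needs care is the bookkeeping around the quotient space $\mathbb R^\nu/\kerGamma$ — namely that $\tV\circ G$ is well defined precisely because $\tV$ is $\Gamma$-translation-invariant, and that the identity $\Gamma\circ G = \mathrm{id}_{\Imgamma}$ is what collapses $\|B\Gamma G(z)\|_\infty$ to $\|Bz\|_\infty$.
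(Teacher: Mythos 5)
Your proposal is correct and follows essentially the same route as the paper: the paper likewise obtains $C=B\Gamma$ from Corollary~\ref{cor}, invokes Theorem~\ref{th.equiv}, and computes $\hV(z)=\|B\Gamma Gz\|_\infty=\|Bz\|_\infty$ via $\Gamma\circ G=\mathrm{id}_{\Imgamma}$. Your explicit remarks that $\tV(r)=\|B\Gamma r\|_\infty$ depends on $r$ only through $\Gamma r$ (hence is $\Gamma$-translation-invariant) and that $\tV\circ G$ is well defined on the quotient are useful bookkeeping the paper leaves implicit, but they do not change the argument.
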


 \mybox{\paragraph{Example (cont'd)}  Consider the BIN \eqref{e.ptm} and the introduced GLF \eqref{e.tV_ptm}. Corollary \ref{cor} asserts the existence of six {matrices} $\Lambda_\ell,\ell=1,\dots,s$.  Using the algorithms developed in \cite{PWLRj,MA_LEARN} we can find the matrices as follows:
 {\tiny \begin{align*}\Lambda_1&=\left[\begin{array}{rrrrrrrr} -1 & 0 & 0 & 0 & 0 & 0\\ 0 & -1 & 0 & 0 & 1 & 0\\ 0 & 0 & -1 & 0 & 0 & -1\\ 0 & 0 & 0 & 0 & 0 & 0\\ 0 & 0 & 0 & 0 & 0 & 0\\ 0 & 0 & 0 & 0 & 0 & 0 \end{array}\right],\Lambda_2=\left[\begin{array}{rrrrrrrr} -1 & 0 & 0 & 0 & -1 & 0\\ 0 & -1 & 0 & 0 & 0 & 0\\ 0 & 0 & -1 & 1 & 0 & 0\\ 0 & 0 & 0 & 0 & 0 & 0\\ 0 & 0 & 0 & 0 & 0 & 0\\ 0 & 0 & 0 & 0 & 0 & 0 \end{array}\right], \Lambda_3=\left[\begin{array}{rrrrrrrr} 0 & 0 & 0 & 0 & 0 & 0\\ 0 & -1 & 0 & 0 & 0 & 0\\ 0 & 0 & 0 & 0 & 0 & 0\\ 0 & 0 & 1 & -1 & 0 & 0\\ -1 & 0 & 0 & 0 & -1 & 0\\ 0 & 0 & 0 & 0 & 0 & 0 \end{array}\right],\\
 		\Lambda_4&=\left[\begin{array}{rrrrrrrr} 0 & 0 & 0 & 0 & 0 & 0\\ 0 & 0 & 0 & 0 & 0 & 0\\ 0 & -1 & -1 & 0 & 0 & 0\\ 0 & 0 & 0 & -1 & 0 & 0\\ 0 & 0 & 0 & 0 & 0 & 0\\ 0 & 0 & 0 & 0 & 1 & -1 \end{array}\right], \Lambda_5=\left[\begin{array}{rrrrrrrr} 0 & 0 & 0 & 0 & 0 & 0\\ 0 & 0 & 0 & 0 & 0 & 0\\ -1 & 0 & -1 & 0 & 0 & 0\\ 0 & 0 & 0 & -1 & 1 & 0\\ 0 & 0 & 0 & 0 & 0 & 0\\ 0 & 0 & 0 & 0 & 0 & -1 \end{array}\right], \Lambda_6=\left[\begin{array}{rrrrrrrr} -1 & 0 & -1 & 0 & 0 & 0\\ 0 & 0 & 0 & 0 & 0 & 0\\ 0 & 0 & 0 & 0 & 0 & 0\\ 0 & 0 & 0 & 0 & 0 & 0\\ 0 & 0 & 0 & 1 & -1 & 0\\ 0 & 0 & 0 & 0 & 0 & -1 \end{array}\right].\end{align*}}

 }
  
 \section{Contraction and Nonexpansivity}
 
 \subsection{Preliminaries}
 
 The are several formulations of contraction theory. We are going to present the formulation that utilizes  logarithmic norms (a.k.a{.} matrix measures).
 
 \begin{definition}[Logarithmic Norms] Let $(\mathbb R^{n},|.|)$ be a normed space, and let $\|.\|_*$ be the corresponding induced matrix norm on $\mathbb R^{n\times n}$. Then, the associated matrix measure (a.k.a{.} logarithmic norm) can be defined as follows for a matrix $A \in \mathbb R^{n \times n}$:
 	\begin{equation}\label{e.lognorm}\mu_*(A):=\lim_{h \to 0^+} \frac{ \|I+hA\|_*-1}h{.}\end{equation}
 \end{definition}
 \begin{remark}The limit \eqref{e.lognorm} exists \cite{dahlquist58}, and it  can be evaluated explicitly for the standard norms \cite{sontag14}. For instance, the following expression can be used for the $\ell_\infty$ norm:
 	\begin{equation}\label{e.lognorm_inf}\mu_\infty(A) = \max_{i} \left (a_{ii} + \sum_{j \ne i} |a_{ij}|\right ).\end{equation}
 \end{remark}

 For a dynamical system, negativity of the logarithmic norm can be linked to contraction. This result has been stated in different forms, we state the result as follows.
 \begin{theorem}[\cite{sontag14}] \label{th.contraction}Consider a dynamical system $\dot x = f(x)$ defined on a convex subset $X$ of $\mathbb R^n$. Let $| \cdot |_*$ be a norm in $\mathbb R^n$ and $\|.\|_*$ the induced matrix norm on $\mathbb R^{n \times n}$. Assume that
 \[
 \forall x \in X, \quad \mu_*\left ( \frac{\partial f}{\partial x} (x) \right )  \le c. \]
 Then for any two solutions $\varphi(t;x), \varphi(t;y)$ of the dynamical system, the following condition holds:
 \begin{equation}
 | \varphi(t;x) - \varphi(t;y) |_* \le e^{c t} | x -y|_*.
 \end{equation}
 \end{theorem}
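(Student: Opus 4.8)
The plan is to follow the classical logarithmic-norm route: track the displacement $z(t) := \varphi(t;x) - \varphi(t;y)$ between the two trajectories and show its norm cannot grow faster than $e^{ct}$. First I would differentiate: since both $\varphi(\cdot;x)$ and $\varphi(\cdot;y)$ solve $\dot x = f(x)$, we have $\dot z(t) = f(\varphi(t;x)) - f(\varphi(t;y))$. Because $X$ is convex, the segment $s \mapsto \varphi(t;y) + s\, z(t)$, $s\in[0,1]$, joining $\varphi(t;y)$ to $\varphi(t;x)$ lies entirely in $X$, so the integral form of the mean value theorem gives
\[
\dot z(t) = A(t)\, z(t), \qquad A(t) := \int_0^1 \frac{\partial f}{\partial x}\big(\varphi(t;y) + s\, z(t)\big)\, ds \in \mathbb R^{n\times n}.
\]

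Next I would bound $\mu_*(A(t))$. From the defining limit \eqref{e.lognorm} together with the triangle inequality and positive homogeneity of the induced norm $\|\cdot\|_*$, the matrix measure $\mu_*$ is subadditive and positively homogeneous, hence convex; combining this with its continuity and a Riemann-sum approximation of the integral yields the Jensen-type bound $\mu_*(A(t)) \le \int_0^1 \mu_*\big(\tfrac{\partial f}{\partial x}(\varphi(t;y) + s\, z(t))\big)\, ds \le c$, where the last step uses the hypothesis at each point of the segment (which lies in $X$). The second ingredient is the infinitesimal comparison $D^+ g(t) \le \mu_*(A(t))\, g(t)$ for $g(t) := |z(t)|_*$ and $D^+$ the upper right Dini derivative: writing $z(t+h) = (I + hA(t))\,z(t) + o(h)$ gives $|z(t+h)|_* \le \|I + hA(t)\|_*\,|z(t)|_* + o(h)$, and dividing by $h$, taking $\limsup_{h\to0^+}$, and recognizing \eqref{e.lognorm} produces the estimate.

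Putting the two together gives $D^+ g(t) \le c\, g(t)$ for all $t$ for which the solutions exist, and a comparison (Gr\"onwall) argument for scalar functions obeying a Dini-derivative differential inequality gives $g(t) \le e^{ct} g(0)$, i.e.\ $|\varphi(t;x) - \varphi(t;y)|_* \le e^{ct}|x-y|_*$, as claimed. The step I expect to require the most care is the passage of $\mu_*$ through the integral defining $A(t)$ --- this relies on subadditivity of $\mu_*$ plus a limiting argument --- hand in hand with the observation that convexity of $X$ is exactly what guarantees the averaging segment stays in the region where the hypothesis $\mu_*(\partial f/\partial x) \le c$ is assumed. The Dini-derivative Gr\"onwall step is routine but must be invoked carefully since $g = |z(\cdot)|_*$ need not be differentiable everywhere.
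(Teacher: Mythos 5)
Your proposal is correct and is essentially the standard argument for this result; the paper itself states the theorem without proof, citing \cite{sontag14}, and the proof given there follows exactly your route (integral mean-value form of the displacement dynamics, subadditivity of $\mu_*$ to pass it through the averaged Jacobian, the Dini-derivative estimate $D^+|z|_*\le\mu_*(A(t))|z|_*$ via the definition \eqref{e.lognorm}, and a Gr\"onwall comparison). No gaps: your identification of convexity of $X$ as the hypothesis that keeps the averaging segment inside the region where $\mu_*(\partial f/\partial x)\le c$ holds is precisely the role it plays.
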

  Note that if $c<0$  the solutions of the system are exponentially contracting. If $c=0$, then the system is nonexpansive. %
 
 \subsection{Nonexpansion in concentration coordinates}
 We state our first major result, and then develop the required theory to prove it.
  
 \begin{theorem}\label{th.mainB}
 Let $\Ns=(\Ss,\Rs)$ be a given network, and let 
 \[
 \tV(r) \;=\; \|B \Gamma r\|_\infty
 \]
 be a GLF.
Let $\bx \in \mathbb R_{\ge 0}^n$ be any positive state, and define
\[
|{x}|_B \,:=\; \|B(x-\bx)\|_\infty \,.
\]
Then, $|.|_B$ is a norm on
\[
\Cs_\bx \,:=\;(\bx+\Imgamma)\cap\mathbb R_{\ge0}^n \,,
\]
and for any two solutions $x_1(\cdot)$ and $x_2(\cdot)$ of $\dot x=\Gamma R(x)$ in $\Cs_{\bx}$, we have
\[
 |x_1(t)-x_2(t)|_B \;\leq\; |x_1(0)-x_2(0)|_B \quad \mbox{for all}\; t\geq0.
\]
In other words, the dynamics of $\Ns$  are nonexpansive in each stoichiometric class with respect to the ``$B$-norm'' in the sense defined next.

\end{theorem}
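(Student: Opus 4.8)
The plan is to reduce the claim about concentration trajectories to a statement about the extent-of-reaction dynamics, where the GLF machinery of Section 2 applies directly, and then to transfer a logarithmic-norm estimate back through the linear map $\Gamma$. First I would verify that $|\cdot|_B$ really is a norm on $\Cs_\bx$: since $C = B\Gamma$ and $\ker C = \kerGamma$ (this is part of Corollary \ref{cor}/Theorem \ref{th.metzler}), the map $z \mapsto Bz$ is injective on $\Imgamma$, hence $|x|_B = \|B(x-\bx)\|_\infty$ vanishes on $\Cs_\bx$ only when $x = \bx$, and the remaining norm axioms are inherited from $\|\cdot\|_\infty$. This is routine.

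The core of the argument is to bound the logarithmic norm of the Jacobian of the induced vector field in the $B$-norm and invoke Theorem \ref{th.contraction} with $c = 0$. Write $f(x) = \Gamma R(x)$, so $\partial f/\partial x = \Gamma (\partial R/\partial x)$. Using the decomposition \eqref{e.dR}, $\partial R/\partial x = \sum_\ell \rho_\ell(x) E_{j_\ell i_\ell}$ with $\rho_\ell(x) \ge 0$, so the Jacobian is a nonnegative combination of the rank-one pieces, and correspondingly $\Gamma(\partial R/\partial x)\Gamma = \sum_\ell \rho_\ell Q_\ell$ on the $\xi$-side with $Q_\ell = e_{j_\ell}\gamma_{i_\ell}^T$. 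The key algebraic input is the intertwining relation from Corollary \ref{cor}: there exist $\Lambda_\ell$ with $C Q_\ell = \Lambda_\ell C$ and $\mu_\infty(\Lambda_\ell) \le 0$. Because the logarithmic norm is subadditive and positively homogeneous, $\mu_\infty\!\big(\sum_\ell \rho_\ell \Lambda_\ell\big) \le \sum_\ell \rho_\ell\,\mu_\infty(\Lambda_\ell) \le 0$. The plan is then to show that the logarithmic norm of $\partial f/\partial x$ measured in the $B$-norm equals (or is bounded by) $\mu_\infty$ of the corresponding operator acting on the $C$-coordinates, which by the intertwining is $\mu_\infty(\sum_\ell \rho_\ell \Lambda_\ell) \le 0$. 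Concretely, for a linear change of variables $w = Bz$ on $\Imgamma$, the $B$-norm on $\Imgamma$ pulls back the $\ell_\infty$-norm, and the conjugated Jacobian acting on $w$-space is governed by the $\Lambda_\ell$'s via $C = B\Gamma$; this is exactly the content the paper promises to supply through Lemma \ref{mainlemmaB}. Once $\mu_B(\partial f/\partial x(x)) \le 0$ for all $x \in \Cs_\bx$, Theorem \ref{th.contraction} gives $|x_1(t) - x_2(t)|_B \le |x_1(0)-x_2(0)|_B$.

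The main obstacle I anticipate is making the change-of-coordinates rigorous on the \emph{quotient/subspace} level rather than on all of $\mathbb R^\nu$ or $\mathbb R^n$. The matrix $B$ need not be square or invertible, $C = B\Gamma$ only determines things up to $\kerGamma$, and the trajectories live on the affine slice $\Cs_\bx$, not an open subset of $\mathbb R^n$; so one must be careful that Theorem \ref{th.contraction} is applied on the correct convex domain with the correct induced matrix norm, and that the identity relating $\mu_B$ on $\Imgamma$ to $\mu_\infty$ via the $\Lambda_\ell$ genuinely holds (and not merely an inequality in the wrong direction). I would handle this by passing to the extent-of-reaction system \eqref{e.xiode}, which by Theorem \ref{bisimulation} is bisimulation-equivalent to the concentration dynamics on $\Cs_\bx$, working in the quotient $\mathbb R^\nu/\kerGamma$ where $\Gamma$ becomes a bijection onto $\Imgamma$, and using $\Gamma$-translation-invariance of $\tV$ to ensure the $B$-norm descends to a genuine norm there. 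The remaining bookkeeping — that a positive combination of the $\Lambda_\ell$ with nonnegative $\mu_\infty$-bounded summands again has nonpositive $\mu_\infty$, and that $\rho_\ell(x)$ stays nonnegative and continuous on $\Cs_\bx$ including the boundary via $\overline{\mathcal K_\Ns}$ — is routine given AK1–AK3 and \eqref{e.dR}.
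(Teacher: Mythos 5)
Your proposal is correct and follows essentially the same route as the paper: the paper's proof is exactly "Lemma \ref{mainlemmaB} plus Theorem \ref{th.contraction}," where Lemma \ref{mainlemmaB} establishes $\mu_{B,\Imgamma}(\Gamma K)\le\mu_\infty(\sum_\ell\rho_\ell\Lambda_\ell)\le 0$ using the decomposition $\Gamma K=\sum_\ell\rho_\ell J_\ell$ and the relation $BJ_\ell=\Lambda_\ell B+Y_\ell D$ with $D$ annihilating $\Imgamma$. The only difference is that the paper handles the subspace issue you flag by directly defining the induced $(B,\Imgamma)$-matrix and logarithmic norms restricted to $\Imgamma$ rather than detouring through the extent-of-reaction quotient (which it instead derives as a corollary, Theorem \ref{th.mainC}); also note the Jacobian on the $\xi$-side is $(\partial R/\partial x)\Gamma=\sum_\ell\rho_\ell Q_\ell$, without the extra left factor of $\Gamma$ in your sketch.
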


{Although the theorem assumes the existence of a GLF defined on reaction coordinates, the trajectories are nonexpansive with respect to the norm $|.|_B$ which parallels the structure of the dual GLF defined earlier in \S 2.}
Our approach to prove the theorem is to show that the appropriate logarithmic norm is non-positive. However, since Theorem \ref{th.mainB} is stated with respect to the functional $|.|_B$ on the vector space $\Imgamma$, we need to introduce the necessary notions.

\subsubsection{Norms with non-square weightings}
 	{Consider normed spaces of the form $(\mathbb R^n,|.|), n\ge 1$}. Let $B\in\mathbb R^{m\times n}$. We are interested in studying the pair $(\cV,|.|_B)$ where $\cV$ is  a vector subspace of $\mathbb R^n$ and $|.|_B$ is the weighted operator $|.|_B: z\mapsto |Bz| ${, whose domain is $\mathbb \cV \subset \mathbb R^n$}. Such operators are usually studied using the notion of Minkowski functionals or gauge functions \cite{thompson96}. However, to keep the discussion self-contained, we develop the required notions here. %

    For easier future reference, we state a simple equivalency first:
 	\begin{lemma}\label{lem.linalg}
 Let $B\in\mathbb R^{m\times n}$, let $\cV\subset \mathbb R^n$ be a vector subspace, and let $\Gamma$ be any matrix such that $\Imgamma=\cV$. Then the following are equivalent:
 \begin{enumerate}
 	\item ${\dim}(B\cV)={\dim}(\cV)$.
 	\item ${\dim}(\Im(B\Gamma))={\dim}(\Im(\Gamma))$.
 	\item $\ker(B\Gamma)=\ker(\Gamma)$.
 	\item $\ker B|_{\Imgamma}=\{0\}$.
 \end{enumerate}
 \end{lemma}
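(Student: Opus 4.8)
\emph{Proof idea.} The plan is to treat this as the elementary linear-algebra fact that it is, establishing all four conditions equivalent by a short cycle $(1)\Rightarrow(2)\Rightarrow(3)\Rightarrow(4)\Rightarrow(1)$ and using throughout the identity $\Im(B\Gamma)=B(\Imgamma)=B\cV$. The equivalence $(1)\Leftrightarrow(2)$ then carries essentially no content: since $\Im(B\Gamma)=B\cV$ and $\Imgamma=\cV$ by hypothesis, the statement ``$\Im(B\Gamma)=\Imgamma$'' is just ``$B\cV=\cV$'', both understood (as in the use in Corollary~\ref{cor}) at the level of dimension, i.e.\ as $\rank(B\Gamma)=\rank\Gamma$, since $B$ is in general a non-square weighting.

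For $(2)\Leftrightarrow(3)$ I would apply the rank--nullity theorem to $\Gamma$ and to $B\Gamma$, both regarded as linear maps out of $\mathbb R^{k}$, where $k$ is the number of columns of $\Gamma$: this gives $\rank\Gamma=k-\dim\kerGamma$ and $\rank(B\Gamma)=k-\dim\ker(B\Gamma)$, so $\rank(B\Gamma)=\rank\Gamma$ iff $\dim\ker(B\Gamma)=\dim\kerGamma$; combining with the always-valid inclusion $\kerGamma\subseteq\ker(B\Gamma)$ (if $\Gamma v=0$ then $B\Gamma v=0$) upgrades the equality of dimensions to the set equality $\ker(B\Gamma)=\kerGamma$, which is $(3)$. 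For $(3)\Leftrightarrow(4)$ I would do a direct kernel chase rather than a dimension count: assuming $(3)$, if $z\in\ker B|_{\Imgamma}$ then $z=\Gamma v$ for some $v$, and $B\Gamma v=Bz=0$ forces $v\in\ker(B\Gamma)=\kerGamma$, hence $z=\Gamma v=0$; conversely, $(3)$ amounts to $\ker(B\Gamma)\subseteq\kerGamma$, and if $B\Gamma v=0$ then $\Gamma v\in\ker B\cap\Imgamma=\ker B|_{\Imgamma}$, which under $(4)$ is $\{0\}$, so $\Gamma v=0$ and $v\in\kerGamma$. This closes the cycle.

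I do not expect a genuine obstacle here: this is a bookkeeping lemma, recording that all four conditions are equivalent reformulations of ``$B$ is injective on $\Imgamma$''. The only points that call for a moment's care are (i) reading the subspace equalities in $(1)$ and $(2)$ at the level of dimension/rank, because $B$ need not be square, and (ii) remembering to exploit the one-sided inclusion $\kerGamma\subseteq\ker(B\Gamma)$ in order to pass from equality of dimensions to equality of the kernels themselves.
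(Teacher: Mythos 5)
Your argument is correct and complete; the paper states this lemma without proof (introducing it only as a ``simple equivalency''), and your cycle via $\Im(B\Gamma)=B(\Imgamma)$, rank--nullity, and the one-sided inclusion $\kerGamma\subseteq\ker(B\Gamma)$ is exactly the standard argument the authors presumably have in mind. Your point that conditions (1) and (2) must be read at the level of dimension (equivalently $\rank(B\Gamma)=\rank\Gamma$, matching the phrasing in Corollary~\ref{cor}) because $B$ is non-square is the right reading and worth stating explicitly.
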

 
Next, we show next that the pair $(\cV,|.|_B)$ is indeed a normed space.
\begin{lemma}\label{lem.normB}
Let $(\mathbb R^n,|.|)$ be a normed space. Let $B\in\mathbb R^{m\times n}$, and let $\cV\subset \mathbb R^n$ be a vector subspace. If ${\dim}(B\cV)={\dim}(\cV)$, then the pair $(\cV,|.|_B)$ is a normed space.
\end{lemma}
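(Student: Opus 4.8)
The plan is to verify directly that $|\cdot|_B$ satisfies the three defining axioms of a norm on the vector space $\cV$: absolute homogeneity, subadditivity, and positive definiteness. Two of these come essentially for free. Since $\cV$ is closed under addition and scalar multiplication, $|\cdot|_B: z\mapsto |Bz|$ is genuinely defined on a vector space; and since $B$ is linear and $|\cdot|$ is a norm on the ambient space, for $z,w\in\cV$ and any scalar $\lambda$ we get $|\lambda z|_B = |B(\lambda z)| = |\lambda|\,|Bz| = |\lambda|\,|z|_B$ and $|z+w|_B = |Bz+Bw| \le |Bz|+|Bw| = |z|_B + |w|_B$, while nonnegativity $|z|_B=|Bz|\ge 0$ is immediate.

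The only point that actually invokes the hypothesis $B\cV=\cV$ is definiteness, namely that $|z|_B=0$ forces $z=0$ \emph{for $z\in\cV$} (on all of $\mathbb R^n$ this can of course fail when $\ker B\ne\{0\}$). First I would note $|z|_B=0 \iff Bz=0 \iff z\in\ker B$, so definiteness on $\cV$ is exactly the assertion $\ker B\cap\cV=\{0\}$, i.e.\ $\ker B|_{\cV}=\{0\}$. Then I would apply Lemma~\ref{lem.linalg}: picking any matrix $\Gamma$ with $\Imgamma=\cV$, the equivalence of its items (1) and (4) states precisely that $B\cV=\cV$ holds iff $\ker B|_{\cV}=\{0\}$. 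Thus the hypothesis yields definiteness, completing the verification of the norm axioms.

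I do not expect a genuine obstacle here; this lemma is essentially bookkeeping, with its only nontrivial content already packaged into Lemma~\ref{lem.linalg}. The one thing worth stating carefully is that the dimension mismatch ($B:\mathbb R^n\to\mathbb R^m$) is harmless: homogeneity and the triangle inequality use the norm axioms in $\mathbb R^m$, and the definiteness step rests on the rank–nullity count $\dim(B\cV)=\dim\cV-\dim(\ker B\cap\cV)$ that underlies Lemma~\ref{lem.linalg}, which is valid for arbitrary $m$.
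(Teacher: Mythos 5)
Your proposal is correct and follows essentially the same route as the paper: the seminorm axioms are immediate from linearity of $B$ and the norm axioms in $\mathbb R^m$, and definiteness on $\cV$ is reduced to $\ker B\cap\cV=\{0\}$, which is exactly the equivalence of items (1) and (4) in Lemma~\ref{lem.linalg}. The paper's proof is just a terser version of yours.
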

 
\begin{proof}
Since $\cV$ is a vector subspace of $\mathbb R^n$, we immediately get that $|.|_B$ is semi-norm on  $\cV$. To show that it is a norm, we need to show that for all $x\in\cV$, $|x|_B=0$ iff $x=0$.  But this follows immediately from Lemma \ref{lem.linalg} 
 since we assumed that {${\dim}(B\cV)={\dim}(\cV)$}.  
\end{proof}

 \subsubsection{Induced matrix norms and logarithmic norms restricted to a subspace}

Let $(\mathbb R^n, |.|)$ be  a normed Euclidean space, and let $\cV \subset \mathbb R^n$ be a vector space.   
Next, we can define the $\cV$-induced matrix norm of a matrix $J \in\mathbb R^{n\times n}$ as $\|J\|_{\cV}:=\sup_{|z|=1, z\in\cV} |Jz|$, which is well-defined (finite) since the set $\{z: |z|=1, z\in\cV\}$ is compact. 
 
Similarly, we need to define the corresponding induced $(B,\cV)$-matrix norm of a matrix $J \in \mathbb R^{n\times n}$ as we did above.  %
As in Lemma \ref{lem.normB}, we assume that  ${\dim}(B\cV)={\dim}(\cV)$, i.e., the dimension of $\cV$ does not reduce if we multiply each vector by $B$ from the left. Using Lemma \ref{lem.linalg},  any vector $z\in\cV$ can be also written as $LBz$ for some matrix $L\in\mathbb R^{{n\times m}}$. Therefore, we can write:
\[ 
\sup_{\substack{|Bz|=1\\ z\in\cV }} |BJz|= \sup_{\substack{|Bz|=1\\ z\in\cV }} |BJLBz| \le \|BJL\| \sup_{\substack{|Bz|=1\\ z\in\cV }} |Bz|= \|BJL\| <\infty, 
\]
which ensures that we show that the induced $(B,\cV)$-matrix norm of any matrix $J$ is well-defined.
Therefore, we are justified in making the following definition.

\begin{definition} 
Consider the normed  space $(\mathbb R^n, |.|)$. Let $\cV \subset \mathbb R^n$ be a vector space, and let $B\in\mathbb R^{m\times n}$ be a matrix satisfying ${\dim}(B\cV)={\dim}(\cV)$. Then,
\begin{enumerate}
\item The induced $(B,\cV)$-norm of a matrix $J\in\mathbb R^{n\times n}$ restricted to $\cV$ is defined as:
\[
\|J\|_{B,\cV}:= \sup_{|z|_B=1,z\in\cV} |BJz|. 
\]
\item The $(B,\cV)$-logarithmic norm of a matrix $J\in\mathbb R^{n\times n}$ is defined as:
\[
\mu_{B,\cV}(J):= \lim_{h\to0^+} \frac{ \|I+hJ\|_{B,\cV} -1 } h. 
\] 
\end{enumerate} 
\end{definition}

\subsubsection{Computation of the logarithmic norm}

In order to prove Theorem \ref{th.mainB}, we set $\cV=\Imgamma$. Hence, we need to show that the $(B,\Imgamma)$-logarithmic norm of the Jacobian is non-positive. This is stated as follows.

\begin{lemma}
\label{mainlemmaB}
Let a network $\Ns=(\Rs,\Ss)$ be given. Fix an arbitrary $\bx \in \mathbb R_{\ge0}^n$. Assume that there exists a GLF for $\Ns$ of the form $\tV(r)=\|B\Gamma r\|_\infty$.   
Then, 
\[
\forall K \in \mathcal K_\Ns,~ \mu_{B,\Imgamma} ( \Gamma K )  \le \mu_\infty \left ( \sum_{\ell=1}^s \rho_\ell   \Lambda_{\ell} \right ) \le 0,
\]
where $\rho_1,\dots,\rho_s$ are the positive elements in $K$ as in \eqref{K_def}, and $\Lambda_1,\dots,\Lambda_s$ are constructed by solving the convex program in Corollary \ref{cor}.
\end{lemma}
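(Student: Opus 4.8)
The plan is to compute the $(B,\Imgamma)$-logarithmic norm directly from its definition, transport the computation into the $m$-dimensional ``$C$-coordinates'' where $C=B\Gamma$, and then exploit the intertwining relations $CQ_\ell = \Lambda_\ell C$ together with $\mu_\infty(\Lambda_\ell)\le 0$ supplied by Corollary \ref{cor}. First I would fix $K\in\mathcal K_\Ns$ and use the decomposition \eqref{e.calK}--\eqref{e.dR} to write $\Gamma K = \sum_{\ell=1}^s \rho_\ell\, \Gamma E_{j_\ell i_\ell} = \sum_{\ell=1}^s \rho_\ell\, Q_\ell'$ where $Q_\ell' = \gamma_{i_\ell} e_{j_\ell}^T$ — careful here that the $Q_\ell$ appearing in \eqref{e.rode} act on $r$-space ($\nu\times\nu$) whereas $\Gamma K$ acts on $x$-space ($n\times n$); the bookkeeping is that $C Q_\ell = \Lambda_\ell C$ with $C=B\Gamma$ lets us pass between the two. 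Concretely, for $z\in\Imgamma$ write $z=\Gamma\zeta$ for some $\zeta$ (nonunique, but $B\Gamma\zeta = Cz'$ is well-defined on the quotient by Lemma \ref{lem.linalg}); then $B\,\Gamma K\, z = B\,\Gamma K\,\Gamma\zeta$, and I want to recognize $\Gamma K\Gamma = \Gamma(\sum_\ell \rho_\ell Q_\ell)$ acting appropriately so that $B\Gamma K z = (\sum_\ell \rho_\ell \Lambda_\ell) B\Gamma\zeta = (\sum_\ell \rho_\ell\Lambda_\ell)\,Bz$ up to the kernel identifications.

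The key computational step is then: for $h>0$ small,
\[
\|I + h\Gamma K\|_{B,\Imgamma} = \sup_{\substack{|Bz|_\infty=1\\ z\in\Imgamma}} |B(I+h\Gamma K)z|_\infty = \sup_{\substack{|Bz|_\infty=1\\ z\in\Imgamma}} \bigl|\bigl(I + h\textstyle\sum_\ell \rho_\ell\Lambda_\ell\bigr)Bz\bigr|_\infty \le \bigl\|I + h\textstyle\sum_\ell \rho_\ell\Lambda_\ell\bigr\|_\infty,
\]
where the last inequality holds because $\{Bz : |Bz|_\infty=1,\ z\in\Imgamma\}$ is a subset of the full $\ell_\infty$ unit sphere in $\mathbb R^m$ (one needs $B\Imgamma=\Imgamma$ — equivalently $\ker C=\ker\Gamma$ from Corollary \ref{cor} — so that the substitution $w=Bz$ is legitimate and the constraint set is nonempty). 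Dividing by $h$, subtracting $1$, and letting $h\to0^+$ gives $\mu_{B,\Imgamma}(\Gamma K)\le \mu_\infty(\sum_\ell\rho_\ell\Lambda_\ell)$ by definition \eqref{e.lognorm}. For the second inequality, I would use subadditivity and positive homogeneity of the logarithmic norm $\mu_\infty$ together with $\rho_\ell\ge 0$: $\mu_\infty(\sum_\ell \rho_\ell\Lambda_\ell)\le \sum_\ell \rho_\ell\,\mu_\infty(\Lambda_\ell)\le 0$, since each $\mu_\infty(\Lambda_\ell)\le 0$ by Corollary \ref{cor}.

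The main obstacle I anticipate is making the identification $B\,\Gamma K\,z = (\sum_\ell\rho_\ell\Lambda_\ell)\,Bz$ fully rigorous, because it requires threading the relation $CQ_\ell=\Lambda_\ell C$ through the nonunique lift $z=\Gamma\zeta$ and through the two different ambient dimensions ($x$-space vs.\ $r$-space). The clean way is to invoke Lemma \ref{lem.linalg}: since $\ker B|_{\Imgamma}=\{0\}$, the map $z\mapsto Bz$ is a linear bijection from $\Imgamma$ onto $B\Imgamma=\Imgamma$, and one checks that $\Gamma K$ restricted to $\Imgamma$ is conjugated by this bijection to (the restriction of) $\sum_\ell\rho_\ell\Lambda_\ell$ — this is exactly the content of $CQ_\ell = \Lambda_\ell C$ once one verifies $Q_\ell = E_{j_\ell i_\ell}\Gamma$ pulls back $\Gamma E_{j_\ell i_\ell}$ correctly. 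A secondary, more minor, point is confirming that $\{z\in\Imgamma : |Bz|_\infty=1\}$ is nonempty and that the supremum defining $\|I+h\Gamma K\|_{B,\Imgamma}$ is attained (compactness, already noted in the text preceding the definition), so that the limit defining $\mu_{B,\Imgamma}$ exists — this parallels the standard argument that \eqref{e.lognorm} has a limit.
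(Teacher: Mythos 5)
Your proposal is correct and follows essentially the same route as the paper: both compute $\|I+h\Gamma K\|_{B,\Imgamma}$ from the definition, convert $B\Gamma K z$ into $(\sum_{\ell}\rho_\ell\Lambda_\ell)Bz$ for $z\in\Imgamma$, bound by $\|I+h\sum_{\ell}\rho_\ell\Lambda_\ell\|_\infty$, and finish with subadditivity of $\mu_\infty$ and Corollary \ref{cor} --- the only difference being that the paper establishes the key identity via Lemma \ref{simplelemma} ($BJ_\ell=\Lambda_\ell B+Y_\ell D$ with $Dz=0$ on $\Imgamma$), whereas your direct lift $z=\Gamma\zeta$ combined with $K\Gamma=\sum_\ell\rho_\ell Q_\ell$ and $CQ_\ell=\Lambda_\ell C$ reaches the same conclusion. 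One small slip to fix: the rank-one factor in $\Gamma E_{j_\ell i_\ell}$ is $\Gamma_{j_\ell}e_{i_\ell}^T$ (the $j_\ell$-th \emph{column} of $\Gamma$ times $e_{i_\ell}^T$), not $\gamma_{i_\ell}e_{j_\ell}^T$.
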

 
Before proceeding to the proof of Lemma \ref{mainlemmaB}, we need to decompose matrices of the form $\Gamma K$, where $K\in\mathcal K_\Ns$ into a sum of rank-one matrices as we did in  \eqref{e.rode}. Using \eqref{e.calK}, we can write :
\begin{equation}\label{e.decomp} \Gamma K  = \sum_{\ell=1}^s \bar\rho_\ell  \Gamma  E_{j_\ell i_\ell }   = {\sum_{\ell=1}^s \bar\rho_\ell  \Gamma  e_{j_\ell} e_{i_\ell }^T }= \sum_{\ell=1}^s \bar\rho_\ell  \overbrace{(\Gamma_{j_\ell}e_{i_\ell}^T)}^{J_\ell} =  \sum_{\ell=1}^s \bar\rho_\ell J_\ell,\end{equation}
where $\Gamma_1,\dots,\Gamma_n$ are the columns of $\Gamma$.  In addition, we need the following basic lemma:
\begin{lemma}\label{simplelemma} 
Given a network $\Ns$. Let $J_\ell, Q_\ell, {\ell=1,\ldots,s}$ be defined as above. Then:
\begin{enumerate}
 \item $\forall \ell, \Gamma Q_\ell = J_\ell \Gamma$.
 \item Let $C,B,\Lambda_\ell,\ell=1,\dots,s$ be defined as in Corollary \ref{cor}, and let $D$ be {a} matrix whose rows form a basis for the left null space of $\Gamma$.  {T}hen there exist matrices $Y_1,\dots,Y_s$ such that $BJ_\ell=\Lambda_\ell B+Y_\ell D$, $\ell=1,{\ldots},s$. 
\end{enumerate}
\end{lemma}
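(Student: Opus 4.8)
The plan is to treat the two claims separately; both reduce to elementary manipulations once the matrices are written in the rank-one form already introduced in \eqref{e.rode} and \eqref{e.decomp}.

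For the first claim, I would simply compute both sides directly. Recall that $Q_\ell = e_{j_\ell}\gamma_{i_\ell}^T$, where $e_{j_\ell}$ is the $j_\ell$-th standard basis vector of $\mathbb R^\nu$ and $\gamma_{i_\ell}^T$ is the $i_\ell$-th row of $\Gamma$, while $J_\ell = \Gamma_{j_\ell} e_{i_\ell}^T$, where $\Gamma_{j_\ell}$ is the $j_\ell$-th column of $\Gamma$ and $e_{i_\ell}$ is the $i_\ell$-th standard basis vector of $\mathbb R^n$. Then $\Gamma Q_\ell = (\Gamma e_{j_\ell})\gamma_{i_\ell}^T = \Gamma_{j_\ell}\gamma_{i_\ell}^T$, since $\Gamma e_{j_\ell}$ is exactly the $j_\ell$-th column of $\Gamma$; and $J_\ell\Gamma = \Gamma_{j_\ell}(e_{i_\ell}^T\Gamma) = \Gamma_{j_\ell}\gamma_{i_\ell}^T$, since $e_{i_\ell}^T\Gamma$ is exactly the $i_\ell$-th row of $\Gamma$. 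Comparing, $\Gamma Q_\ell = J_\ell\Gamma$. (One could instead note that grouping the triple product $\Gamma(\partial R/\partial x)\Gamma$ in the two possible ways yields $\bigl(\sum_\ell \rho_\ell J_\ell\bigr)\Gamma = \Gamma\bigl(\sum_\ell \rho_\ell Q_\ell\bigr)$ for all admissible coefficients $\rho_\ell$, and then appeal to their independence; but the direct computation is shorter.)

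For the second claim, I would start from the conjugation identity $CQ_\ell = \Lambda_\ell C$ supplied by Corollary \ref{cor}, together with the factorization $C = B\Gamma$. Substituting gives $B\Gamma Q_\ell = \Lambda_\ell B\Gamma$, and rewriting $\Gamma Q_\ell = J_\ell\Gamma$ via the first claim turns this into $BJ_\ell\Gamma = \Lambda_\ell B\Gamma$, i.e. $(BJ_\ell - \Lambda_\ell B)\Gamma = 0$. This says precisely that every row of the $m\times n$ matrix $BJ_\ell - \Lambda_\ell B$ lies in the left null space of $\Gamma$. Since by hypothesis the rows of $D$ form a basis of that subspace, each such row is a linear combination of the rows of $D$, and collecting the coefficient vectors as the rows of a matrix $Y_\ell$ yields $BJ_\ell - \Lambda_\ell B = Y_\ell D$, i.e. $BJ_\ell = \Lambda_\ell B + Y_\ell D$, as claimed.

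There is no genuine obstacle here: the one step worth a sentence is the passage from $(BJ_\ell - \Lambda_\ell B)\Gamma = 0$ to $BJ_\ell - \Lambda_\ell B = Y_\ell D$, which is just the standard fact that a matrix is annihilated on the right by $\Gamma$ exactly when its row space is contained in the left null space of $\Gamma$, hence in the row space of $D$. I would also remark that, in contrast to several of the neighboring results, the rank condition $\rank\Gamma = \rank(B\Gamma)$ plays no role in this lemma — only the factorization $C = B\Gamma$ and the relations of Corollary \ref{cor} are used.
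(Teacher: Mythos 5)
Your proposal is correct and follows essentially the same route as the paper's own proof: the same direct rank-one computation $\Gamma Q_\ell = \Gamma e_{j_\ell}\gamma_{i_\ell}^T = \Gamma_{j_\ell}\gamma_{i_\ell}^T = \Gamma_{j_\ell}e_{i_\ell}^T\Gamma = J_\ell\Gamma$ for the first claim, and the same substitution of $C=B\Gamma$ into $CQ_\ell=\Lambda_\ell C$ followed by the observation that $(BJ_\ell-\Lambda_\ell B)\Gamma=0$ forces the rows of $BJ_\ell-\Lambda_\ell B$ into the row space of $D$. Your closing remark that the rank condition of Corollary~\ref{cor} is not used here is accurate but incidental.
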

\begin{proof}
The first statement follows from these equalities:
\[
\Gamma Q_\ell = \Gamma e_{j_\ell} \gamma_{i_\ell}^T = \Gamma_{j_\ell} \gamma_{i_\ell}^T= \Gamma_{j_\ell} e_{i_\ell}^T \Gamma = J_\ell \Gamma \,.
\]
Now, using Corollary \ref{cor}, we have $CQ_\ell = \Lambda_\ell C$, we get:
\[
B\Gamma Q_\ell = \Lambda_\ell B \Gamma \Longrightarrow B J_\ell \Gamma = \Lambda_\ell B \Gamma \Longrightarrow (BJ_\ell - \Lambda_\ell B) \Gamma =0.\]
Therefore, the rows of $BJ_\ell - \Lambda_\ell B$ belong to the left null space of $\Gamma$. Hence, there exist matrices $Y_1,\dots,Y_s$ such that $BJ_\ell-\Lambda_\ell B=Y_\ell D$, $\ell=1,\dots,s$. By rearranging we get the required expression.
\end{proof}
 
\paragraph{Proof of Lemma \ref{mainlemmaB}}

Using the previous definitions, we are interested in bounding the following expression:
\[
\mu_{B,\Imgamma}(\Gamma K) := \limsup_{h\to 0^+} \frac{ \|I+h \Gamma K\|_{B,\Imgamma} -1 } h, 
\]
where $K\in \mathcal K_\Ns$.
We proceed as follows:
\begin{align*}
 \left \|   I + h \Gamma K    \right \|_{B,\Imgamma} & = \sup_{\substack{\|Bz\|_\infty =1 \\ z\in\Imgamma }} \left \| B \left ( I + h\Gamma K  \right ) z \right \|_\infty
 \mathop{=}^{(\star)}   \sup_{\substack{\|Bz\|_\infty =1 \\ z\in\Imgamma }} \left \| Bz + h   \sum_{\ell=1}^s \rho_\ell B J_\ell z   \right \|_\infty
 \\& \mathop{=}^{(\clubsuit)}   \sup_{\substack{\|Bz\|_\infty =1 \\ z\in\Imgamma }} \left \| Bz + h   \sum_{\ell=1}^s \rho_\ell(\Lambda_\ell B+Y_\ell D)z   \right \|_\infty \mathop{=}^{(\spadesuit)}   \sup_{\substack{\|Bz\|_\infty =1 \\ z\in\Imgamma }} \left \| \left ( I + h   \sum_{\ell=1}^s \rho_\ell \Lambda_\ell\right ) Bz   \right \|_\infty \\
 \\ &  \le      \sup_{\substack{\|Bz\|_\infty =1 \\ z\in\Imgamma }} \left \| I + h   \sum_{\ell=1}^s \rho_\ell   \Lambda_{\ell}     \right \|_\infty \|B z \|_\infty = \left \| I + h   \sum_{\ell=1}^s \rho_\ell   \Lambda_{\ell}     \right \|_\infty,
\end{align*}
where the equality $(\star)$ follows from \eqref{e.decomp}, equality $(\clubsuit)$ follows from Lemma \ref{simplelemma}, and equality $(\spadesuit)$ follows since $Dz=0$ for all $z\in\Imgamma$.
 
Therefore, the expression of the logarithmic norm above can be written as:
\begin{equation}\label{e.lognorm_ineq2}  \mu_{B,\Imgamma}(\Gamma K)  \le \mu_\infty \left ( \sum_{\ell=1}^s \rho_\ell   \Lambda_{\ell} \right ) \le   \sum_{\ell=1}^s \rho_\ell  \mu_\infty( \Lambda_{\ell}) = 0, 
\end{equation}
where the inequalities follow by the subadditivity of the logarithmic norm and Corollary \ref{cor}.  \hfill $\square$
 
\paragraph{Proof of Theorem \ref{th.mainB}} 

This follows immediately from combining Theorem \ref{th.contraction} with Lemma \ref{mainlemmaB} since the Jacobian of \eqref{e.ode} can always be written as $\Gamma K$ for some $K\in\mathcal K_\Ns$ as in \eqref{e.dR}.  \hfill $\square$

\subsubsection{Boundedness}
 
A BIN that is not conservative is not guaranteed to have bounded trajectories. 
In fact, nonexpansivity does not preclude unboundedness. Nevertheless, if a network admits a steady state then we get boundedness as the following corollary shows. 
 
\begin{definition}
Let $\Ns$ be a given a network, and let the dynamical system \eqref{e.ode} be given with a fixed kinetics $R$. Fix a stoichiometric class $\Cs$. Then, the dynamical system \eqref{e.ode} is said to be uniformly bounded over $\Cs$ if for each compact set $K \subset \Cs$, there exists a compact set $\tl K$ such that $\varphi(t;K) \subset \tl K$ for all $t\ge 0$. 
\end{definition}
 
If a GLF exists, then checking uniform boundedness is easy as the following corollary shows.
\begin{corollary}\label{cor.boundedness}
Let $\Ns=(\Ss,\Rs)$ be a network that admits a GLF $\tV(r)=\|B\Gamma r\|_\infty$. Consider \eqref{e.ode} with a given kinetics $R$. Then, if a steady state $\bx$ exists, the dynamical system \eqref{e.ode} is uniformly bounded over $\Cs_\bx$. 
\end{corollary}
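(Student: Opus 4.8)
The plan is to use the dual GLF $\hV(z) = \|Bz\|_\infty$ as a radially unbounded Lyapunov function on the stoichiometric class $\Cs_\bx$, and combine its non-increasing property with the fact that $\Cs_\bx$ is a closed subset of $\mathbb R_{\ge0}^n$ to trap trajectories in sublevel sets. First I would invoke Corollary~\ref{th.equiv_piecewise}, which tells us that since $\tV(r)=\|B\Gamma r\|_\infty$ is a GLF, $\hV(z)=\|Bz\|_\infty$ is a dual GLF; then Corollary~\ref{cor.dualGLF_dec} gives that $V(x) := \hV(x-\bx) = \|B(x-\bx)\|_\infty$ is a Lyapunov function for \eqref{e.ode} on $\Cs_\bx$, so $V(x(t))$ is non-increasing along any trajectory starting in $\Cs_\bx$. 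Equivalently, by Theorem~\ref{th.mainB}, $|x(t)|_B = \|B(x(t)-\bx)\|_\infty$ is non-increasing (taking $x_2(\cdot)\equiv\bx$, which is a valid constant solution since $\bx$ is a steady state).

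The key step is to show that the sublevel sets $\Omega_c := \{x \in \Cs_\bx : \|B(x-\bx)\|_\infty \le c\}$ are compact. By Theorem~\ref{th.mainB}, $|\cdot|_B$ is a genuine \emph{norm} on $\Imgamma$ — this uses precisely the rank condition $\rank\Gamma = \rank(B\Gamma)$ baked into the definition of the GLF $\tV(r)=\|B\Gamma r\|_\infty$ (via Lemma~\ref{lem.linalg}, $\ker B|_{\Imgamma}=\{0\}$). Since $x - \bx \in \Imgamma$ for all $x \in \Cs_\bx$, and $|\cdot|_B$ is a norm on the finite-dimensional space $\Imgamma$, the set $\{z \in \Imgamma : \|Bz\|_\infty \le c\}$ is closed and bounded, hence compact; intersecting with the closed set $\mathbb R_{\ge0}^n$ and translating by $\bx$ keeps $\Omega_c$ compact. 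Now given any compact $K \subset \Cs_\bx$, set $c := \max_{x\in K} \|B(x-\bx)\|_\infty < \infty$; then $K \subseteq \Omega_c$, and by the non-increasing property $\varphi(t,x) \in \Omega_c$ for all $x\in K$ and all $t\ge 0$, so $\varphi(t,K)\subseteq \Omega_c =: \tl K$, establishing uniform boundedness over $\Cs_\bx$.

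The only subtle point — and the one I would be most careful about — is the norm property of $|\cdot|_B$ on $\Imgamma$: without it, $\Omega_c$ could be unbounded (it would only be a cylinder over the kernel direction), and the argument collapses. But this is exactly what the hypothesis that $\tV(r)=\|B\Gamma r\|_\infty$ is a GLF buys us, since Corollary~\ref{cor} (or Theorem~\ref{th.metzler}) forces $\ker(B\Gamma)=\ker\Gamma$, equivalently $\ker B|_{\Imgamma}=\{0\}$ by Lemma~\ref{lem.linalg}. A secondary point worth a line is that $\bx$ being a steady state makes the constant function $x(t)\equiv\bx$ an admissible second trajectory in Theorem~\ref{th.mainB} (or, directly, $\dot V(\bx)\le 0$ trivially and more to the point $V$ attains its minimum $0$ at $\bx$), which is what legitimizes comparing $|x(t)|_B$ against a fixed bound rather than merely comparing two moving trajectories.
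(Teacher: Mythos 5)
Your proof is correct and follows essentially the same route as the paper's: compare an arbitrary trajectory against the constant trajectory $x_2(t)\equiv\bx$ via Theorem~\ref{th.mainB} and trap it in a sublevel set $\{x\in\Cs_\bx : |x-\bx|_B\le c\}$. The one point you handle more carefully than the paper is the compactness of these sublevel sets (via $\ker B|_{\Imgamma}=\{0\}$, so that $|\cdot|_B$ is a genuine norm on $\Imgamma$ and the sets are bounded); the paper's proof leaves this implicit, but it is needed since uniform boundedness requires a \emph{compact} trapping set.
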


\begin{proof}
Denote 
\begin{equation}\label{e.K_tilde} 
\tl K_M:=\{x :  |x-\bx|_B \le M \} \cap \Cs_\bx. \end{equation} 
Pick any nonempty compact set $K\subset \Cs_\bx$. Then, let $M$ be the minimal number such that $K\subset\tl K_M$. Then, 
we apply Theorem \ref{th.mainB} by comparing an arbitrary trajectory $x_1(t)$ starting in $\tl K_M$ with $x_2(t)\equiv \bx$. By nonexpansiveness, $x_1(t) \in \tl K_M$ for all $t\ge0 $.
\end{proof}

\subsubsection{Remark on global convergence to steady states} \label{s.global}

If a network admits a GLF,  global stability can be checked by a graphical LaSalle argument \cite{PWLRj}. Alternatively,	it has been shown \cite{blanchini17} that    the existence of a non-degenerate positive steady state  implies global asymptotic stability. Non-degeneracy of the Jacobian can be tested easily as shown in \cite{blanchini14,MA_LEARN}, and is implemented in the computational package \texttt{LEARN}.  However, Corollary \ref{cor.boundedness} does not assume non-degeneracy to show boundedness. 
 
Recent work in contraction theory \cite{jafarpour21,duvall24} has also provided results on the question of convergence under the assumption of {nonexpansivity}. As has been stated originally in \cite{jafarpour21} and then in \cite[Theorem 2]{duvall24}, nonexpansivity with respect to a polyhe{dr}al norm coupled with analyticity of the vector field (which applies if all reaction rates are real-analytic functions, as is the case with mass-action and most other kinetics) implies that each trajectory is either unbounded or converges to the set of steady states.

Combining the latter result with Corollary \ref{cor.boundedness} means that the existence of a single steady state implies bound{ed}ness, and hence convergence to the set of steady states.  However, this result does not guarantee the uniqueness of a steady state in a stoichiometric class. For that and to get global stability, we still need robust non-degeneracy of the Jacobian, see for instance \cite[Theorem 19,iii]{jafarpour21}.

 \subsubsection{Alternative statement of nonexpansivity}
 The results can be stated explicitly in the stoichiometric class by using a transformation. We provide the details below for completeness.  Recall that $D$ is a matrix whose rows form a basis for the left null space of $\Gamma$. Hence, let  $S=[S_1^T,D^T]^T$ where $S_1$ is any matrix chosen so $S$ is invertible.  Consider \eqref{e.ode} with a fixed vector $\bx$. We will use the following coordinate change:   \[ \tl z= S(x-\bx)=\begin{bmatrix} S_1 (x-\bx) \\ D(x-\bx) \end{bmatrix}=:\begin{bmatrix}\tl z_1 \\ \tl z_2 \end{bmatrix}. \] 
 The dynamics of \eqref{e.ode} can be written as:
 \[\dot{\tl z}= \begin{bmatrix} S_1 \\ D \end{bmatrix} 
 \Gamma R(x)= \begin{bmatrix} S_1  \Gamma R(S^{-1} \tl z+\bx) \\ 0 \end{bmatrix}, \]
 where the second entry is zero since $D\Gamma=0$. 
 
 Since we have $x(0)\in \Cs_{\bx}$, then $x(t)-{\bx} \in \Imgamma$ for all $t$, and we immediately get that $\tl z_2(t) = D(x-{\bx})=0$ for all $t$. Furthermore, let us write $S^{-1}=: [ U_1,U_2]$, where $U_1 \in \mathbb R^{n\times(n-d)},{U}_2\in\mathbb R^{n\times d}$, $d=\dim(\ker(\Gamma^T))$.
 
 Therefore,
 
 \[S^{-1} \tl z + \bx = [U_1,U_2]\begin{bmatrix}\tl z_1 \\ 0 \end{bmatrix} + \bx = U_1 \tl z_1 + \bx .\]
 
 Finally we get the dynamics on the $n-d$ manifold as:
 \begin{equation}\label{e.red_Sys} { \dot{\tilde{z}}_1 = S_1 \Gamma R(U_1 \tl z_1 + \bx ), ~\tl z_1(0)~\mbox{satisfies}~U_1 \tl z_1(0) + \bx\ge0 .}\end{equation}
 In the new coordinates, the problem reduces to checking contraction of \eqref{e.red_Sys} with respect to the norm $\|.\|_{BU_1}: \tl z_1 \mapsto \|B U_1 \tl z_1 \|_{\infty}$.  Hence, we state the following corollary.
 \begin{corollary}
 Let a network $\Ns$ be given.	Let $S,S_1,U_1$ be matrices defined as above. Consider the ODE \eqref{e.red_Sys} whose Jacobian is $S_1  \Gamma \frac{\partial R}{\partial x} U_1$. Consider the norm $\|.\|_{B{U}_1}: \hat z_1 \mapsto \|BU_1\hat z_1\|_\infty$ and the associated logarithmic norm $\mu_{B{U}_1}$. Then, $\mu_{B{U}_1}(S_1 \Gamma \frac{\partial R}{\partial x}  U_1 )=\mu_{B,\Imgamma}(  \Gamma  \frac{\partial R}{\partial x}  ) \le 0$. In other words, the logarithmic norm of the reduced system is independent of the choice of $S_1$ and is non-positive. In addition,  consider any two solutions $\hat\varphi(t;{\hat z}_{1a}),\hat\varphi(t;\hat z_{1b})$ of \eqref{e.red_Sys}, then the following condition holds:
 \[ |\hat\varphi(t;\hat z_{1a})-\hat\varphi(t;\hat z_{1{b}})|_{B{U}_1} \le |\hat z_{1a}-\hat z_{1b}|_{B{U}_1},~\mbox{for all}~t\ge0.\]
 \end{corollary}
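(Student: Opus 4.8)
The plan is to recognize that \eqref{e.red_Sys} is just \eqref{e.ode} restricted to $\Cs_\bx$ and rewritten in coordinates by a linear \emph{isometry}, so that the statement reduces to the invariance of induced matrix norms and logarithmic norms under isometric conjugation, combined with Lemma~\ref{mainlemmaB}. First I would pin down the linear algebra. Since $S=[S_1^T,D^T]^T$ is invertible with $S^{-1}=[U_1,U_2]$, the identity $SS^{-1}=I$ gives $S_1U_1=I_{n-d}$ and $DU_1=0$. Because the rows of $D$ form a basis of the left null space of $\Gamma$, i.e.\ of $(\Imgamma)^\perp$, the relation $DU_1=0$ forces the columns of $U_1$ to lie in $\Imgamma$; being linearly independent (from $S_1U_1=I$) and $n-d=\dim\Imgamma$ in number, they form a basis of $\Imgamma$, so $U_1:\mathbb R^{n-d}\to\Imgamma$ is a linear isomorphism whose inverse is the restriction $S_1|_{\Imgamma}$ (indeed $S_1U_1=I$, and for $z\in\Imgamma$, $z=U_1(S_1z)$ by uniqueness of coordinates).

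Next I would check that $U_1$ is an isometry of $(\mathbb R^{n-d},|\cdot|_{BU_1})$ onto $(\Imgamma,|\cdot|_B)$, which is immediate: $|U_1\hat z_1|_B=\|BU_1\hat z_1\|_\infty=|\hat z_1|_{BU_1}$. To know these are genuine norms, note that $B$ comes from the GLF $\tV(r)=\|B\Gamma r\|_\infty$, so $\rank\Gamma=\rank(B\Gamma)$ by Corollary~\ref{cor}; Lemma~\ref{lem.linalg} then gives $\ker B|_{\Imgamma}=\{0\}$, Lemma~\ref{lem.normB} makes $(\Imgamma,|\cdot|_B)$ a normed space, and injectivity of $U_1$ transfers this to $(\mathbb R^{n-d},|\cdot|_{BU_1})$.

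The crux is the conjugation identity. For any state $x=U_1\hat z_1+\bx$ the Jacobian $\Gamma\,\partial R/\partial x(x)$ maps $\mathbb R^n$ into $\Imgamma$, hence restricts to an operator $A$ on $\Imgamma$, and $S_1\,\Gamma\,(\partial R/\partial x)(x)\,U_1 = S_1|_{\Imgamma}\circ A\circ U_1 = U_1^{-1}A\,U_1$ as operators on $\mathbb R^{n-d}$. Since $U_1^{-1}(I+hA)U_1 = I+h\,U_1^{-1}AU_1$ and $U_1$ is an isometry, $\|I+h\,U_1^{-1}AU_1\|_{BU_1}=\|I+hA\|_{B,\Imgamma}$ for all $h>0$; plugging into the limit \eqref{e.lognorm} defining the (restricted) logarithmic norm yields $\mu_{BU_1}(S_1\Gamma\tfrac{\partial R}{\partial x}U_1)=\mu_{B,\Imgamma}(\Gamma\tfrac{\partial R}{\partial x})$, in particular independently of the choice of $S_1$. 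Lemma~\ref{mainlemmaB} bounds the right-hand side by $\mu_\infty(\sum_\ell\rho_\ell\Lambda_\ell)\le0$; the only wrinkle is that at boundary points $\partial R/\partial x\in\overline{\mathcal K_\Ns}$ rather than $\mathcal K_\Ns$, so I would note that the decomposition \eqref{e.decomp} and the chain of estimates in the proof of Lemma~\ref{mainlemmaB} hold verbatim for $\rho_\ell\ge0$, extending the bound to all admissible states.

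Finally, for the trajectory estimate I would either transport the conclusion of Theorem~\ref{th.mainB} through the isometry — for reduced solutions $\hat z_{1a},\hat z_{1b}$ one has $|\hat z_{1a}(t)-\hat z_{1b}(t)|_{BU_1}=\|BU_1(\hat z_{1a}(t)-\hat z_{1b}(t))\|_\infty=|x_a(t)-x_b(t)|_B$ with $x_\bullet=U_1\hat z_{1\bullet}+\bx$ — or apply Theorem~\ref{th.contraction} directly to \eqref{e.red_Sys} on the convex polyhedron $\{\hat z_1:U_1\hat z_1+\bx\ge0\}$ with $c=0$. I do not expect a genuine obstacle here; the only real work is the bookkeeping that identifies $U_1$ with the isomorphism $\mathbb R^{n-d}\cong\Imgamma$ and $|\cdot|_{BU_1}$ with the pullback of $|\cdot|_B$, after which invariance of logarithmic norms under isometric conjugation together with Lemma~\ref{mainlemmaB} finishes the argument.
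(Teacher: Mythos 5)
Your proposal is correct and follows essentially the same route the paper intends: the paper offers no separate proof of this corollary, relying implicitly on the preceding coordinate-change computation together with Lemma~\ref{mainlemmaB} and Theorem~\ref{th.mainB}, which is exactly what you make explicit by identifying $U_1$ as an isometric isomorphism $(\mathbb R^{n-d},|\cdot|_{BU_1})\to(\Imgamma,|\cdot|_B)$ and conjugating the Jacobian through it. Your added care about the boundary case $\partial R/\partial x\in\overline{\mathcal K_\Ns}$ (with $\rho_\ell\ge0$) is a correct and worthwhile detail that the paper glosses over.
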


 \subsection{Nonexpansion of the extent-of-reaction system}
 We can perform  a similar contraction analysis for the extent-of-reaction system, but thanks to Theorem \ref{bisimulation}, we can use Theorem \ref{th.mainB} to show the following immediately:
 \begin{theorem}\label{th.mainC}
 Let $\Ns=(\Ss,\Rs)$ be a given network, and let {$\tV(r) = \|C r\|_\infty$  be a GLF.}
 Let $\bx \in \mathbb R_{\ge 0}^n$ be any positive state, and define $|{\xi}|_C := \|C\xi\|_\infty$.
 Then,  
 for any two solutions $\xi_1(\cdot)$ and $\xi_2(\cdot)$ of $\dot\xi=R(\bx+\Gamma \xi)$, we have
 \[
 |\xi_1(t)-\xi_2(t)|_C \;\leq\; |\xi_1(0)-\xi_2(0)|_C \quad \mbox{for all}\; t\geq0.
 \]
\end{theorem}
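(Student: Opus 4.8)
The plan is to read off the estimate from the concentration-coordinate result, Theorem~\ref{th.mainB}, via the bisimulation of Theorem~\ref{bisimulation}, so that no new logarithmic-norm computation is needed. First I would invoke Corollary~\ref{cor} (equivalently Corollary~\ref{th.equiv_piecewise}): since $\tV(r)=\|Cr\|_\infty$ is a GLF, there is a matrix $B$ with $\rank(B\Gamma)=\rank\Gamma$ and $C=B\Gamma$. Then $\tV(r)=\|B\Gamma r\|_\infty$ is a GLF of precisely the form hypothesized in Theorem~\ref{th.mainB}, and by Lemmas~\ref{lem.linalg} and~\ref{lem.normB} the functional $|x|_B:=\|B(x-\bx)\|_\infty$ is a genuine norm on $\Cs_\bx:=(\bx+\Imgamma)\cap\mathbb R_{\ge0}^n$.

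Next, fix $\bx$ and let $\xi_1(\cdot),\xi_2(\cdot)$ be solutions of $\dot\xi=R(\bx+\Gamma\xi)$. Setting $x_i(t):=\bx+\Gamma\xi_i(t)$, a one-line differentiation gives $\dot x_i=\Gamma\dot\xi_i=\Gamma R(\bx+\Gamma\xi_i)=\Gamma R(x_i)$, so each $x_i(\cdot)$ solves $\dot x=\Gamma R(x)$; moreover $x_i(t)\in\Cs_\bx$ because $x_i(t)\in\bx+\Imgamma$ and $x_i(t)\ge0$ (the $\xi$-ODE is only defined where $\bx+\Gamma\xi\ge0$). This is exactly the solution correspondence $\Phi\circ\psi_t=\varphi_t$ recorded just before Theorem~\ref{bisimulation} (equivalently, integrating \eqref{rel_x_xi}). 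Since $C=B\Gamma$ and $\Gamma(\xi_1-\xi_2)=(x_1-\bx)-(x_2-\bx)=x_1-x_2$, we obtain the bridge identity
\[
|\xi_1(t)-\xi_2(t)|_C=\|C(\xi_1(t)-\xi_2(t))\|_\infty=\|B\Gamma(\xi_1(t)-\xi_2(t))\|_\infty=\|B(x_1(t)-x_2(t))\|_\infty=|x_1(t)-x_2(t)|_B.
\]
Applying Theorem~\ref{th.mainB} to $x_1(\cdot),x_2(\cdot)$ yields $|x_1(t)-x_2(t)|_B\le|x_1(0)-x_2(0)|_B$ for all $t\ge0$, and evaluating the bridge identity at $t$ and at $0$ turns this into $|\xi_1(t)-\xi_2(t)|_C\le|\xi_1(0)-\xi_2(0)|_C$, as claimed.

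I do not expect a real obstacle: the substantive estimate --- non-positivity of the relevant logarithmic norm of the Jacobian --- was already established in Lemma~\ref{mainlemmaB}, and the rest is bookkeeping. The two points that merit an explicit remark are that $|\cdot|_C$ is only a seminorm on $\mathbb R^\nu$ (it vanishes precisely on $\ker C=\kerGamma$ and descends to a norm on $\mathbb R^\nu/\kerGamma$), which is harmless for the stated inequality, and that the image of each $\xi$-solution under $\xi\mapsto\bx+\Gamma\xi$ indeed lands in the intended stoichiometric class. As an independent check one could instead mimic Lemma~\ref{mainlemmaB} directly on $\mathbb R^\nu/\kerGamma$: the Jacobian of the $\xi$-system is $\frac{\partial R}{\partial x}(\bx+\Gamma\xi)\,\Gamma=\sum_{\ell=1}^s\rho_\ell Q_\ell$ with $Q_\ell=e_{j_\ell}\gamma_{i_\ell}^T$, and the identities $CQ_\ell=\Lambda_\ell C$, $\mu_\infty(\Lambda_\ell)\le0$ from Corollary~\ref{cor} force the corresponding logarithmic norm to be at most $\mu_\infty(\sum_\ell\rho_\ell\Lambda_\ell)\le0$; but the bisimulation route above is the shorter one.
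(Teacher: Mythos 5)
Your proposal is correct and follows essentially the same route as the paper: both pass through the bisimulation $x_i(t)=\bx+\Gamma\xi_i(t)$, use $C=B\Gamma$ to identify $|\xi_1(t)-\xi_2(t)|_C$ with $\|B(x_1(t)-x_2(t))\|_\infty$, and then invoke Theorem~\ref{th.mainB} for the nonexpansivity estimate. Your explicit remarks on $|\cdot|_C$ being only a seminorm on $\mathbb R^\nu$ and on the $x_i$ landing in $\Cs_\bx$ are careful additions that the paper leaves implicit, but the argument is the same.
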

\begin{proof}
With reference to the commutative diagram in Theorem \ref{bisimulation} we have $x_1(t)=\Gamma \xi_1(t) + \bx $, and $x_2(t)=\Gamma \xi_2(t) + \bx$. Note that $x_1(t),x_2(t)\in\Cs_\bx$. Then, we can write the following:
\begin{align*}
|\xi_1(t)-\xi_2(t) |_C &= \| C ( \Psi(x_2(t))-\Psi(x_1(t))  ) \|_\infty = \| B\Gamma ( G(x_2(t)-\bx) - G(x_1(t)-\bx)  ) \|_\infty \\ 
& = \| B ( x_2(t)-\bx -x_1(t) + \bx ) \|_\infty = \| B(x_2(t)-x_1(t) ) \|_\infty \\
& \le \| B(x_2(0) - x_1(0))\|_\infty = \| B\Gamma ( \xi_2(0)- \xi_1(0)) \|_\infty = |x_2(0)-x_1(0)|_C,
\end{align*} 
where the first equality uses the commutativity diagram in Theorem \ref{bisimulation}, while the second equality uses \eqref{Psi}, the third equality uses the property that $\Gamma G(z)=z$ for all $z\in \Imgamma$, and the inequality follows from Theorem \ref{th.mainB}.
\end{proof}

\subsection{Examples and discussion}

\mybox{\subsubsection{The PTM cycle (cont'd)}
Let us consider again the PTM cycle \eqref{e.ptm}.  Using Theorem \ref{th.mainB} the network is nonexpansive with respect to the norm $|.|_B$ in each stoichiometric class, where $B$ is given by \eqref{Bptm}. 
Figure \ref{f.ptm_timetraj} shows a numerical simulation of the distance between pairs of trajectories  where it can be seen that they are approaching each other. In the next section, we will show that the system is strictly contractive over any compact set in the positive orthant. }

\begin{figure}
\centering

\subfigure[]{\includegraphics[width=0.496\textwidth]{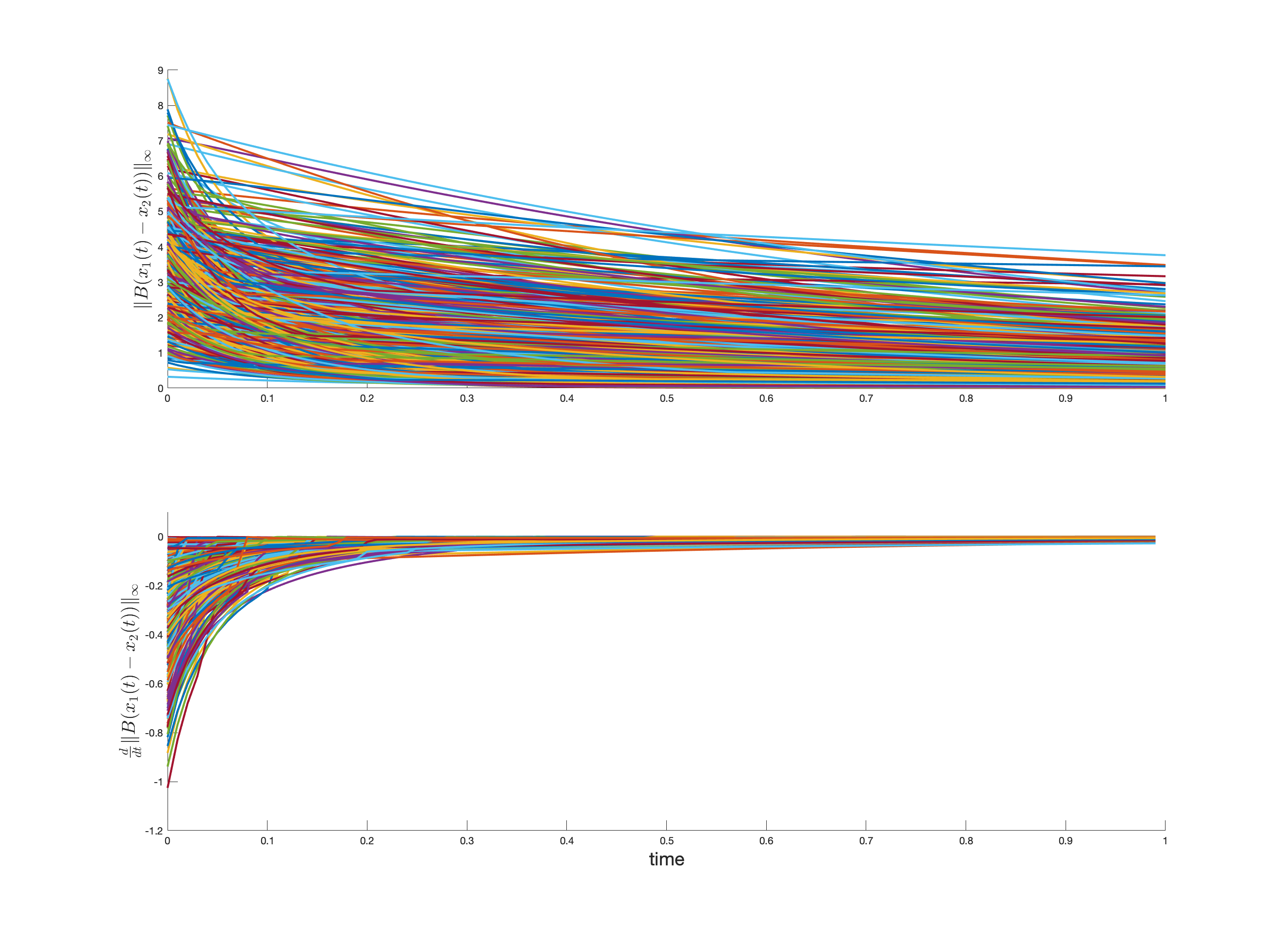}}
\subfigure[]{\includegraphics[width=0.496\textwidth]{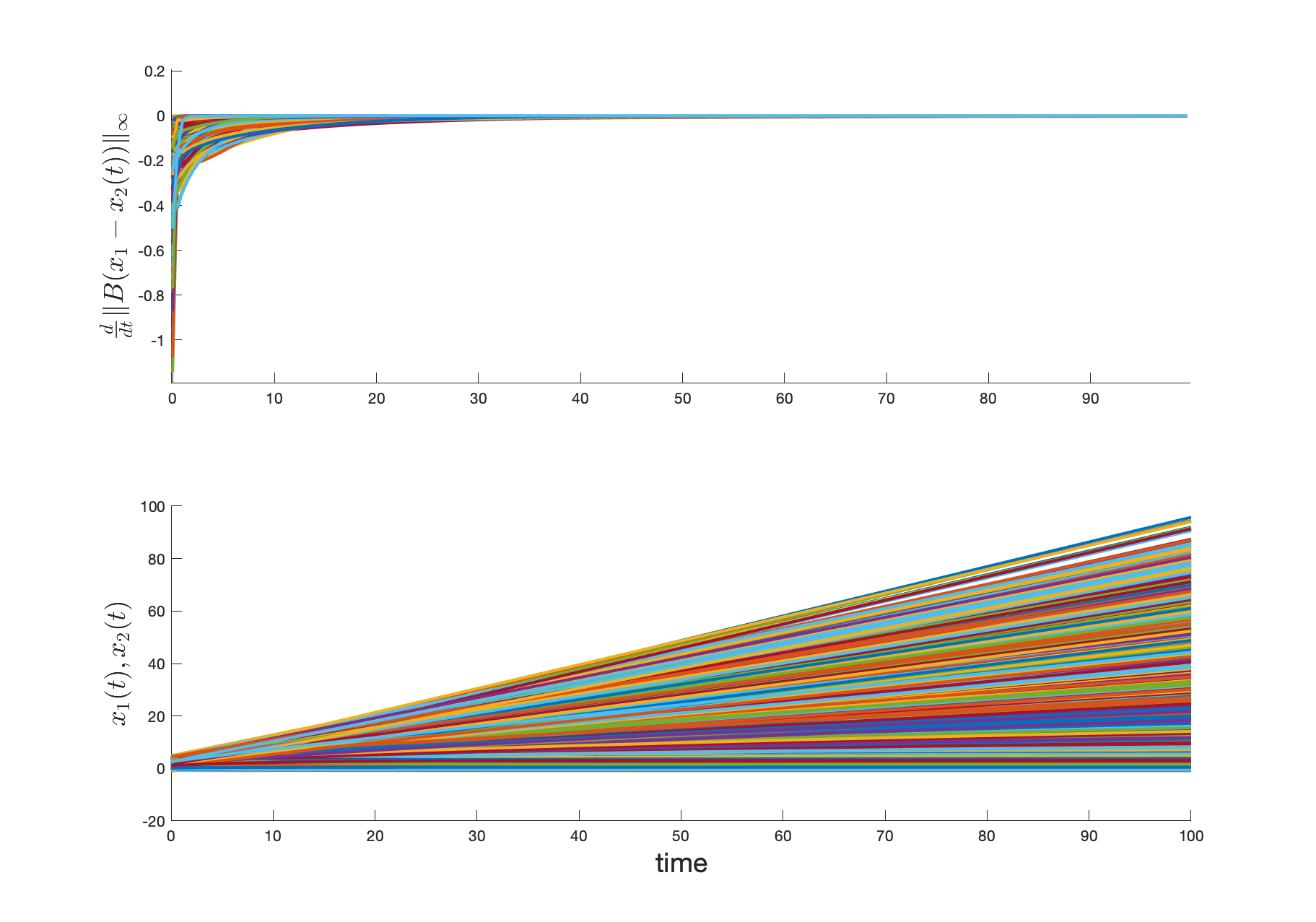}}
\caption{Nonexpansivity for two examples (a) The $B$-distance between 500 pairs of randomly generated trajectories for the PTM system with $R(x)=[se, c_1,pd , c_2]^T$, and its numerically-calculated time-derivative using MATLAB's \texttt{diff} subroutine. (b) An unbounded nonexpansive system. The time-derivative of the $B$-distance between 500 pairs of randomly generated trajectories for the  system \eqref{e.unstable} with $R(x)=[c,1,ab]^T$, and a plot of the unbounded trajectories. }
\label{f.ptm_timetraj}
\end{figure}

\mybox{\subsubsection{An unstable system}
{N}onexpansivity does not imply boundedness. For example, let us consider an example studied in \cite{PWLRj}:
\begin{equation} \label{e.unstable} C \longrightarrow A, ~ \emptyset \longrightarrow B, ~ A+B \longrightarrow C.\end{equation}
This system admits the GLF $\tV(r)=\|B\Gamma r\|_\infty$, where 
\[B=\begin{bmatrix} 1 & 0 & 0 \\ 0 & 1 & 0 \\ 1 & -1 & 0 \end{bmatrix}. \]
Note, however, that the trajectories of the system can become unbounded when a steady state fails to exists. Figure \ref{f.ptm_timetraj}(b) shows unbounded trajectories. Nevertheless, the distance between the trajectories never increases.}

\section{Strict contraction over positive compact sets}

In the previous section we have shown that any network admitting a GLF is nonexpansive. In this section, we will strengthen this to show strict contraction over arbitrary compact sets subject to additional computationally-verifiable conditions. 

\subsection{Background: siphons and persistence}
We revisit the concept of siphons, which will assist us in establishing strict contraction over compact sets.
A dynamical system defined over the positive orthant is said to be persistent if trajectories that start in the interior of the positive orthant do not approach the boundary asymptotically \cite{angeli07p}. This is also known as non-extinction since it is tantamount to asking that {the species that} are initially present in the BIN do not go extinct. 

An approach that can be used to establish persistence is based on the theory of siphons \cite{angeli07p}. 

\begin{definition} \label{d.trivialSiphon} Let $\Ns=(\Ss,\Rs)$ be a network. A set $P\subset\mathscr S$ is said to be a siphon if {it} satisfies the following statement:
	$\forall X \in P$, if $X$ is a product of some reaction $\R_j$, then $\exists X'\in P$ such that $X'$ is a reactant of $\R_j$. In addition,
	a siphon is \emph{trivial} if it contains {the} support of a conservation law, and it is \textit{critical} otherwise.
\end{definition}

If a network contains only trivial siphons, then its trajectories that start in the positive orthant do not approach the boundary asymptotically for any choice of kinetics, which is known as structural persistence \cite{angeli07p}. In fact, a slightly stronger statement can be established. If we choose all our initial conditions from a compact set $K\subset \mathbb R_+^n$, then the corresponding trajectories are uniformly separated from the boundary. This is shown in the following result which is a modification of a corresponding result in \cite{MA_TAC22}:
\begin{lemma}
	\label{eventuallyK}
	Consider a network $\Ns$. Consider the corresponding dynamical system \eqref{e.ode} with given admissible kinetics $R$, and assume that it is uniformly bounded over a given a stoichiometric class $\Cs$. 
	Assume that all siphons are trivial.
	Then, 
	for any compact $K \subset \Cs \cap (0,+\infty)^n$, there exist a number $\ve>0$ and
	a convex and compact $\tilde{K}$  in $[\ve,+ \infty)^n$ such that
	$\varphi (t;K) {\subset} \tilde{K}$ for all $t \geq 0$.  
\end{lemma}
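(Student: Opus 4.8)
The plan is to combine the siphon-based persistence result of \cite{angeli07p} with the uniform boundedness hypothesis to upgrade ``trajectories do not approach the boundary asymptotically'' into a \emph{uniform} separation estimate valid for all initial conditions in the given compact set $K$. The starting point is the hypothesis of uniform boundedness over $\Cs$, which gives a single compact set $\tl K_0\subset\Cs$ with $\varphi(t,K)\subset \tl K_0$ for all $t\ge0$. We may also take $\tl K_0$ to be convex (replace it with its convex hull inside $\Cs$, which stays compact since $\Cs$ is convex and closed). So from now on everything happens inside the fixed compact convex set $\tl K_0$.

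First I would recall the precise consequence of the ``all siphons trivial'' assumption: by \cite{angeli07p}, for every trajectory with $x(0)\in(0,\infty)^n$ one has $\liminf_{t\to\infty}x_i(t)>0$ for each $i$, i.e.\ no omega-limit point lies on $\partial\mathbb R_{\ge0}^n$. The task is to make this uniform over $x(0)\in K$. The standard device (this is exactly the ``modification of a result in \cite{MA_TAC22}'' alluded to) is a compactness/contradiction argument on the space of initial conditions together with continuous dependence on initial data: suppose no such $\ve>0$ exists; then there are sequences $x^{(k)}(0)\in K$, times $t_k\ge0$, and indices $i_k$ with $x^{(k)}_{i_k}(t_k)\to0$. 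Passing to subsequences, fix $i_k\equiv i$, and $x^{(k)}(0)\to \bar x_0\in K$ (compactness of $K$). Two cases: if $t_k$ stays bounded, pass to a convergent subsequence $t_k\to\bar t$; by continuous dependence $\varphi(t_k,x^{(k)}(0))\to\varphi(\bar t,\bar x_0)$, whose $i$-th coordinate is $0$, contradicting that $\varphi(\bar t,\bar x_0)\in(0,\infty)^n$ (trajectories starting in the open orthant stay in it for all finite time, since \eqref{e.ode} is a positive system and AK2 prevents a coordinate from hitting $0$ in finite time along forward trajectories from the interior). If $t_k\to\infty$, one uses the attractor/limit-set structure: all the $\varphi(t,x^{(k)}(0))$ lie in the fixed compact $\tl K_0$, so one extracts a limiting trajectory (via the translation semiflow / a diagonal argument, using that the forward orbit closure of $\tl K_0$ is compact and invariant) that at some point has a zero coordinate and whose earlier history also lies in $\tl K_0\cap(0,\infty)^n$; invoking persistence of this limiting trajectory — or more directly, the fact from \cite{angeli07p} that the boundary-avoidance is uniform on the compact invariant set $\overline{\bigcup_{t\ge0}\varphi(t,\tl K_0)}$ because it contains no points whose omega-limit sets touch a nontrivial siphon face — yields the contradiction. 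Either way we obtain $\ve>0$ with $x_i(t)\ge\ve$ for all $i$, all $t\ge0$, all $x(0)\in K$.

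Having produced $\ve>0$, I would then set $\tl K:=\tl K_0\cap[\ve,\infty)^n$. By construction $\varphi(t,K)\subset\tl K$ for all $t\ge0$; $\tl K$ is compact (closed subset of the compact $\tl K_0$) and convex (intersection of the convex $\tl K_0$ with the convex set $[\ve,\infty)^n$); and $\tl K\subset[\ve,\infty)^n$ as required. Shrinking $\ve$ if necessary so that $\tl K$ is nonempty (it contains $\varphi(0,K)=K$, which is nonempty and in $(0,\infty)^n$, so any $\ve$ below $\min_{x\in K,i}x_i$ works and is also $\le$ the $\ve$ just obtained) completes the proof.

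\textbf{Main obstacle.} The delicate step is the case $t_k\to\infty$: one cannot simply invoke continuous dependence on a fixed finite interval, and must instead argue with limiting trajectories inside the compact invariant set $\overline{\bigcup_{t\ge0}\varphi(t,\tl K_0)}$ and transfer the siphon-based persistence to that limit set. This is where the uniform boundedness hypothesis is essential (it supplies the compact invariant set on which compactness arguments can run), and where one leans on the structural-persistence theorem of \cite{angeli07p} and its refinement in \cite{MA_TAC22} rather than re-deriving it. The remaining steps — convexity and compactness of $\tl K$, the positive-system/AK2 fact that interior trajectories stay interior for finite time — are routine.
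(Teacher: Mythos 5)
Your proposal is correct and follows essentially the same route as the paper: the paper's own proof simply invokes Lemma~5 of \cite{MA_TAC22} to obtain the uniform separation $\ve>0$ and a compact $\tilde K_1\subset[\ve,+\infty)^n$ absorbing $\varphi(t,K)$, and then convexifies by enclosing $\tilde K_1$ in the box $[\ve,x_1^*]\times\cdots\times[\ve,x_n^*]$, whereas you additionally sketch the compactness/contradiction argument underlying that cited lemma and convexify by intersecting the convex hull of the absorbing set with $[\ve,+\infty)^n$ --- the same construction in substance. No gaps to flag.
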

\begin{proof}
	Using \cite[Lemma 5]{MA_TAC22},  there exist $\ve>0$ and
	$\tilde{K_1}$ compact in $[\ve,+ \infty)^n$ such that
	$\varphi (t;K) {\subset} \tilde{K}_1$ for all $t \geq 0$.
	Our desired \emph{convex} $\wt K$ is any compact and convex set that satisfies $\wt K_1\subset \wt K \subset [\ve,\infty)^n$. One such set is the closed box $[\ve,x_1^*]\times \dots \times [\ve,x_n^*]$, where $x^* \in\wt K_1$ is the supremum point of $\wt K_1$.
\end{proof}

\begin{remark}
	Using Corollary \ref{cor.boundedness}, uniform boundedness follows for networks admitting a GLF whenever a steady state exists.
\end{remark}

\subsection{Strict contraction over positive compact sets} \label{s.strictcontraction}
Recall that the upper bound in \eqref{e.lognorm_ineq2} is $\mu_\infty(\sum_{\ell=1}^s \rho_\ell \Lambda_{\ell})$ which is written in term{s} of the partial derivatives of $R$. Therefore, {if we} can establish a \emph{lower bound} on the partial derivatives of $R$, then we can hope to show strict contraction. However, for networks that contain bimolecular reactions, this is not generally possible since the partial derivatives of $R$ can get arbitrarily small when we get close to the boundary. Hence, our technique will be to establish contraction with respect to \emph{compact sets}. If we assume the absence of trivial siphons, then Lemma \ref{eventuallyK} assures us that there is a uniform positive lower bound on all the states. We will show that this can be applied to few examples and then we state the general result.

\paragraph{Example 1: Three-body binding}  Unlike standard two-body binding, three-body binding \cite{douglass13} requires to a bridging molecule $B$ for the formation of a trimeric complex $\rm ABC$. The {BIN} can be written as follows:
\begin{align} \label{three_body}
	\rm A+ \rm B &\xrightleftharpoons[R_5]{R_1} {\rm AB}, ~ { \rm C+B} \xrightleftharpoons[R_6]{R_2} {\rm BC},\\
	{\rm A+BC } & \xrightleftharpoons[R_7]{R_3} {\rm ABC} \xrightleftharpoons[R_4]{R_8} {\rm C+AB}.
\end{align}
The network admits the GLF $\tV(r)=\|\Gamma r\|_\infty$, so $B=I$. The certificate matrices $\Lambda_\ell, \ell=1,\dots,12$ can be computed via Corollary \ref{cor}, and hence  the upper bound \eqref{e.lognorm_ineq2} can be computed explicitly, the details are provided in \S 6.2. For our purposes here,  \eqref{e.lognorm_ineq2} gives us:
\[\mu_{\scriptscriptstyle %
\infty,\Imgamma}(J)\le -\min\{\rho_{1}+\rho_{2},\rho_{3}+\rho_{4},\rho_{5}+\rho_{6}, \rho_{7}+\rho_{8},\rho_{9}+\rho_{10},\rho_{11}+\rho_{12}\}:=\rho(x) <0, \]
where the last inequality holds for all $x \in \mathbb R_+^n$ by assumption AK3.  Therefore, this establishes that the Jacobian has a strictly negative logarithmic norm over the positive orthant. However, this falls short of proving strict contraction.  Nevertheless, the situation can be remedied for compact sets by using Lemma \ref{eventuallyK}. So, let $K\subset \mathbb R_+^{n}$ be any positive compact set. By Lemma \ref{eventuallyK} there exists an $\ve>0$ and a convex and compact $\wt K$ such that $\varphi(t;K){\subset} \wt K \subset [\ve,\infty)^n$ for all $t\ge0$.
So let $c :=\min_{x\in\wt K} \rho(x) >0$. So if $x(0),y(0) \in K \subset \Cs_\bx$, then we get strict contraction
\[
|{x(t)-y(t)}|_\infty \;\leq\; e^{-ct} |{x(0)-y(0)}|_\infty \; \mbox{for all}\; t\geq0.
\]

\paragraph{Example 2: PTM cycle (continued)} 

Let us continue considering the PTM cycle. In order to bound the logarithmic norm using \eqref{e.lognorm_ineq2}, we need to examine the expression $\mu_\infty(\sum_{\ell=1}^s \rho_\ell(x) \Lambda_{\ell})$. By substituting the values of $\Lambda_\ell,\ell=1,\dots,6$ we get:  (the dependence on $x$ is dropped for notational simplicity)
\[  
\left[\begin{array}{cccccc} -\rho_{1}-\rho_{2}-\rho_{6} & 0 & -\rho_{6} & 0 & -\rho_{2} & 0\\ 0 & -\rho_{1}-\rho_{2}-\rho_{3} & 0 & 0 & \rho_{1} & 0\\ -\rho_{5} & -\rho_{4} & -\rho_{1}-\rho_{2}-\rho_{4}-\rho_{5} & \rho_{2} & 0 & -\rho_{1}\\ 0 & 0 & \rho_{3} & -\rho_{3}-\rho_{4}-\rho_{5} & \rho_{5} & 0\\ -\rho_{3} & 0 & 0 & \rho_{6} & -\rho_{3}-\rho_{6} & 0\\ 0 & 0 & 0 & 0 & \rho_{4} & -\rho_{4}-\rho_{5}-\rho_{6} \end{array}\right].
\]

Let $J$ be a square matrix.
Recall that $\mu_\infty(J)=\max_i \sigma_i(J)$, where $\sigma_i(J)=[J]_{ii}+\sum_{j\ne i} |{[}J_{ij}{]}|$. For the matrix above, note that $\sigma_{3}(\sum_{\ell=1}^s \rho_\ell \Lambda_\ell)$ and  $\sigma_{5}(\sum_{\ell=1}^s \rho_\ell \Lambda_\ell)$ are both \emph{identically} zero. Hence,  $\mu_\infty(\sum_{\ell=1}^s \rho_\ell \Lambda_{\ell})=0$ for all possible choices of $\rho_1,\dots,\rho_6$. This situation precludes using an argument similar to the previous example. Nevertheless, we can adapt a trick used in \cite{margaliot16} by using a non-singular diagonal matrix $P$ to define a scaled norm $|.|_{PB}$.  Hence, the equation $B\Gamma (\sum_{\ell} \rho_\ell Q_\ell ) = (\sum_{\ell} \rho_\ell \Lambda_\ell ) B \Gamma $, can be written as 
\begin{equation}
	PB\Gamma \left (\sum_{\ell} \rho_\ell Q_\ell \right ) = \left (\sum_{\ell} \rho_\ell P\Lambda_\ell P^{-1} \right )  B \Gamma.
\end{equation}
Hence, we can evaluate $\mu_\infty(\sum_{\ell=1}^s \rho_\ell P\Lambda_{\ell} P^{-1})$ for the scaled norm.
In our case,  let $P=\diag([1+\theta,1+\theta,1,1+\theta,1,1+\theta])$ for some small $\theta>0$. 
Then, we get
\begin{align*} \mu_\infty\left(\sum_{\ell=1}^s \rho_\ell P\Lambda_{\ell} P^{-1}\right)& = -\min\{\rho_1-\theta (\rho_2+\rho_6), \rho_3+\rho_2-\theta\rho_1, (\rho_1+\rho_2+\rho_4+\rho_5) \tfrac{\theta}{1+\theta}, \\ & \qquad \qquad \quad \rho_4- \theta(\rho_3+\rho_5), (\rho_3+\rho_6)\tfrac{\theta}{1+\theta}, \rho_5+\rho_6-\theta \rho_4   \}.\end{align*}

We are going to show that it is always possible to choose $\theta$ such that the expression above is negative over an appropriate set. So let $K$ be a positive compact set.  By Lemma \ref{eventuallyK} there exists an $\ve>0$ and a convex and compact $\wt K$ such that $\varphi(t;K){\subset} \wt K \subset [\ve,\infty)^n$ for all $t\ge0$.  Define the following constant:
\[ \bar\theta=\min_{x\in\wt K}\left\{ \frac{\rho_1}{\rho_2+\rho_6}, \frac{\rho_{3}+\rho_2}{\rho_1}, \frac{\rho_4}{\rho_3+\rho_5}, \frac{\rho_5+\rho_6}{\rho_4}\right\}>0, \]
which exists and is well-defined due to the positivity and compactness of $\wt K$. Therefore, for any choice of $\theta \in (0,\bar\theta)$, the logarithmic norm  $\mu_\infty\left(\sum_{\ell=1}^s \rho_\ell P\Lambda_{\ell} P^{-1}\right)$ is negative over $\wt K$, and is upper bounded by $-c$ for some $c>0$. Therefore,   if $x(0),y(0) \in K \subset \Cs_\bx$, then we get strict contraction
\[
|{x(t)-y(t)}|_{PB} \;\leq\; e^{-ct} |{x(0)-y(0)}|_{PB} \; \mbox{for all}\; t\geq0.
\]

\subsubsection{Weakly contractive networks: a structural notion} \label{s.weakcontraction}
\paragraph{Weakly contractive matrices} As seen in the last example, a matrix whose logarithmic norm is identically zero can be scaled using a diagonal transformation into a matrix with a \emph{negative} logarithmic norm. Here, we generalize the concept introduced in \cite{margaliot14} using the following definition:
\begin{definition}\label{def.weakcontract}
Let ${\Lambda} \in \mathbb R^{n\times n}$ be {a} matrix that satisfies $\sigma_i(\Lambda)\le 0, i=1,\dots,n$. Let
\[
S_-:= \{ i \in{1,\dots,n} | \sigma_i(\Lambda)<0  \}
\]
and
\[
S_0:= \{ i \in{1,\dots,n} | \sigma_i(\Lambda)=0  \} \,.
\]
If for each $i\in S_0$ there exists a series of indices $i,i_0,\dots,i_{k_i-1} \in S_0$ and $i_{k_i} \in S_-$ such that ${\Lambda}_{ii_0},{\Lambda}_{i_0i_1},\dots,{\Lambda}_{i_{k_i-1}i_{k_i}}>0$, then $\Lambda$ is said to be \emph{weakly contractive}. In addition, we call $k_i$ the nonexpansivity depth of index $i$. Then, let $S_{0k} \subset S_0$ be all the indices that have a nonexpansivity depth exactly $k$. Finally, $k:=\max_{i \in S_0} k_i$ is the maximal nonexpansivity depth of $\Lambda$.  If $S_0 = \emptyset$, then $k:=0$.
\end{definition}

Let $\theta>0$ and a weakly contractive $\Lambda$ be given. Then we define the \emph{contractor matrix} $P_\theta:=\prod_{{j}=1}^k P^{(j)}$, where $k$ is the maximal nonexpansivity depth of $\Lambda$. The matrices  $P^{({\lambda})}$ are diagonal matrices that are defined as follows. For ${j}=1$, then $p_{ii}^{(1)}=1+\theta$ if $i\in S_{-}$, and $p_{ii}^{(1)}=1$ otherwise.  Next, assume we have defined the matrices $P^{(1)},\dots,P^{(j-1)}$. To define $P^{(j)}$, we write $p_{ii}^{(j)}=1+\theta$ if $i\in S_{-} \cup S_{01} \cup \dots \cup S_{0(j-1)}$, and $p_{ii}^{(j)}=1$ otherwise.

Next, we have the following lemma:
\begin{lemma}\label{lem.contractor}
	Let $\Lambda \in \mathbb R^{n \times n}$ be weakly contractive with maximal nonexpansivity depth $k$. Then, $\exists \bar\theta>0$ such that  for all $ \theta \in (0,\bar\theta)$ there exists $c_\theta<0$ such that the corresponding  contractor matrix $P_\theta$ defined above satisfies $\mu_{\infty}(P_\theta JP_\theta^{-1})<c_\theta <0$.
	\end{lemma}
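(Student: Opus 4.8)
The plan is to prove that $\mu_\infty\!\big(P_\theta J P_\theta^{-1}\big)<0$ for every sufficiently small $\theta>0$; the lemma then follows at once by setting $c_\theta:=\tfrac12\,\mu_\infty\!\big(P_\theta J P_\theta^{-1}\big)$, which lies strictly between $\mu_\infty\!\big(P_\theta J P_\theta^{-1}\big)$ and $0$. Since $\mu_\infty(A)=\max_i\sigma_i(A)$, the whole task reduces to showing that every row sum $\sigma_i\!\big(P_\theta J P_\theta^{-1}\big)$ is strictly negative.

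First I would record the action of the diagonal conjugation. Because $P_\theta$ is positive diagonal, conjugation leaves the diagonal of $J$ unchanged and multiplies the $(i,l)$ off-diagonal entry by $(P_\theta)_{ii}/(P_\theta)_{ll}$; from the iterative definition $P_\theta=P^{(1)}\cdots P^{(k)}$ one reads off $(P_\theta)_{ii}=(1+\theta)^{\,k-d_i}$, where $d_i$ is the nonexpansivity depth of $i$ (with $d_i:=0$ for $i\in S_-$, so an index of depth $m$ gets scaled up exactly by the factors $P^{(m+1)},\dots,P^{(k)}$). Hence
\begin{align*}
\sigma_i\!\big(P_\theta J P_\theta^{-1}\big)
&= J_{ii}+\sum_{l\neq i}(1+\theta)^{\,d_l-d_i}\,|J_{il}| \\
&= \sigma_i(J)+\sum_{l\neq i}\big[(1+\theta)^{\,d_l-d_i}-1\big]\,|J_{il}| ,
\end{align*}
so $\theta\mapsto\sigma_i\!\big(P_\theta J P_\theta^{-1}\big)$ is continuous and reduces to $\sigma_i(J)$ at $\theta=0$. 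For the finitely many rows $i\in S_-$ we have $\sigma_i(J)<0$, so continuity yields a threshold $\bar\theta_-$ below which all of these rows stay strictly negative.

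The substance is in the rows $i\in S_0$, where $\sigma_i(J)=0$, and I would treat them by peeling off the factors of $P_\theta$ one at a time. Writing $Q_j:=P^{(1)}\cdots P^{(j)}$ and using that the diagonal matrices commute, $Q_jJQ_j^{-1}=P^{(j)}\,(Q_{j-1}JQ_{j-1}^{-1})\,(P^{(j)})^{-1}$, so passing from stage $j-1$ to stage $j$ multiplies, in the row of an index $i$ of depth $j$, every off-diagonal entry pointing into $S_-\cup S_{01}\cup\cdots\cup S_{0(j-1)}$ by $1/(1+\theta)<1$ and leaves all other entries (and the diagonal) unchanged. By the definition of weak contractivity, such an $i$ has a chain-neighbor $i_0$ with $d_{i_0}=j-1$ and $J_{ii_0}>0$, and $i_0$ lies in the rescaled set; therefore this stage strictly decreases $\sigma_i$, to a value at most $-\tfrac{\theta}{1+\theta}\,\big|(Q_{j-1}JQ_{j-1}^{-1})_{ii_0}\big|<0$. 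One then checks that no later factor $P^{(j')}$, $j'>j$, brings this row sum back up to $0$ — using that, along the chains, $i$ and all the vertices it interacts with eventually all sit inside the ``scaled-up'' index set, so the later conjugations act trivially on that row — and that the rows handled at earlier stages remain strictly negative. As there are only $k$ stages, a single $\theta$, taken below $\bar\theta_-$ and below the finitely many per-stage thresholds, works throughout; call this $\bar\theta$.

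The main obstacle is precisely the bookkeeping in this $S_0$ step: one must verify that the favourable $1/(1+\theta)$-rescaling of the chain-edge $J_{ii_0}$ genuinely dominates the possible unfavourable $(1+\theta)$-rescaling of row-$i$ entries that point to vertices of strictly greater depth, and that this domination holds uniformly over every row and survives all $k$ successive conjugations with one common $\theta$. This is where the full force of the ``weakly contractive'' hypothesis — in particular the depth ordering that the chains impose on the interaction graph — together with the precise iterative construction of $P_\theta$ is indispensable; a single naive diagonal scaling need not achieve a negative logarithmic norm here.
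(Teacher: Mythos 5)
Your computation of the scaled weights, $(P_\theta)_{ii}=(1+\theta)^{\,k-d_i}$, and of the conjugated row sums $\sigma_i(P_\theta JP_\theta^{-1})=J_{ii}+\sum_{l\neq i}(1+\theta)^{\,d_l-d_i}|J_{il}|$ is correct, and the continuity argument for the rows in $S_-$ is fine. The gap is exactly where you locate it, but the justification you offer does not close it. You claim that once row $i$ (of depth $j$) has been made strictly negative at stage $j$, ``the later conjugations act trivially on that row'' because $i$ and the vertices it interacts with eventually all lie in the scaled-up set. That is false: if $J_{il}\neq 0$ with $d_l=j'>j$, then at each of the stages $j+1,\dots,j'$ the index $i$ is already being inflated while $l$ is not, so the entry $(i,l)$ is multiplied by $(1+\theta)$ at every one of those stages. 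The cumulative damage to $\sigma_i$ is $\bigl[(1+\theta)^{d_l-d_i}-1\bigr]|J_{il}|=O(\theta)$, which is of exactly the same order as the favourable gain $-\tfrac{\theta}{1+\theta}\sum_{l:\,d_l<d_i}|J_{il}|$ earned at stage $j$; it therefore cannot be absorbed by shrinking $\theta$. To first order in $\theta$, negativity of $\sigma_i(P_\theta JP_\theta^{-1})$ requires $\sum_{l:\,d_l>d_i}(d_l-d_i)|J_{il}|<\sum_{l:\,d_l<d_i}(d_i-d_l)|J_{il}|$, and weak contractivity supplies no such bound. A concrete failure: for
\[
J=\begin{pmatrix}-1&0&0\\ 1&-(1+M)&M\\ 0&1&-1\end{pmatrix},\qquad M\ge 1,
\]
one has $S_-=\{1\}$, $S_{01}=\{2\}$, $S_{02}=\{3\}$ (so $J$ is weakly contractive with $k=2$), $P_\theta=\diag\bigl((1+\theta)^2,\,1+\theta,\,1\bigr)$, and $\sigma_2(P_\theta JP_\theta^{-1})=\theta\bigl(M-\tfrac{1}{1+\theta}\bigr)>0$ for every $\theta>0$.

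It is worth saying that you have put your finger on precisely the soft spot of the paper's own proof: in its induction step, the rows newly adjoined to $\tilde S_-$ are treated as if $\sigma_i(\tilde\Lambda)$ were a fixed negative number surviving an $O(\theta)$ perturbation, whereas for those rows $\sigma_i(\tilde\Lambda)$ is itself only $O(\theta)$ negative, so ``take $\theta$ small'' does not settle the comparison. The argument (yours and the paper's) does go through under an additional structural hypothesis — e.g.\ that every nonzero off-diagonal entry in a row of depth $j$ points to a column of depth $<j$, as happens in the paper's worked $5\times5$ example and whenever $k\le 1$ — because then every off-diagonal entry of row $i$ is multiplied by a factor $\le(1+\theta)^{-1}$ and $\sigma_i\le\sigma_i(J)-\tfrac{\theta}{1+\theta}\sum_{l\ne i}|J_{il}|<0$. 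As written, however, neither your sketch nor the paper's establishes the lemma in the stated generality, and your final paragraph, which candidly names the domination that must be verified, is describing a genuine missing hypothesis rather than a bookkeeping detail.
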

	\begin{proof} We will prove the following statement first: Let $P^*:=\prod_{j=1}^{\tilde k} P^{(j)}$ for some $\tilde k\le k$. Then $\exists c_\theta<0$ such that $\sigma_i(P^* \Lambda {P^*}^{-1})<c_\theta$ for $i \in S_- \cup S_{0j}, j \le \tilde k$ and $\sigma_i(P^* \Lambda {P^*}^{-1})=0$ otherwise. We start with $\tilde k=1$. Let $ P^*=P^{(1)}$.  Let $i \in S_{01}$, then  
		\[\sigma_i(P^* \Lambda {P^*}^{-1})= \Lambda_{ii}+ \sum_{j\ne i, j \not\in S_- } |\Lambda_{ij} |+ \sum_{j\in S_- } \frac 1{1+\theta }|\Lambda_{ij} |< \frac{-\theta}{1+\theta}  \sum_{j\in S_- } |\Lambda_{ij} | <0 \]
		for any choice {of} $\theta>0$. 
		 If $i\in S_{0j}, 1<j\le k$, then $\sigma_i(P^* \Lambda {P^*}^{-1})=0$.	If $i \in S_{-}$, then 
				\[\sigma_i(P^* \Lambda {P^*}^{-1})= \Lambda_{ii}+ \sum_{ j \in S_{0} } (1+\theta)   |\Lambda_{ij} |+ \sum_{j\ne i, j\in S_- } |\Lambda_{ij} |=  \underbrace{\sigma_i(\Lambda)}_{<0}+ \sum_{ j \in S_{0} } \theta   |\Lambda_{ij} | <0.\]
				Note exists a small enough $\bar\theta>0$ such that the last inequality holds  for all $i\in S_-$ and for all $\theta \in (0,\bar\theta)$. 
				Then, we define  $ c_\theta:= \max _{i \in S_-} \left (\sigma_i(\Lambda) + \sum_{ j \in S_{0} } \theta   |\Lambda_{ij} |\right)$.
				
			 For the induction step, assume that the statement holds for $\tilde k-1$. Then, the same argument used above can be applied to $\tilde \Lambda=\prod_{j=1}^{\tilde k-1} P^{(j)}  \Lambda \left (\prod_{j=1}^{\tilde k-1} P^{(j)}\right )^{-1} $ where it can be immediately seen that $\tilde S_-=S_- \cup S_{01} \cup \dots \cup S_{0(\tilde k-1)}$ and $S_0= S_{0(\tilde k)} \cup \dots \cup S_{0k}$.
		\end{proof}
	
\paragraph{Example} Consider the matrix 
\[ \Lambda =     \left[\begin{array}{rrrrr} -1 & 1 & 0 & 0 & 0\\ 0 & -1 & 1 & 0 & 0\\ 0 & 0 & -1 & 1 & 0\\ 0 & 0 & 1 & -2 & 0\\ 0 & 0 & 0 & 1 & -1 \end{array}\right].
 \] Note that $\mu_\infty(\Lambda)=0$. Then $S_0=\{1,2,3,5\}, S_-=\{4\}$. In addition, $\Lambda$ is weakly contractive since all indices are connected to index 4. Furthermore, $S_{01}=\{3,5\},S_{02}=\{2\},S_{03}=\{1\}$.  The corresponding contractor matrix can be written as follows:
 \[P^{{\theta}}=\begin{bmatrix} 1 &0 & 0 & 0 & 0 \\ 0 &1+\theta & 0 & 0 & 0 \\   0 & 0&(1+\theta)^2 & 0 & 0    \\   0 & 0 & 0&(1+\theta)^3  & 0  \\   0 & 0 & 0& 0 & (1+\theta)^2    \end{bmatrix}. \]
 Then, Lemma \ref{lem.contractor}  implies that $\exists~\bar\theta>0$ such that  $\forall \theta \in (0,\bar \theta) \exists c_\theta>0$, where $\mu_\infty(P^{{\theta}}J(P^{{\theta}})^{-1})<c_\theta<0$. For instance, $\bar\theta=1$ for the example above. Choosing $\theta=0.1$, for instance, gives $\mu_\infty(P^{{\theta}}J(P^{{\theta}})^{-1})=  -0.0909 <0$. \hfill $\blacksquare$
 
Since we want to apply the above definition to matrices of the form $\sum_\ell \rho_\ell \Lambda_\ell$, we state the following basic Lemma which follows immediately from the subadditivity and homogeneity of the logarithmic norm.
\begin{lemma} Let {the} square matrices $\Lambda_\ell,\ell=1,\dots,s$ be given such that $\mu_\infty(\Lambda_\ell)\le0$, and denote $\bar\Lambda(\rho_1,\dots,\rho_s)=\sum_\ell \rho_\ell \Lambda_\ell$. Then, $\bar\Lambda(\rho_1,\dots,\rho_s)$ is weakly contractive for all $\rho_1,\dots,\rho_s>0$ iff  ${\bar\Lambda}(\rho_1,\dots,\rho_s)$ is weakly contractive for at least one positive tuple $(\rho_1,\dots,\rho_s)$.
\end{lemma}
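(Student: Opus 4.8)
One implication is trivial, so the entire content is: if $\bar\Lambda(\rho^{(0)})$ is weakly contractive for a single positive tuple $\rho^{(0)}=(\rho_1^{(0)},\dots,\rho_s^{(0)})$, then $\bar\Lambda(\rho)$ is weakly contractive for \emph{every} positive tuple $\rho$. The plan is to verify that every ingredient of Definition~\ref{def.weakcontract} is insensitive to the choice of positive $\rho$: the standing hypothesis $\sigma_i\le0$; the partition $\{1,\dots,n\}=S_-\sqcup S_0$; and, for row indices $i\in S_0$, the sign pattern of the off-diagonal entries $[\bar\Lambda(\rho)]_{ij}$.

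First I would use that each functional $\sigma_i$ occurring in \eqref{e.lognorm_inf} is subadditive and positively homogeneous, whence $\sigma_i(\bar\Lambda(\rho))\le\sum_{\ell=1}^s\rho_\ell\,\sigma_i(\Lambda_\ell)\le0$ for every positive $\rho$ and every $i$; so the standing hypothesis of Definition~\ref{def.weakcontract} holds for all positive tuples. Next I would show that, for a fixed index $i$, the following are equivalent and manifestly $\rho$-free: (a) $i\in S_0$ relative to some positive tuple; (b) $\sigma_i(\Lambda_\ell)=0$ for all $\ell$, \emph{and}, for each $j\ne i$, the scalars $[\Lambda_1]_{ij},\dots,[\Lambda_s]_{ij}$ are sign-aligned (the nonzero ones all share one sign). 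Indeed, if $\sigma_i(\bar\Lambda(\rho))=0$ the sandwich above collapses, forcing $\rho_\ell\sigma_i(\Lambda_\ell)=0$ hence $\sigma_i(\Lambda_\ell)=0$ for each $\ell$, and forcing the triangle inequality $|\sum_\ell\rho_\ell[\Lambda_\ell]_{ij}|\le\sum_\ell\rho_\ell|[\Lambda_\ell]_{ij}|$ to be an equality, i.e.\ sign-alignment; conversely, given (b), sign-alignment turns $|\sum_\ell\rho_\ell[\Lambda_\ell]_{ij}|$ back into $\sum_\ell\rho_\ell|[\Lambda_\ell]_{ij}|$, so $\sigma_i(\bar\Lambda(\rho))=\sum_\ell\rho_\ell\sigma_i(\Lambda_\ell)=0$. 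Consequently $S_0$, and hence $S_-$, is the same set for every positive tuple.

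The last step is pure bookkeeping on the chains. Sign-alignment means that for $i\in S_0$ and $j\ne i$ the sign of $[\bar\Lambda(\rho)]_{ij}=\sum_\ell\rho_\ell[\Lambda_\ell]_{ij}$ equals the common sign of its nonzero summands (or is $0$), so the predicate ``$[\bar\Lambda(\rho)]_{ij}>0$'' does not depend on $\rho$ as long as $i\in S_0$. In a weak-contractivity chain for an index $i\in S_0$, the listed indices $i,i_0,\dots,i_{k_i-1}$ all lie in $S_0$ and only the terminal index $i_{k_i}$ lies in $S_-$; therefore every entry traversed by the chain has its \emph{row} index in $S_0$, so by the previous sentence its positivity is $\rho$-free, while by the second step the classes $S_0,S_-$ are $\rho$-free. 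Hence the existence of the required chains for $\bar\Lambda(\rho)$ is equivalent to their existence for $\bar\Lambda(\rho^{(0)})$, which proves the lemma. The one point that needs care --- and the only possible obstacle --- is exactly this row/column bookkeeping: for a \emph{row} index $i\in S_-$ the entry $[\bar\Lambda(\rho)]_{ij}$ genuinely can change sign with $\rho$, so the argument would fail if chains were allowed to originate from $S_-$; it goes through precisely because Definition~\ref{def.weakcontract} uses $S_-$ indices only as chain endpoints. Everything else reduces to the triangle inequality.
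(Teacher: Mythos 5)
Your proof is correct. For comparison: the paper does not actually prove this lemma --- it only asserts that it ``follows immediately from the subadditivity and homogeneity of the logarithmic norm,'' which is the first step of your argument ($\sigma_i(\bar\Lambda(\rho))\le\sum_\ell\rho_\ell\,\sigma_i(\Lambda_\ell)\le 0$) but does not by itself address the connectivity condition in Definition~\ref{def.weakcontract}. The substantive content you supply, and which is genuinely needed, is the equality-case analysis of the triangle inequality: $\sigma_i(\bar\Lambda(\rho))=0$ for one positive tuple forces $\sigma_i(\Lambda_\ell)=0$ for every $\ell$ together with sign-alignment of $[\Lambda_1]_{ij},\dots,[\Lambda_s]_{ij}$ for each $j\ne i$, and these two $\rho$-free conditions are in turn sufficient for $\sigma_i(\bar\Lambda(\rho))=0$ at every positive tuple. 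This is what makes $S_0$, $S_-$, and the positivity pattern of the off-diagonal entries in the $S_0$-rows independent of $\rho$, so that the chains transfer between tuples. Your closing remark --- that entries in $S_-$-rows can change sign with $\rho$, and that the argument survives only because the definition uses $S_-$ indices solely as chain endpoints (column indices), never as row indices of traversed entries --- is accurate and identifies exactly the point at which a naive ``homogeneity'' argument would be incomplete. In short, your write-up is a correct completion of the paper's one-line justification rather than a different route.
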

Therefore, we can verify that $\sum_\ell \rho_\ell \Lambda_\ell$ is weakly contractive by checking that $\sum_\ell   \Lambda_\ell$ is weakly contractive. 

This motivates the following definition:
\begin{definition}\label{def.weakContraction}
		Let a network $\Ns=(\Ss,\Rs)$ be given and assume that it admits a GLF $\tV(r)=\|B\Gamma r\|_\infty$. We say that the network is weakly contractive if there exist matrices $\Lambda_1,\dots,\Lambda_s$ satisfying Corollary \ref{cor} such that  $\bar\Lambda=\sum_{\ell=1}^s  \Lambda_\ell$ is weakly contractive.
	\end{definition}

\subsubsection{General results on strict contraction}
We state our result next which can be proved in a similar manner to the argument in Example 2 in \S \ref{s.strictcontraction}. All the test{s} stipulated in the statement of the theorem can be carried out computationally.
\begin{theorem}\label{th.scalednorm}
	Let a network $\Ns=(\Ss,\Rs)$ be given and assume that it admits a GLF $\tV(r)=\|B\Gamma r\|_\infty$, it has only trivial siphons, and is weakly contractive.
	
	Let $K\subset \Cs_\bx \cap \mathbb R_+^n$ be any compact set, and assume that $\varphi(t;K)$ is uniformly bounded (which follows automa{t}ically from the existence of a steady state or a conservation law as shown in Corollary \ref{cor.boundedness}). Then
	for any two solutions $x_1(\cdot)$ and $x_2(\cdot)$ of $\dot x=\Gamma R(x)$ that start in $K$, there exist $\bar\theta>0$, $c>0$ such that
	\[
	{\forall \theta \in (0,\bar\theta)}, |x_1(t)-x_2(t)|_{P^{{\theta}}B} \;\leq\; e^{-ct}|x_1(0)-x_2(0)|_{P^{{\theta}}B} \quad \mbox{for all}\ t\geq0,
	\]
	where $P^{{\theta}}$ is the corresponding contractor matrix defined before Lemma \ref{lem.contractor}.
\end{theorem}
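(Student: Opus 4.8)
The plan is to mimic, in general, the argument carried out on the PTM cycle in Example~2 of \S\ref{s.strictcontraction}: localize the dynamics to a convex compact set that is uniformly separated from $\partial\mathbb R_{\ge0}^n$, on which the coefficient functions $\rho_\ell(x)$ are uniformly bounded above and below by positive constants, and then run the computation of Lemma~\ref{mainlemmaB} with the scaled norm $|\cdot|_{PB}$ in place of $|\cdot|_B$, using the weak‑contractivity hypothesis to convert the bound $\mu_\infty\le 0$ into a strict bound $\mu_\infty\le -c<0$.

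First I would invoke the persistence/boundedness machinery. Since $\Ns$ has only trivial siphons and $\varphi(t,K)$ is uniformly bounded over $\Cs_\bx$ (which, as remarked, follows from Corollary~\ref{cor.boundedness} once a steady state exists, and trivially for conservative networks), Lemma~\ref{eventuallyK} yields an $\ve>0$ and a \emph{convex}, compact, forward‑invariant set $\wt K\subset[\ve,+\infty)^n$ with $\varphi(t,K)\subset\wt K$ for all $t\ge0$. By AK3 each $\rho_\ell$ is continuous and strictly positive on $\mathbb R_+^n$, so there are constants $0<\underline\rho\le\overline\rho$ with $\rho_\ell(x)\in[\underline\rho,\overline\rho]$ for all $x\in\wt K$ and all $\ell$. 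Next, for a nonsingular diagonal $P$, left‑multiplying the identity $CQ_\ell=\Lambda_\ell C$ of Corollary~\ref{cor} (with $C=B\Gamma$) gives $(PC)Q_\ell=(P\Lambda_\ell P^{-1})(PC)$; repeating the chain of equalities and the single inequality in the proof of Lemma~\ref{mainlemmaB} verbatim with $PC$ in place of $C$ then produces, for every $K\in\mathcal K_\Ns$ with positive entries $\rho_1,\dots,\rho_s$,
\[
\mu_{PB,\Imgamma}(\Gamma K)\ \le\ \mu_\infty\!\left(P\Bigl(\textstyle\sum_{\ell=1}^s\rho_\ell\,\Lambda_\ell\Bigr)P^{-1}\right).
\]
Here $|\cdot|_{PB}$ is genuinely a norm on $\Cs_\bx$, since $\ker(B\Gamma)=\ker\Gamma$ forces $\ker B|_{\Imgamma}=\{0\}$ (Lemma~\ref{lem.linalg}), and $P$ is nonsingular, so Lemma~\ref{lem.normB} applies.

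The heart of the proof is the uniform weak‑contractivity estimate. Since the network is weakly contractive, $\bar\Lambda=\sum_{\ell}\Lambda_\ell$ is weakly contractive, and by the lemma preceding Definition~\ref{def.weakContraction} so is $\sum_\ell\rho_\ell\Lambda_\ell$ for every positive tuple $(\rho_1,\dots,\rho_s)$; let $P=P_\theta$ be the contractor matrix attached to $\bar\Lambda$ as defined before Lemma~\ref{lem.contractor}. Applying Lemma~\ref{lem.contractor} to $\sum_\ell\rho_\ell\Lambda_\ell$ gives, for each fixed coefficient vector, a threshold and a strictly negative decay constant that depend only on the entries of $\sum_\ell\rho_\ell\Lambda_\ell$. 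Running this over the compact box $[\underline\rho,\overline\rho]^s$ — on which the quantities $\sigma_i(\sum_\ell\rho_\ell\Lambda_\ell)$ entering the inequalities of that proof are continuous and strictly negative (hence bounded away from $0$) while the remaining entries are bounded — yields a single $\bar\theta>0$ and, for each fixed $\theta\in(0,\bar\theta)$, a single $c>0$ with
\[
\mu_\infty\!\left(P_\theta\Bigl(\textstyle\sum_{\ell=1}^s\rho_\ell(x)\,\Lambda_\ell\Bigr)P_\theta^{-1}\right)\ \le\ -c\qquad\text{for all }x\in\wt K.
\]
Combining this with the previous display and the fact that the Jacobian of $\dot x=\Gamma R(x)$ is $\Gamma\,(\partial R/\partial x)(x)\in\Gamma\mathcal K_\Ns$ gives $\mu_{P_\theta B,\Imgamma}\big(\Gamma(\partial R/\partial x)(x)\big)\le -c$ for all $x\in\wt K$. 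Finally, since $\wt K$ is convex and forward invariant and contains both trajectories $x_1(t),x_2(t)$, the argument proving Theorem~\ref{th.mainB} (i.e.\ Theorem~\ref{th.contraction} applied with the $(P_\theta B,\Imgamma)$‑restricted logarithmic norm, equivalently after passing to the reduced system~\eqref{e.red_Sys}) now goes through with rate $-c<0$ in place of $0$, yielding
\[
|x_1(t)-x_2(t)|_{P_\theta B}\ \le\ e^{-ct}\,|x_1(0)-x_2(0)|_{P_\theta B}\qquad\text{for all }t\ge0,
\]
which is the assertion with $P=P_\theta$.

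I expect the main obstacle to be exactly the uniformity in the third paragraph: Lemma~\ref{lem.contractor} is stated pointwise in the coefficient vector, so one must check that the threshold $\bar\theta$ and the decay constant $c$ can be chosen independently of $x\in\wt K$, and that a single contractor matrix $P_\theta$ (that of $\bar\Lambda$) serves for every $\sum_\ell\rho_\ell(x)\Lambda_\ell$. This is where compactness of $\wt K$ (hence of the range of $(\rho_1,\dots,\rho_s)$), the strict positivity of the $\rho_\ell$ guaranteed by AK3, and the fact that weak contractivity of one positive combination forces it for all of them are used together; everything else is a routine rerun of the computations already performed for Lemma~\ref{mainlemmaB} and in Example~2.
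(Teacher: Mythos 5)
Your proposal is correct and follows essentially the same route as the paper, which proves Theorem \ref{th.scalednorm} by generalizing the argument of Example~2 in \S\ref{s.strictcontraction}: Lemma \ref{eventuallyK} to confine trajectories to a convex compact $\wt K\subset[\ve,\infty)^n$, the scaled identity $(PC)Q_\ell=(P\Lambda_\ell P^{-1})(PC)$ fed through Lemma \ref{mainlemmaB}, and weak contractivity plus Lemma \ref{lem.contractor} to obtain a uniform strictly negative logarithmic-norm bound. Your explicit treatment of the uniformity of $\bar\theta$ and $c$ over the compact range of $(\rho_1,\dots,\rho_s)$ supplies a detail the paper leaves implicit, and is handled correctly.
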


\paragraph{Special case} Recall the three-body binding example. Remember that we did not need to scale the norm. In general, if a GLF takes the form $\tV(r)=\| \Theta \Gamma r \|_\infty$ for some diagonal matrix $\Theta$, then no scaling is required. This is stated by the following theorem which is proved in the appendix.
\begin{theorem}\label{th.strict_contraction}	Let a network $\Ns=(\Ss,\Rs)$ be given and assume that it admits a GLF $\tV(r)=\|\Theta\Gamma r\|_\infty$ for some non-negative diagonal matrix $\Theta$.  Furthermore, assume $\Ns$ admits trivial siphons only.

	Let $K\subset \Cs_\bx \cap \mathbb R_+^n$ be any compact set, and assume that $\varphi(t;K)$ is uniformly bounded. Then
	for any two solutions $x_1(\cdot)$ and $x_2(\cdot)$ of $\dot x=\Gamma R(x)$ that start in $K$, there exists   $c>0$ such that
	\[
	|x_1(t)-x_2(t)|_{\Theta} \;\leq\; e^{-ct}|x_1(0)-x_2(0)|_{\Theta} \quad \mbox{for all}\; t\geq0.
	\]
\end{theorem}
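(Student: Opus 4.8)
\emph{Proof plan.} The plan is to follow the template of Theorem~\ref{th.mainB}: reduce the claim to an upper bound on a subspace‑restricted logarithmic norm of the Jacobian, but now produce a \emph{strictly} negative bound on a well‑chosen forward‑invariant compact set. First I would record that $\Theta$ must act injectively on $\Imgamma$ (otherwise $\ker(\Theta\Gamma)\ne\kerGamma$ and $\tV$ could not be a GLF), so by Lemma~\ref{lem.normB} the functional $|x|_\Theta=\|\Theta(x-\bx)\|_\infty$ is a genuine norm on $\Cs_\bx$. Next I would invoke the two structural hypotheses: since $\Ns$ has only trivial siphons and $\varphi(t,K)$ is uniformly bounded, Lemma~\ref{eventuallyK} gives an $\ve>0$ and a \emph{convex} compact set $\wt K\subset[\ve,+\infty)^n$ with $K\subset\wt K$ and $\varphi(t,K)\subset\wt K$ for all $t\ge0$. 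For two solutions $x_1,x_2$ starting in $K$, convexity of $\wt K$ lets us write $w:=x_1-x_2\in\Imgamma$ with $\dot w=\Gamma K(t)w$, where $K(t)=\int_0^1\tfrac{\partial R}{\partial x}\bigl(x_2(t)+sw(t)\bigr)\,ds\in\mathcal K_\Ns$ lies in $\mathcal K_\Ns$ because the whole segment stays in $\wt K\subset\mathbb R_+^n$.

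Second, I would make the estimate of Lemma~\ref{mainlemmaB} exact in the diagonal case. With $B=\Theta$ the identity $\Theta\Gamma Q_\ell=\Lambda_\ell\Theta\Gamma$ from Lemma~\ref{simplelemma} says $\Theta J_\ell=\Lambda_\ell\Theta$ \emph{as operators on $\Imgamma$}, hence for $z\in\Imgamma$ one has $\Theta(I+h\Gamma K)z=(I+h\sum_\ell\rho_\ell\Lambda_\ell)\,\Theta z$ with $\Theta z$ again ranging over all of $\Imgamma$. Therefore $\mu_{\Theta,\Imgamma}(\Gamma K)=\mu_{\infty,\Imgamma}\bigl(\sum_\ell\rho_\ell\Lambda_\ell\bigr)$, the $\ell_\infty$ logarithmic norm restricted to $\Imgamma$, which is $\le0$ by Corollary~\ref{cor}. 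On $\wt K$ the coefficients $\rho_\ell(x)$ from \eqref{K_def} are continuous and, by AK3 together with $\wt K\subset[\ve,\infty)^n$, bounded strictly between two positive constants; so once strict negativity of $\mu_{\infty,\Imgamma}(\sum_\ell\rho_\ell\Lambda_\ell)$ is established for positive coefficients, compactness of $\wt K$ upgrades it to a uniform bound $\mu_{\Theta,\Imgamma}(\Gamma K(t))\le-c<0$.

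Third — and this is where the real work lies — I would prove that $\bar\Lambda:=\sum_\ell\rho_\ell\Lambda_\ell$ has $\mu_{\infty,\Imgamma}(\bar\Lambda)<0$ for all positive $\rho_\ell$. Each row obeys $\sigma_i(\bar\Lambda)\le0$; if every $\sigma_i(\bar\Lambda)<0$ the bound is immediate, and the three‑body binding example suggests this is what typically happens in the diagonal case. The delicate case is when some rows have $\sigma_i(\bar\Lambda)=0$: there a unit direction $z\in\Imgamma$ realizing $\mu_{\infty,\Imgamma}(\bar\Lambda)=0$ through such a row $i$ would be forced into a rigid $\pm$ sign/magnitude pattern along every index reachable from $i$ via nonzero off‑diagonal entries of $\bar\Lambda$, and I expect this pattern to be incompatible with $z\in\Imgamma$ precisely when $C$ has the diagonal form $\Theta\Gamma$ and all siphons are trivial — equivalently, that in the diagonal case the null‑space freedom $BJ_\ell=\Lambda_\ell B+Y_\ell D$ (Lemma~\ref{simplelemma}) can always be used to choose the certificates so that $\sum_\ell\Lambda_\ell$ is weakly contractive of nonexpansivity depth $0$, i.e.\ already has all $\sigma_i<0$; this would also explain why no diagonal rescaling is needed here, in contrast to Theorem~\ref{th.scalednorm}. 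The main obstacle is establishing this combinatorial fact, presumably via a siphon/support argument on such a $z$. Granting it, $\mu_{\Theta,\Imgamma}(\Gamma K(t))\le-c$ holds along the trajectories, and the subspace version of Theorem~\ref{th.contraction} used for Theorem~\ref{th.mainB}, applied on the convex set $\wt K$, yields $|x_1(t)-x_2(t)|_\Theta\le e^{-ct}|x_1(0)-x_2(0)|_\Theta$ for all $t\ge0$.
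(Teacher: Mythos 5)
Your reduction is sound up to the last step, but the last step is exactly where the theorem lives, and you leave it as a conjecture. You correctly isolate the claim that needs proving — that $\bar\Lambda=\sum_{\ell}\rho_\ell\Lambda_\ell$ satisfies $\mu_\infty(\bar\Lambda)<0$ for all positive $\rho_\ell$ when $C=\Theta\Gamma$ with $\Theta$ diagonal — but you then write ``I expect this pattern to be incompatible\dots'', ``the main obstacle is establishing this combinatorial fact'', and ``granting it''. That is the entire content of the theorem; without it you have only re-derived nonexpansivity (Lemma~\ref{mainlemmaB}) on a compact set. You also point at the wrong hypothesis for closing the gap: the trivial-siphon assumption plays no role in the strict negativity of the logarithmic norm — it is consumed entirely by Lemma~\ref{eventuallyK}, which keeps trajectories uniformly away from the boundary so that the $\rho_\ell$ are bounded below on $\wt K$. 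A ``siphon/support argument on $z$'' is not how the strict bound is obtained.

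The missing ingredient is assumption AS1. Since $C=\Theta\Gamma$ is diagonal-times-stoichiometry, the $i$th row of $C$ is $\theta_{ii}\gamma_i^T$ where $\gamma_i$ is the $i$th row of $\Gamma$. By AS1 there is a flux $v\gg0$ with $\Gamma v=0$, so $\gamma_i^Tv=0$ forces some entry $\gamma_{ij}<0$; hence $X_i$ is a reactant of $\R_j$, the pair $(i,j)$ lies in $\mathcal P_\Ns$, and the corresponding $\rho_\ell$ is strictly positive on $\mathbb R_+^n$ by AK3. For that $\ell$, the $i$th row of $CQ_\ell=Ce_j\gamma_i^T$ equals $\theta_{ii}\gamma_{ij}\,\gamma_i^T$, which is a \emph{negative} multiple of the $i$th row of $C$ itself; so the identity $CQ_\ell=\Lambda_\ell C$ can be satisfied with the $i$th row of $\Lambda_\ell$ having a strictly negative diagonal entry and zero off-diagonal entries, giving $\sigma_i(\Lambda_\ell)<0$. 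Summing with positive weights, $\sigma_i(\bar\Lambda)<0$ for \emph{every} $i$ indexing a nonzero row of $\Theta$ — i.e., your ``delicate case'' of a zero row sum never occurs in the diagonal setting, which is precisely why no contractor matrix $P_\theta$ is needed. With that in hand, your compactness argument on $\wt K$ finishes the proof as you describe. One smaller caution: you assert $\mu_{\Theta,\Imgamma}(\Gamma K)=\mu_{\infty}(\sum_\ell\rho_\ell\Lambda_\ell)$ with equality; only the inequality $\le$ is established by the machinery of Lemma~\ref{mainlemmaB}, and the inequality is all you need.
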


\section{Entrainment to periodic inputs}
In order to show entrainment for any arbitrary input, we need to show strict contraction. Nevertheless, this requirement has been relaxed using weaker notions of contraction \cite{margaliot16}. We review the basic results here, and then show that they apply to our class of systems. 

\subsection{Preliminaries}

\begin{definition}[\cite{margaliot16}]\label{sost} Consider a dynamical system $\dot x=f({x,t})$, where $f$ is defined on {a} convex forward invariant set $K\subset {\mathbb R}^n$, is differentiable in $x$, and is continuous in $t$. Let $|.|$ be a norm on $\mathbb R^n$. Then, $f$ is said to be contractive after a small overshoot and short transient (CSOST) on $K$ w.r.t. to $|.|$ if for each $\varepsilon>0$ and each $\tau>0$ there exists $c=c(\tau,\varepsilon)>{0}$ such that 
	\[ |\varphi(t+\tau;t_0,a)-\varphi(t+\tau;t_0,{b})| \le (1+\varepsilon) e^{-(t-t_0)c} |a-b|, \]
	for all $t\ge t_0$ and all $a,b \in K$. 
\end{definition} 
If a system is  CSOST then it entrains to periodic inputs as the following proposition states:

\begin{proposition}[\cite{margaliot16}]  \label{pr.entrainment}Let $T>0$. Given the system introduced in Definition \ref{sost}, and assume the domain $K$ is compact, and that $f$ is $T$-periodic. Then for any $t_0\ge0$ there exists a unique solution $\tilde x:[t_0,\infty)\to K$ with period $T$, and $\varphi(t;t_0,a)$ converges to $\tilde x$ for any $a\in K$. 
\end{proposition}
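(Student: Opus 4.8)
The plan is to realize the proposition as a consequence of the Banach fixed point theorem applied to a sufficiently high iterate of the Poincar\'e (period) map. Fix $t_0\ge0$ and, say, $\varepsilon=1$. First I would define the period map $Q:K\to K$ by $Q(a):=\varphi(t_0+T;t_0,a)$, which makes sense because $K$ is forward invariant, and observe that $T$-periodicity of $f$ together with uniqueness of solutions gives $\varphi(t_0+kT;t_0,a)=Q^k(a)$ for every integer $k\ge0$. The crucial point is that $Q$ itself need not be a contraction, but a high iterate of it is. Applying the CSOST property with $\tau=T$ yields a constant $c>0$ with $|\varphi(t+T;t_0,a)-\varphi(t+T;t_0,b)|\le 2\,e^{-(t-t_0)c}\,|a-b|$ for all $t\ge t_0$ and $a,b\in K$; evaluating this at $t=t_0+kT$ gives $|Q^{k+1}(a)-Q^{k+1}(b)|\le 2\,e^{-kTc}\,|a-b|$. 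Choosing $N$ large enough that $\lambda:=2\,e^{-NTc}<1$, the map $Q^{N+1}$ is a $\lambda$-contraction of the metric space $(K,|\cdot|)$, which is complete because $K$ is compact.

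Next I would extract the periodic orbit. By the Banach fixed point theorem $Q^{N+1}$ has a unique fixed point $\bar a\in K$ and $(Q^{N+1})^m(a)\to\bar a$ for every $a\in K$. Since $Q$ commutes with $Q^{N+1}$, the point $Q(\bar a)$ is again fixed by $Q^{N+1}$, hence $Q(\bar a)=\bar a$; conversely any fixed point of $Q$ is fixed by $Q^{N+1}$, hence equals $\bar a$, so $\bar a$ is the unique fixed point of $Q$. Setting $\tilde x(t):=\varphi(t;t_0,\bar a)$, we get $\tilde x(t_0+T)=Q(\bar a)=\bar a=\tilde x(t_0)$; by $T$-periodicity of $f$ the curves $t\mapsto\tilde x(t+T)$ and $t\mapsto\tilde x(t)$ solve the same ODE and agree at $t_0$, so by uniqueness they coincide and $\tilde x$ is $T$-periodic. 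Uniqueness of the periodic solution in $K$ is then immediate: any $T$-periodic solution $\hat x$ in $K$ has $\hat x(t_0)$ fixed by $Q$, hence $\hat x(t_0)=\bar a$ and $\hat x=\tilde x$.

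Then I would prove that every trajectory converges to $\tilde x$. Note first that the CSOST estimate evaluated at $t=t_0$ reads $|\varphi(t_0+s;t_0,y)-\varphi(t_0+s;t_0,z)|\le 2\,|y-z|$ for every $s>0$ (the exponential prefactor being $1$ there, irrespective of the constant associated to $\tau=s$), so $\varphi(t_0+s;t_0,\cdot)$ is $2$-Lipschitz, uniformly for $s\in[0,T]$; in particular $Q$ is Lipschitz and hence so is every $Q^r$. Given $a\in K$, write $t=t_0+kT+s$ with $k\in\mathbb Z_{\ge0}$ and $s\in[0,T)$. The cocycle property and $T$-periodicity of $f$ give $\varphi(t;t_0,a)=\varphi(t_0+s;t_0,Q^k(a))$ and, using $Q^k(\bar a)=\bar a$, $\tilde x(t)=\varphi(t_0+s;t_0,\bar a)$, whence $|\varphi(t;t_0,a)-\tilde x(t)|\le 2\,|Q^k(a)-\bar a|$. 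Finally $Q^k(a)\to\bar a$ as $k\to\infty$: writing $k=m(N+1)+r$ with $0\le r\le N$ gives $Q^k(a)=Q^r\bigl((Q^{N+1})^m(a)\bigr)\to Q^r(\bar a)=\bar a$ by continuity of $Q^r$. Therefore $\varphi(t;t_0,a)\to\tilde x(t)$ as $t\to\infty$ (and one gets a uniform exponential rate by tracking $\lambda$).

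The main obstacle is the first step: one cannot run a contraction-mapping argument on $Q$ directly, because CSOST controls the one-period map only up to the overshoot factor $1+\varepsilon>1$, so $Q$ is merely \emph{eventually} contracting. The trick that makes this work is to take the free parameter $\tau$ in the CSOST definition equal to the period $T$, so that the CSOST bound, read along the arithmetic progression $t_0+kT$, turns into a bound on the iterates $Q^{k+1}$, and then to pass to a high enough power $Q^{N+1}$ to absorb the constant prefactor into the exponential decay. The remaining ingredients — promoting the fixed point of $Q^{N+1}$ to a fixed point of $Q$, identifying it with the periodic orbit, and upgrading convergence along the integer time grid to continuous-time convergence via the uniform one-period Lipschitz bound — are routine, and convexity of $K$ is not needed for this implication (it enters only in verifying CSOST in the first place).
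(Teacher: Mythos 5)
Your proof is correct, and it is essentially the standard argument for this result: the paper itself gives no proof of Proposition~\ref{pr.entrainment} (it is imported verbatim from \cite{margaliot16}), and the proof in that reference proceeds exactly as you do, by reading the CSOST bound with $\tau=T$ along the grid $t=t_0+kT$ to make a high iterate of the Poincar\'e map a strict contraction, applying the Banach fixed point theorem on the compact (hence complete) forward-invariant set $K$, promoting the fixed point of $Q^{N+1}$ to the unique fixed point of $Q$, and upgrading convergence along multiples of $T$ to continuous time via the uniform $(1+\varepsilon)$-Lipschitz bound at $t=t_0$. Your side remarks are also accurate: convexity of $K$ plays no role in this implication, and the argument yields an exponential convergence rate.
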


\subsection{Contraction after small transients and overshoots}

In order to study entrainment, a time-varying input is applied to the system \eqref{e.ode} via its parameters. A typical case is when the system is influenced by an upstream species, or by an external inflow. This can be modeled via time-varying kinetic constants. For example, $R_j(x,t)=k_j(t) \prod_{i=1}^n x_i^{\alpha_{ij}}$ in the case of mass-action kinetics.  But our formalism is not limited to these cases as it can be applied via any parameter appearing in a reaction rate. Therefore, we consider a more general form as follows:
\begin{equation}\label{e.tvode}
	\dot x= \Gamma R(x,t).
\end{equation}

To guarantee that our time-varying dynamical system behaves well, we assume the following:
\begin{enumerate}
	\item $R(x,t)$ is continuously differentiable in $x$, and continuous in $t$. 
	\item For each fixed $t^*\ge 0$, $R(x,t^*)$ satisfies AK1-AK3.
	
	\item For every reaction $\R_j$ there exist two rates $\bar R_j, \underline R_j$ satisfying AK1-AK3 such that the following is satisfied for all $t\ge 0$:
	\[ \underline R_j(x) \le R_j(x,t) \le \bar R_j(x).\]
\end{enumerate}

For uniformly bounded trajectories, persistence results generalize to the time-varying system \eqref{e.tvode} when the network admits no critical siphons \cite{angeli11}.  In our context, we need to establish a uniform bound{.}

\begin{lemma}\label{lem.sep}
	Let $\Ns$ be a given network, and assume that it admits trivial siphons only. Consider a fixed choice of admissible kinetics $R$, and consider the corresponding dynamical system \eqref{e.tvode}.   Fix a proper stoichiometric class $\Cs$ and let $K \subseteq \Cs$ be compact and forward invariant.  Then, for all $\tau>0$, $\varphi(\tau,K)$ is a compact subset of the open positive orthant. %
\end{lemma}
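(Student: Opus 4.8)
The plan is to establish two things separately — that $\varphi(\tau,K)$ is compact, and that it is contained in the open orthant — and then to note that a compact subset of $(0,+\infty)^n$ automatically lies in some box $[\varepsilon,+\infty)^n$, so one also recovers a uniform separation from $\partial\mathbb R_{\ge0}^n$, exactly as in the proof of Lemma~\ref{eventuallyK}. Compactness is the routine half: since $K$ is compact and forward invariant, every solution of \eqref{e.tvode} starting in $K$ exists for all $t\ge0$ and stays in $K$ (a solution leaving $[0,\infty)$ would have to leave the compact set $K$); and since $R$ is $\mathscr C^1$ in $x$ and continuous in $t$, solutions depend continuously on their initial data, so $\varphi(\tau,\cdot)\colon K\to\Cs$ is continuous and $\varphi(\tau,K)$ is the continuous image of a compact set, hence compact.

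The content is to show $\varphi(\tau,K)\subset(0,+\infty)^n$ for every $\tau>0$, and here the hypothesis that all siphons are trivial (no critical siphons) is used; the plan is to transfer persistence from the autonomous theory. By the third standing assumption on \eqref{e.tvode}, every $R_j(\cdot,t)$ is sandwiched between two autonomous admissible rates $\underline R_j$ and $\bar R_j$; since $\Ns$ has no critical siphon, the persistence results of \cite{angeli07p} apply to $\dot x=\Gamma\underline R(x)$ and $\dot x=\Gamma\bar R(x)$, and their time-varying counterpart \cite{angeli11} applies to \eqref{e.tvode} itself. For initial data already in $\Cs\cap\mathbb R_+^n$, this together with forward invariance (hence uniform boundedness) of $K$ yields a uniform positive lower bound on all coordinates, in the manner of Lemma~\ref{eventuallyK} (which invokes \cite[Lemma~5]{MA_TAC22}), now in the time-varying setting. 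For initial data on $\partial\mathbb R_{\ge0}^n$ one tracks the set $Z(t)$ of coordinates vanishing at time $t$: whenever $Z(t_1)$ fails to be a siphon there is a reaction $\R_j$ all of whose reactants lie outside $Z(t_1)$ — hence, by AK3 applied to the lower bound $\underline R_j$, firing at a strictly positive rate — having some $X_i\in Z(t_1)$ as a product with $X_i$ not among its reactants, so that $\dot x_i(t_1)>0$ while every reaction consuming $X_i$ is off by AK2; therefore $Z$ strictly shrinks. Iterating, $Z(t)$ must reach a siphon in finite time, and properness of $\Cs$ together with triviality of every siphon rules out a nonempty terminal $Z$, so $x(t)\in\mathbb R_+^n$ for all $t>0$. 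Either way $\varphi(\tau,K)\subset(0,+\infty)^n$.

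The step I expect to be the crux is this last one: ruling out that the vanishing-coordinate set stabilizes at a nonempty (necessarily trivial) siphon rather than emptying out. This is precisely where ``no critical siphons'' and properness of the stoichiometric class have to be combined, and it is also the only place where the time-varying sandwich hypothesis does genuine work beyond quoting an autonomous persistence theorem. The remaining ingredients — continuous dependence, compactness, and upgrading ``positive'' to ``uniformly bounded away from the boundary'' — are routine and follow the template of Lemma~\ref{eventuallyK}.
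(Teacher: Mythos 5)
Your overall strategy coincides with the paper's: compactness is the continuous image of a compact set, and membership in the open orthant is obtained by showing that the set $Z(t)$ of vanishing coordinates must be a siphon if it is to persist, then contradicting the hypotheses on siphons and on $\Cs$. The difficulty is that you explicitly defer the step you yourself call the crux, and that step is where all the content lives. Written out: if $Z(t^*)=\Sigma\neq\emptyset$ were a siphon, then --- reading Definition~\ref{d.trivialSiphon} in the way that makes the persistence claims of \S 4.1 consistent with \cite{angeli07p}, namely that a \emph{trivial} siphon contains the support of a conservation law --- there would be a nonzero $w\ge 0$ with $w^T\Gamma=0$ and $\supp(w)\subseteq\Sigma$, hence $w^Tx(t^*)=0$; since $w^Tx$ is constant on the stoichiometric class, $w^Tx\equiv 0$ on $\Cs$, so $\Cs$ contains no strictly positive point, contradicting properness. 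Note that the contradiction is obtained the instant $Z(t)$ becomes a nonempty siphon; you do not need $Z$ to ``stabilize'' there. Without this, the proposal names the two hypotheses that must combine but does not actually combine them.

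There is a second gap in ``therefore $Z$ strictly shrinks; iterating, $Z(t)$ must reach a siphon in finite time.'' Each application of your argument shrinks $Z$ only on an unquantified right-neighborhood of $t_1$, and you never show that $Z(\cdot)$ is monotone, so a priori coordinates could return to zero and the iteration need not terminate. The missing ingredient is that a coordinate, once positive, stays positive on finite horizons: any reaction with $\gamma_{ij}<0$ has $X_i$ as a reactant, so by AK1--AK2 its rate is locally bounded by $Lx_i$, whence $\dot x_i\ge -L'x_i$ and $x_i(t)\ge x_i(t_0)e^{-L'(t-t_0)}>0$. Thus $Z(\cdot)$ is non-increasing; combined with ``$Z$ is never a nonempty siphon'' and ``$Z$ strictly decreases immediately after any time at which it is nonempty,'' $Z$ would otherwise have to be constant and nonempty on an open subinterval of $(0,\tau)$, a contradiction, so $Z(t)=\emptyset$ for all $t>0$. (This same fact is what the paper invokes, as forward invariance of the open positive orthant, to pass from ``the trajectory is interior at time $\tau_1/2$'' to ``$\varphi(\tau;x_0)$ is interior.'') With these two repairs your argument is correct and is essentially the paper's proof reorganized as a forward argument on $Z(t)$ rather than a contradiction pulled back from time $\tau$; also, the interior-initial-data case needs only forward invariance of the open orthant, not the asymptotic persistence machinery of Lemma~\ref{eventuallyK}.
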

\begin{proof} 
	Fix $\tau >0$. Since $\varphi$ is continuous, then $\varphi(\tau;K)$ is compact. %
	To prove that it is a subset of the open positive orthant, assume by contradiction that $\exists x_1$ such that $x_1 \in \varphi(\tau;K) \cap \partial \mathbb R_{+}^n$. Let $x_0:=\varphi(-\tau;x_1)$. {Note that} we have $x_0\in \Cs$ {since the system is initialized in $\Cs$}. Note also that $x_1=\varphi(\tau;x_0) \in \partial \mathbb R_{+}^n$. Next, we claim that there is $\tau_1>0$ such that $\varphi(t;x_0) \in \mathbb R_+^n$ for all $t\in (0,\tau_1)$. The latter claim can be proven as follows: If no such $\tau_1$ exists, then there {is} an interval $J$ such that  $\varphi(t;x_0) \in \partial \mathbb R_+^n$ for all $t \in J$.   Hence, there is a face of the boundary that intersects the stoichiometry class  for which solutions evolve in such intersection for all $t \in J$. Hence, let us call this face $L_\Sigma=\{x: x_i=0, i\in \Sigma\}$ for some  $\Sigma\subset \Ss$. This means $\dot x_i=0$ for all $t \in J, i \in \Sigma$.%
	 Moreover outgoing reactions in which $i$ is a reactant are such that $R_j(x(t),t)=0$. Hence, all reactions that have $i$ as a product need to also have 0 rates (otherwise $\dot{x}_i(t) > 0$).
	This implies that all reactions feeding into $i$ have zero rates, hence they have a reactant in $\Sigma$, therefore $\Sigma$ is {a} non-trivial siphon which is a contradiction.
	
	Now, since $\varphi(\tau_1/2;x_0)\in \mathbb R_+^n$, then $\varphi(\tau - \tau_1/2; \varphi(\tau_1/2;x_0) )=x_1 \in \partial \mathbb R_+^n$. This implies that a trajectory starting in the interior reaches the boundary in finite time which contradicts the forward invariance of the open positive orthant. \hfill $\blacksquare$
	
\end{proof}
Next, we establish that the existence of a GLF guarantees contraction after small transients and overshoots:
\begin{theorem}[Contraction after small transients and overshoots]\label{th.glf_sost}
	Let a network $\Ns=(\Ss,\Rs)$ be given and assume that it admits a GLF $\tV(r)=\|B\Gamma r\|_\infty$, it has only trivial siphons, and is weakly contractive.  Let $\Cs$ be a stoichiometric class. Let $K\subset \Cs$ be compact and forward invariant. Then, the dynamical system \eqref{e.tvode} defined on $K$ is CSOST with respect to $|.|_B$ defined on $\Cs$.
\end{theorem}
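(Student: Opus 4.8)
The strategy is to localize the strict-contraction analysis of \S\ref{s.strictcontraction} to a compact region of $\mathbb R_+^n$ that trajectories enter after the transient of length $\tau$, and then to absorb the discrepancy between $|\cdot|_B$ and the scaled norm $|\cdot|_{P_\theta B}$ into the overshoot factor $1+\varepsilon$. Fix $\varepsilon>0$, $\tau>0$, an initial time $t_0\ge0$, and $a,b\in K$. Because $K$ is forward invariant, $\varphi(s;t_0,K)\subseteq K\subseteq\Cs$ for all $s\ge t_0$; and because $\Ns$ has only trivial siphons, Lemma~\ref{lem.sep} — made uniform in $t_0$ by the persistence of the nonautonomous system \eqref{e.tvode} with no critical siphons \cite{angeli11} — produces an $\ve>0$, independent of $t_0$, with $\varphi(s;t_0,K)\subseteq[\ve,\infty)^n$ for all $s\ge t_0+\tau$. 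Set $K_\tau:=\overline{\bigcup_{t_0\ge0}\varphi(t_0+\tau;t_0,K)}\subseteq K\cap[\ve,\infty)^n$ and $\wt K:=\co(K_\tau)$. Then $\wt K$ is convex, compact, contained in $[\ve,\infty)^n\subseteq\mathbb R_+^n$ and, since $\Cs$ is convex, in $\Cs$; and by construction $\varphi(s;t_0,a),\varphi(s;t_0,b)$, and the segment joining them, lie in $\wt K$ for every $s\ge t_0+\tau$.

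Next I bound the logarithmic norm of the Jacobian uniformly on $\wt K\times[0,\infty)$. Over this set $\partial R/\partial x(x,t)$ ranges over a compact subset of $\mathcal K_\Ns$: its sign pattern is fixed by the network; its entries are bounded above by continuity and the sandwich bound $\underline R_j\le R_j(\cdot,t)\le\bar R_j$; and they are bounded below away from $0$ because $\wt K\subset\mathbb R_+^n$ and each $R(\cdot,t)$ satisfies AK3. Hence the coefficients of the decomposition \eqref{e.dR} obey $0<\rho_{\min}\le\rho_\ell(x,t)\le\rho_{\max}<\infty$ on $\wt K\times[0,\infty)$. Since $\Ns$ is weakly contractive, $\bar\Lambda=\sum_\ell\Lambda_\ell$ — hence $\sum_\ell\rho_\ell\Lambda_\ell$ for every positive tuple, by the lemma preceding Definition~\ref{def.weakContraction} — is weakly contractive, and Lemma~\ref{lem.contractor} yields $\bar\theta>0$ so that for each $\theta\in(0,\bar\theta)$ the contractor matrix $P_\theta$ gives $\mu_\infty(P_\theta\bar\Lambda P_\theta^{-1})<0$. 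Carrying out the computation in the proof of Theorem~\ref{th.scalednorm} (Example~2 of \S\ref{s.strictcontraction}) with these $P_\theta$ and the uniform bounds on the $\rho_\ell$, and shrinking $\bar\theta$ in terms of $\wt K$ if necessary, we obtain for each such $\theta$ a constant $c=c(\theta)>0$ with
\[
\mu_{P_\theta B,\Imgamma}\!\left(\Gamma\tfrac{\partial R}{\partial x}(x,t)\right)\;\le\;\mu_\infty\!\left(\sum_{\ell=1}^s\rho_\ell(x,t)\,P_\theta\Lambda_\ell P_\theta^{-1}\right)\;\le\;-c\qquad\text{for all }x\in\wt K,\ t\ge0,
\]
while the same chain with $P_\theta$ replaced by the identity is Lemma~\ref{mainlemmaB}, valid here because $\partial R/\partial x(x,t)\in\overline{\mathcal K_\Ns}$ for every $x\in\Cs$, $t\ge0$, and gives $\mu_{B,\Imgamma}(\Gamma\partial R/\partial x(x,t))\le0$ on all of $\Cs$.

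Now assemble. The bound $\mu_{B,\Imgamma}\le0$ on the convex set $\Cs$ is the time-dependent form of Theorem~\ref{th.mainB}, so $|\varphi(t_0+\tau;t_0,a)-\varphi(t_0+\tau;t_0,b)|_B\le|a-b|_B$. Applying the $\Imgamma$-restricted logarithmic-norm estimate exactly as in the proof of Theorem~\ref{th.mainB}, but with the time-dependent version of Theorem~\ref{th.contraction}, the bound $\mu_{P_\theta B,\Imgamma}\le-c$ on the convex set $\wt K$ gives, for all $s\ge t_0+\tau$,
\[
|\varphi(s;t_0,a)-\varphi(s;t_0,b)|_{P_\theta B}\;\le\;e^{-c(s-t_0-\tau)}\,|\varphi(t_0+\tau;t_0,a)-\varphi(t_0+\tau;t_0,b)|_{P_\theta B}.
\]
Since $P_\theta$ is diagonal with entries in $[1,(1+\theta)^k]$, where $k$ is the maximal nonexpansivity depth of $\bar\Lambda$, one has $|z|_B\le|z|_{P_\theta B}\le(1+\theta)^k|z|_B$ for all $z\in\Imgamma$; combining the three displayed relations,
\[
|\varphi(s;t_0,a)-\varphi(s;t_0,b)|_B\;\le\;(1+\theta)^k\,e^{-c(s-t_0-\tau)}\,|a-b|_B\qquad(s\ge t_0+\tau).
\]
With $s=t+\tau$, $t\ge t_0$, this is precisely the inequality of Definition~\ref{sost} with overshoot $(1+\theta)^k$; picking $\theta\in(0,\bar\theta)$ small enough that $(1+\theta)^k\le1+\varepsilon$ and setting $c(\tau,\varepsilon):=c(\theta)>0$ finishes the proof. (No separate estimate on $[t_0,t_0+\tau)$ is needed, as Definition~\ref{sost} only constrains times $\ge t_0+\tau$.)

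The main obstacle is the uniformity claimed in the middle step. Two things need care: (i) the separation of trajectories from $\partial\mathbb R_{+}^n$ must hold with an $\ve>0$ \emph{independent of the initial time} $t_0$, which is precisely where the absence of critical siphons and the nonautonomous persistence results of \cite{angeli11} enter; and (ii) a \emph{single} contractor matrix $P_\theta$ must serve for the entire family $\{\sum_\ell\rho_\ell(x,t)\Lambda_\ell : x\in\wt K,\ t\ge0\}$, since $P_\theta$ is built from the sets $S_-,S_{0j}$ of the weakly contractive matrix. When those sets do not depend on the positive weights $\rho_\ell$ a single $P_\theta$ works outright; otherwise one covers the compact box $[\rho_{\min},\rho_{\max}]^s$ by finitely many cells on which the combinatorial structure is constant and keeps the worst resulting rate $c$. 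Apart from these points, the argument is a nonautonomous repackaging of Theorems~\ref{th.mainB} and \ref{th.scalednorm}.
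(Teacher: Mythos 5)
Your proof follows essentially the same route as the paper's: separate the trajectories from the boundary after the transient of length $\tau$ via Lemma~\ref{lem.sep}, apply the scaled-norm strict contraction of Theorem~\ref{th.scalednorm} from time $t_0+\tau$ onward, use nonexpansiveness (Theorem~\ref{th.mainB}) on $[t_0,t_0+\tau]$, and absorb the equivalence constant between $|\cdot|_B$ and $|\cdot|_{P_\theta B}$ into the overshoot factor $1+\varepsilon$ by taking $\theta$ small. You are in fact more explicit than the paper about two uniformity points it glosses over (independence of the boundary-separation constant from $t_0$, and a single contractor matrix serving the whole family $\sum_\ell\rho_\ell\Lambda_\ell$); the only thin spot is your claim of a uniform-in-$t$ lower bound $\rho_{\min}>0$ on the partial derivatives, which does not follow from AK3 holding at each fixed $t$ together with the sandwich bounds $\underline R_j\le R_j(\cdot,t)\le\bar R_j$ (those constrain the rates, not their derivatives) unless $t$ ranges over a compact set --- as it effectively does in the $T$-periodic entrainment application, and as the paper itself implicitly assumes when it invokes the autonomous Theorem~\ref{th.scalednorm}.
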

\begin{proof}
	Fix $\tau,t_0>0$. %
	  Apply Lemma \ref{lem.sep}  to $K$, and  consider now the compact set $K_\tau=\varphi(\tau;t_0,K)$. Then, we apply Theorem \ref{th.scalednorm} for the trajectories starting at time $t_0+\tau$, i.e. starting in $K_\tau$. Hence, there exists some $\bar\theta(\tau)>0$ such that
	\begin{equation}\label{ent_proof1}
		|\varphi(t+\tau;t_0,a)-\varphi(t+\tau;t_0,b)|_{P_\theta B} \;\leq\; e^{-c(\tau,\theta) (t-t_0)}|\varphi(\tau;t_0,a)-\varphi(\tau;t_0,b)|_{P_\theta B} \quad \mbox{for all}\; t\geq0,
	\end{equation}
	where $P_\theta$ is  the diagonal contractor matrix defined before Lemma \ref{lem.contractor}.  Fix a choice of $\theta$. Then there exists $\tl\varepsilon=\tilde\varepsilon(\theta)>0$ (monotonically increasing as a function of  $\theta$) such that
	\[ |x|_B \le |x|_{P_\theta B} \le (1+\tilde\varepsilon(\theta) ) |x|_B.\]
	
	Applying the last inequality to \eqref{ent_proof1}, we get that the following holds for any choice of $\theta \in (0,\bar\theta)$:
	\begin{align}\nonumber |\varphi(t+\tau;t_0,a)-\varphi(t+\tau;t_0,b)|_{B} &\le |\varphi(t+\tau;t_0,a)-\varphi(t+\tau;t_0,b)|_{P_\theta B}  \\ \nonumber & \le  e^{-c(\tau,\theta) (t-t_0)}|\varphi(\tau;t_0,a)-\varphi(\tau;t_0,b)|_{P_\theta B} \\ \nonumber & \le (1+\tilde\varepsilon(\theta) ) e^{-c(\tau,\theta) (t-t_0)}|\varphi(\tau;t_0,a)-\varphi(\tau;t_0,b)|_{B} \\ \label{ent_proof2} & \mathop{\le}\limits^{(\star)}(1+\tilde\varepsilon(\theta) ) e^{-c(\tau,\theta) (t-t_0)}|a-b|_{B}, \end{align}
	where $(\star)$ follows from nonexpansiveness. 
	
	Pick an arbitrary $\varepsilon>0$. If $\varepsilon<\tilde\varepsilon{({\bar\theta})}$, then there exists $\theta \in (0,\bar\theta)$ such that $\varepsilon=\tilde\varepsilon(\theta)$. Hence, using \eqref{ent_proof2} we get:
	\begin{equation*} |\varphi(t+\tau;t_0,a)-\varphi(t+\tau;t_0,b)|_{B}  \le (1+\varepsilon ) e^{-c(\tau,\varepsilon)(t-t_0)}|a-b|_{B}.  \end{equation*}
	If $\varepsilon\ge\tilde\varepsilon(\bar\theta)$, then we apply \eqref{ent_proof2} for $\tl\varepsilon= \tl\varepsilon({\bar\theta/2})<\varepsilon$. Hence, we get
	\begin{equation*} |\varphi(t+\tau;t_0,a)-\varphi(t+\tau;t_0,b)|_{B}  \le (1+\tilde\varepsilon({\bar\theta/2}) )   e^{-c(\tau,\bar\theta(\tau)/2) (t-t_0)}|a-b|_{B} \le  (1+\varepsilon ) e^{-c(\tau)(t-t_0)}|a-b|_{B}.  \end{equation*}
	Therefore, the dynamical system is CSOST on $K$.
\end{proof}

\subsection{Entrainment inside a stoichiometric class}
Finally, we can state our main result which follows by combining Theorem \ref{th.glf_sost} with Proposition \ref{pr.entrainment}:
\begin{theorem}\label{th.entrainment}
	Let a network $\Ns=(\Ss,\Rs)$ be given and assume that it admits a GLF $\tV(r)=\|B\Gamma r\|_\infty$, it has only trivial siphons, and is weakly contractive.
	Let $\Cs$ be a stoichiometric class. Let $K\subset \Cs$ be compact and forward invariant.
	Let $T>0$. Assume that $R(x,t)$   is $T$-periodic. Then for any $t_0\ge0$ there exists a unique solution $\tilde x:[t_0,\infty)\to K$ with period $T$, and $\varphi(t;t_0,a)$ converges to $\tilde x$ for any $a\in K$. 
\end{theorem}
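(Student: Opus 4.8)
The plan is to chain the two results the paper has been building toward: Theorem~\ref{th.glf_sost}, which turns the three structural hypotheses into the CSOST property, and Proposition~\ref{pr.entrainment} (from \cite{margaliot16}), which is the abstract ``CSOST $\Rightarrow$ entrainment'' statement. The hypotheses of the present theorem --- existence of a GLF $\tV(r)=\|B\Gamma r\|_\infty$, only trivial siphons, weak contractivity, together with $K\subset\Cs$ compact and forward invariant --- are exactly those of Theorem~\ref{th.glf_sost}, so that theorem gives at once that the time-varying system \eqref{e.tvode} restricted to $K$ is CSOST with respect to $|\cdot|_B$. What remains is to feed this into Proposition~\ref{pr.entrainment} and quote its conclusion.

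Concretely, I would first record that \eqref{e.tvode} has the regularity required by Definition~\ref{sost}: $f(t,x)=\Gamma R(x,t)$ is $C^1$ in $x$ and continuous in $t$ by the standing assumptions on the time-varying kinetics, and $f$ is $T$-periodic because $R(x,t)$ is. Next, to apply Proposition~\ref{pr.entrainment} literally --- it is phrased for a genuine norm on $\mathbb R^n$, whereas $|\cdot|_B$ is only a norm on the affine hull $\bx+\Imgamma$ of $\Cs$ --- I would pass to the reduced dynamics of \eqref{e.red_Sys}: fixing $\bx\in\Cs$ and setting $\tl z_1=S_1(x-\bx)$ gives the ODE $\dot{\tl z}_1=S_1\Gamma R(U_1\tl z_1+\bx,t)$ on $\mathbb R^{n-d}$, under which $|\cdot|_B$ becomes the honest norm $\tl z_1\mapsto\|BU_1\tl z_1\|_\infty$ (a norm on all of $\mathbb R^{n-d}$ because $\ker B|_{\Imgamma}=\{0\}$ by Lemma~\ref{lem.linalg}), while CSOST, $T$-periodicity and forward invariance of the domain are preserved by this linear isomorphism between $\Cs$ and its image. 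Applying Proposition~\ref{pr.entrainment} in these coordinates produces a unique $T$-periodic solution $\tl z_1^\ast(\cdot)$ attracting every trajectory starting in the image of $K$; transporting back via $x=U_1\tl z_1+\bx$ yields the desired $\tilde x(\cdot)$, its uniqueness, and $\varphi(t;t_0,a)\to\tilde x$ for all $a\in K$.

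The one point needing genuine care, and which I expect to be the main obstacle, is the convexity demanded of the domain in Definition~\ref{sost} and Proposition~\ref{pr.entrainment}: the theorem only posits that $K$ is compact and forward invariant. I would handle this by running the argument on a compact convex forward-invariant set $D$ with $K\subset D\subset\Cs$ and then restricting the conclusion back to $K$ (legitimate, since $K$ is itself forward invariant). When $\Cs$ is a proper stoichiometric class one may simply take $D=\Cs$, which is closed, bounded, convex and forward invariant. In the general case one constructs $D$ from the convex sublevel sets of the GLF-induced distance $|\cdot|_B$, using that each frozen-time Jacobian $\Gamma\,\partial R/\partial x(\cdot,t^\ast)$ still satisfies the bound of Lemma~\ref{mainlemmaB}, so a sufficiently large such sublevel set intersected with $\Cs$ is forward invariant for \eqref{e.tvode} and contains $K$; Theorem~\ref{th.glf_sost} then applies on $D$, and the two cited results close the argument. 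Apart from this bookkeeping I do not anticipate any real difficulty, since the mathematical substance already resides in Theorem~\ref{th.glf_sost} and in Proposition~\ref{pr.entrainment}.
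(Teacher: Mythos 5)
Your proposal follows exactly the paper's route: the paper proves this theorem in one line by combining Theorem~\ref{th.glf_sost} (CSOST) with Proposition~\ref{pr.entrainment}. The extra care you take over the two technical points the paper leaves implicit --- passing to the reduced coordinates of \eqref{e.red_Sys} so that $|\cdot|_B$ becomes a genuine norm on $\mathbb R^{n-d}$, and supplying a convex forward-invariant domain as required by Definition~\ref{sost} --- is legitimate and in fact fills gaps the paper glosses over, but it does not change the substance of the argument.
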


\begin{remark}
	In \cite{margaliot16}, the notion of contraction after small transients has been introduced. In our context, if a network $\Ns$ admits a GLF of the form $\tV(r)=\|\Theta \Gamma r\|_\infty$ for some nonzero nonnegative diagonal $\Theta$,  then it is contractive with small transients. This is since no scaling of the norm is required as shown in Theorem \ref{th.strict_contraction}.
\end{remark}

\section{Biochemical Examples}

Although our methods are very general and can apply to networks in the micro- and macro-scales, we   focus on several well-established and accepted models of core biochemical modules which are involved in   cellular processes. They constitute     elemental building blocks of biochemical pathways and  cascades. %

 We review next some biochemical motifs which are amenable to our results. {The examples are depicted graphically in Figure \ref{f.networks} using the Petri-net convention \cite{petri08}, where a reaction is represented by a {rectangle}, and a species is repres{e}nted by a circle.} Other examples and more complex cascades can be analyzed too as discussed in \cite{MA_LEARN,MA_MCSS23}. All the examples presented in this section are conservative and have only trivial siphons.
\begin{figure}[t]
	\centering
	\subfigure[A single PTM cycle]{\scalebox{0.5515}{%
\begin{tikzpicture}[node distance=1.5cm,>=stealth',bend angle=45,auto]
\tikzstyle{S}=[circle,thick,draw=black,fill=black,minimum size=2.5mm]
\tikzstyle{S1}=[circle,thick,draw=gray,fill=gray,minimum size=2.5mm]
\tikzstyle{R}=[rectangle,very thin,draw=black,
fill=black,minimum width=.01in, minimum height=9mm]
\begin{scope}
\node [S1] (XE_1)    {};

\node [R] (R1) [right of=XE_1,label=above:$R_2$] {}
edge [<-]                  (XE_1);
\node [R] (R2) [left of=XE_1,label=above:$R_1$] {}
edge [<->]                  (XE_1);

\node [S1] (YE_2) [below of=XE_1] {};
\node [R] (R3) [right of=YE_2,label=above:$R_3$] {}
edge [<->]                  (YE_2);
\node [R] (R4) [left of=YE_2,label=above:$R_4$] {}
edge [<-]                  (YE_2);

\node [S] (Y) [below=0.8cm, right=2cm, label=right:$S^+$]   {};
\node [S] (X4) [below=0.8cm, left=2cm, label=left:$S$]   {};

\node [S] (E_1) [above of=XE_1, label=below:Kinase]   {};
\node [S] (E_2) [below of=YE_2, label=above:Phosphatase]   {};

\draw[->] (R1) .. controls +(0:0.4cm) and +(90:0.4cm) .. (Y);
\draw[<->] (Y) .. controls +(-90:0.4cm) and +(20:0.4cm) .. (R3);
\draw[->] (R4) .. controls +(-180:0.4cm) and +(-90:0.4cm) .. (X4);
\draw[<->] (X4) .. controls +(+90:0.4cm) and +(-160:0.4cm) .. (R2);
\draw[->] (R1) .. controls +(20:1.3cm) and +(0:1.3cm) .. (E_1);
\draw[<->] (E_1) .. controls +(-180:1.3cm) and +(160:1.3cm) .. (R2);
\draw[->] (R4) .. controls +(200:1.3cm) and +(180:1.3cm) .. (E_2);
\draw[<->] (E_2) .. controls +(0:1.3cm) and +(-20:1.3cm) .. (R3);
\end{scope}
\end{tikzpicture}  %
}}
	\subfigure[Three-Body Binding]{	 {\includegraphics[width=0.182\textwidth]{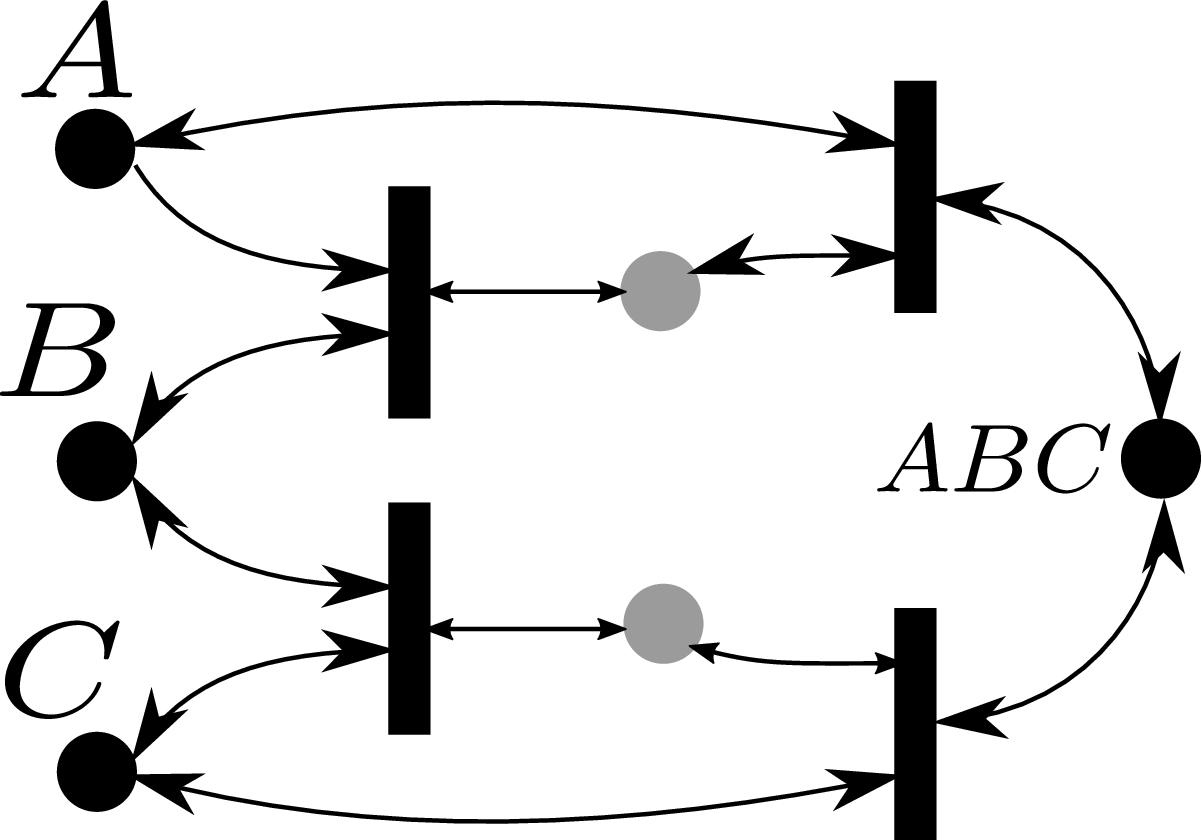} }} 
	\subfigure[Proofreading network]{{\includegraphics[width=0.262\textwidth]{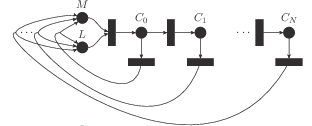} } }
	\subfigure[A Phosphorelay]{\includegraphics[width=0.28\textwidth]{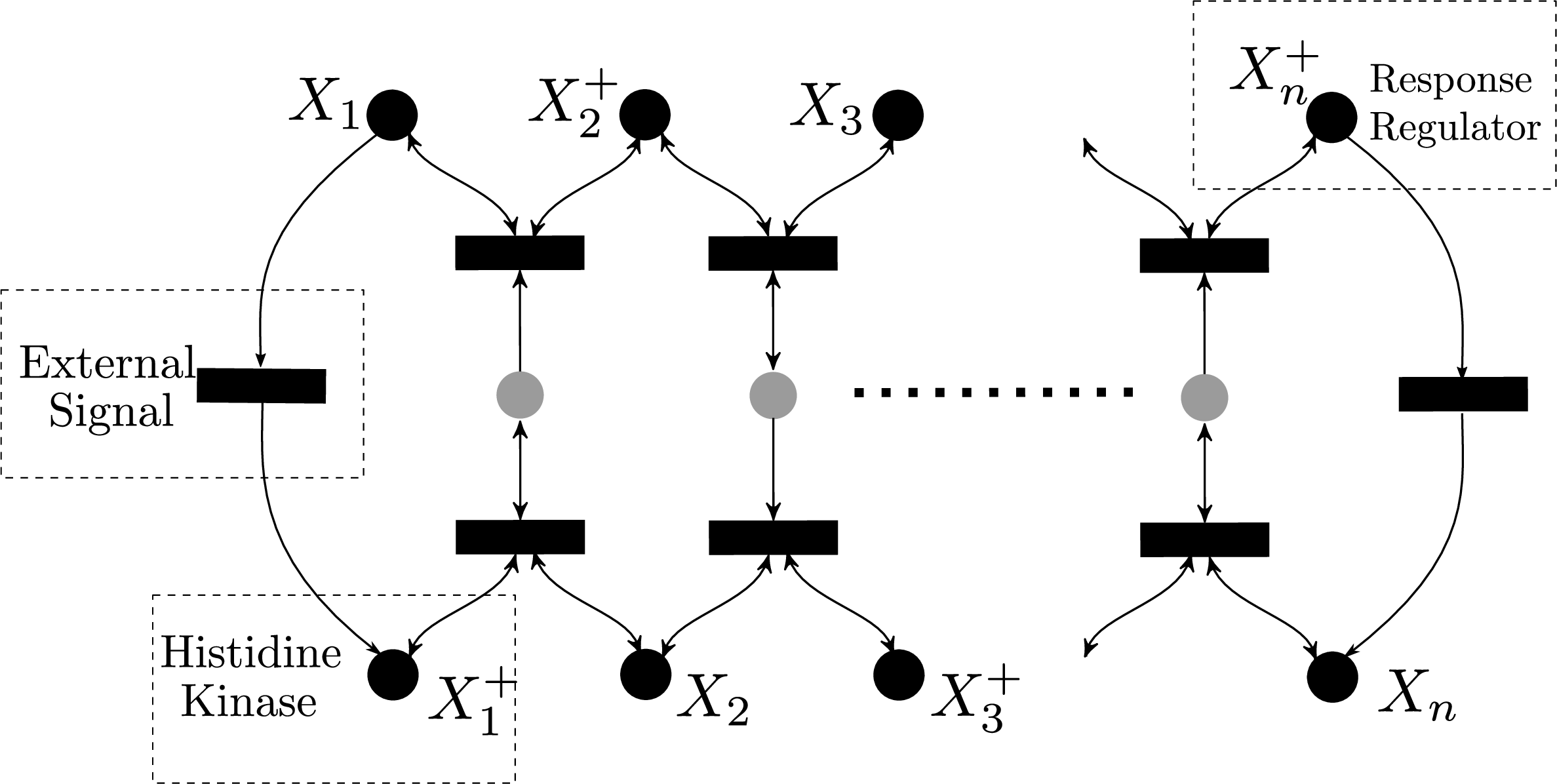}}
	\caption{Examples of nonlinear biochemical motifs analyzed in section 6.}\label{f.networks}
\end{figure}

\subsection{Post-translational modification (PTM)}  After a protein is translated from mRNA, it can be enzymatically modified by the attachment of a chemical group (e.g., a phosphate) to a specific amino-acid residue. PTMs are involved in regulating many cellular processes, since they allow a protein to change its behavior to respond in a timely manner to environmental changes and cellular signals. Fig.~\ref{f.networks}(a) depicts a PTM of a protein substrate $S$ based on the widely-accepted model introduced in 1981 \cite{goldbeter81}, and has been experimentally studied  \cite{cimino87} soon after. It has been widely used to describe various biochemical systems  \cite{meinke86,ferrell96,kholodenko02,iglesias02,samoilov05,jiang11}.  The long-term dynamics of the single PTM cycle have been first studied in \cite{angeli08} using monotone system techniques. Multiple cascades and alternative forms have been studied in \cite{MA_LEARN,MA_MCSS23}.

A simplified network was already presented in \eqref{e.ptm}. Here, we consider the full PTM given below:
	\begin{equation}\label{e.ptm_rev} \begin{array}{rl}  S+ E \xrightleftharpoons[]{} C_1 \xrightarrow{} P+E \\ P+D \xrightleftharpoons[]{} C_2 \xrightarrow{} S+D \end{array}.\end{equation}
	
	As mentioned in \S 2.3, we can write a max-min GLF for network in the form $\tV(r)=\|Cr\|_{\infty}$. The stoichiometric matrix $\Gamma$ and the matrix $C$ can be written as follows:
	{\tiny \[ \Gamma=    \left[ \begin{array}{rrrrrr} -1 & 1 & 0 & 0 & 0 & 1\\ -1 & 1 & 1 & 0 & 0 & 0\\ 1 & -1 & -1 & 0 & 0 & 0\\ 0 & 0 & 1 & -1 & 1 & 0\\ 0 & 0 & 0 & -1 & 1 & 1\\ 0 & 0 & 0 & 1 & -1 & -1 \end{array}\right], ~ C=    \left[\begin{array}{rrrrrr} 0 & 0 & 1 & 0 & 0 & -1\\ -1 & 1 & 1 & 0 & 0 & 0\\ 0 & 0 & 1 & -1 & 1 & 0\\ -1 & 1 & 0 & 0 & 0 & 1\\ 0 & 0 & 0 & -1 & 1 & 1\\ 1 & -1 & 0 & -1 & 1 & 0 \end{array}\right].
	\]}
Using Corollary \ref{cor}, we find matrices $\Lambda_1,\dots,\Lambda_8$ via the subroutine \texttt{FindLambda.m} {using the command \texttt{FindLambda($\Gamma$,C)}}:
{\tiny 
\begin{align*}
\Lambda_1&=\left[\begin{array}{cccccc} 0 & 0 & 0 & 0 & 0 & 0\\ 1 & -1 & 0 & 0 & 0 & 0\\ 0 & 0 & 0 & 0 & 0 & 0\\ 0 & 0 & 0 & -1 & 0 & 0\\ 0 & 0 & 0 & 0 & 0 & 0\\ 0 & 0 & 0 & 0 & 1 & -1 \end{array}\right],
\Lambda_2=\left[\begin{array}{cccccc} 0 & 0 & 0 & 0 & 0 & 0\\ 0 & -1 & 0 & 0 & 0 & 0\\ 0 & 0 & 0 & 0 & 0 & 0\\ -1 & 0 & 0 & -1 & 0 & 0\\ 0 & 0 & 0 & 0 & 0 & 0\\ 0 & 0 & 1 & 0 & 0 & -1 \end{array}\right],
\Lambda_3=\left[\begin{array}{cccccc} 0 & 0 & 0 & 0 & 0 & 0\\ 0 & -1 & 0 & 0 & 0 & 0\\ 0 & 0 & 0 & 0 & 0 & 0\\ -1 & 0 & 0 & -1 & 0 & 0\\ 0 & 0 & 0 & 0 & 0 & 0\\ 0 & 0 & 1 & 0 & 0 & -1 \end{array}\right], \\
\Lambda_4&=\left[\begin{array}{cccccc} -1 & 0 & 0 & -1 & 0 & 0\\ 0 & -1 & 0 & 0 & 0 & 0\\ 0 & 0 & -1 & 0 & 0 & 1\\ 0 & 0 & 0 & 0 & 0 & 0\\ 0 & 0 & 0 & 0 & 0 & 0\\ 0 & 0 & 0 & 0 & 0 & 0 \end{array}\right],
\Lambda_5=\left[\begin{array}{cccccc} 0 & 0 & 0 & 0 & 0 & 0\\ 0 & 0 & 0 & 0 & 0 & 0\\ 0 & 0 & -1 & 0 & 0 & 0\\ 0 & 0 & 0 & 0 & 0 & 0\\ -1 & 0 & 0 & 0 & -1 & 0\\ 0 & -1 & 0 & 0 & 0 & -1 \end{array}\right],
\Lambda_6=\left[\begin{array}{cccccc} 0 & 0 & 0 & 0 & 0 & 0\\ 0 & 0 & 0 & 0 & 0 & 0\\ 1 & 0 & -1 & 0 & 0 & 0\\ 0 & 0 & 0 & 0 & 0 & 0\\ 0 & 0 & 0 & 0 & -1 & 0\\ 0 & 0 & 0 & -1 & 0 & -1 \end{array}\right], \\
\Lambda_7&=\left[\begin{array}{cccccc} 0 & 0 & 0 & 0 & 0 & 0\\ 0 & 0 & 0 & 0 & 0 & 0\\ 1 & 0 & -1 & 0 & 0 & 0\\ 0 & 0 & 0 & 0 & 0 & 0\\ 0 & 0 & 0 & 0 & -1 & 0\\ 0 & 0 & 0 & -1 & 0 & -1 \end{array}\right],
\Lambda_8=\left[\begin{array}{cccccc} -1 & 0 & 1 & 0 & 0 & 0\\ 0 & 0 & 0 & 0 & 0 & 0\\ 0 & 0 & 0 & 0 & 0 & 0\\ 0 & 0 & 0 & -1 & 0 & -1\\ 0 & 0 & 0 & 0 & -1 & 0\\ 0 & 0 & 0 & 0 & 0 & 0 \end{array}\right]
\end{align*}
}
Using Lemma \ref{mainlemmaB}, the logarithmic norm of the Jacobian is upper bounded by 
$\mu_\infty(\sum_{\ell=1}^s \rho_\ell \Lambda_\ell)$, where $\sum_{\ell=1}^s \rho_\ell \Lambda_\ell$ can be written as:
{\tiny \[    \left[\begin{array}{cccccc} -\rho_{4}-\rho_{8} & 0 & \rho_{8} & -\rho_{4} & 0 & 0\\ \rho_{1} & -\rho_{1}-\rho_{2}-\rho_{3}-\rho_{4} & 0 & 0 & 0 & 0\\ \rho_{6}+\rho_{7} & 0 & -\rho_{4}-\rho_{5}-\rho_{6}-\rho_{7} & 0 & 0 & \rho_{4}\\ -\rho_{2}-\rho_{3} & 0 & 0 & -\rho_{1}-\rho_{2}-\rho_{3}-\rho_{8} & 0 & -\rho_{8}\\ -\rho_{5} & 0 & 0 & 0 & -\rho_{5}-\rho_{6}-\rho_{7}-\rho_{8} & 0\\ 0 & -\rho_{5} & \rho_{2}+\rho_{3} & -\rho_{6}-\rho_{7} & \rho_{1} & -\rho_{1}-\rho_{2}-\rho_{3}-\rho_{5}-\rho_{6}-\rho_{7} \end{array}\right]
\]}
Note that $\mu_\infty(\sum_{\ell=1}^s \rho_\ell \Lambda_\ell)=0$. Using the notation defined in \S \ref{s.weakcontraction}, we {analyze this network manually to find that} it is weakly contractive with $S_0=\{1,6\}$. Therefore the contractor matrix is given as $P_\theta=\diag([1,1+\theta,1+\theta,1+\theta,1+\theta,1])$. The scaled norm is given as $P_\theta B$. Remember that $B$ is not {unique}, one such choice provided by the subroutine \texttt{FindBfromC.m} in the package \texttt{LEARN} is:
    {\tiny\[ P_\theta B= \frac 14 \left[\begin{array}{cccccc} -2 & 1 & -1 & 2 & -1 & 1\\ 0 & 2\,\theta +2 & -2 \theta -2 & 0 & 0 & 0\\ 0 & 0 & 0 & 4 \theta +4 & 0 & 0\\ 4 \theta +4 & 0 & 0 & 0 & 0 & 0\\ 0 & 0 & 0 & 0 & 2 \theta +2 & -2 \theta -2\\ -2 & -1 & 1 & 2 & 1 & -1 \end{array}\right].\]}
    
Hence, for an {arbitrary} given positive compact set in a stoichiometric class, the trajectories are strictly contractive with respect to the $P_\theta B$-weighted $\infty$-norm, where $\theta>0$ is small enough depending on the choice of the compact set. In addition, the trajectories  entrain to periodic time-varying parameters. 

\subsection{Three-Body Binding} 

As introduced before,  \emph{three-body binding} \cite{douglass13} describes the binding of two molecules via a bridging molecule to form a ternary complex. Examples include T-cell receptors with toxins \cite{saline10}  and supramolecular assembly \cite{sagar17}. 

Consider the three-body network shown in \eqref{three_body} and depicted in Figure \ref{f.networks}(b). 
The reactions are listed in \eqref{three_body}, let $x_1=[{\rm A}], x_2=[{\rm B}], x_3=[{\rm C}], x_4=[{\rm AB}], x_5=[{\rm BC}],x_6=[{\rm ABC}].$%
  Using \texttt{FindLambda} subroutine, the matrices $\Lambda_1,\dots,\Lambda_{12}$ are given {using the command \texttt{FindLambda($\Gamma$,C)}}: 
{\tiny \begin{align*}\Lambda_1&=
\left[\begin{array}{cccccc} -1 & 0 & 0 & 0 & 0 & 0\\ 0 & -1 & 0 & 0 & -1 & 0\\ 0 & 0 & 0 & 0 & 0 & 0\\ 0 & 0 & 0 & -1 & 0 & -1\\ 0 & 0 & 0 & 0 & 0 & 0\\ 0 & 0 & 0 & 0 & 0 & 0 \end{array}\right],
\Lambda_2=\left[\begin{array}{cccccc} -1 & 0 & 0 & 0 & 0 & 0\\ 0 & 0 & 0 & 0 & 0 & 0\\ 0 & 0 & 0 & 0 & 0 & 0\\ 0 & 0 & 0 & 0 & 0 & 0\\ 0 & -1 & 0 & 0 & -1 & 0\\ 0 & 0 & 0 & -1 & 0 & -1 \end{array}\right], 
\Lambda_3=
\left[\begin{array}{cccccc} -1 & 0 & 0 & 0 & 1 & 0\\ 0 & -1 & 0 & 0 & 0 & 0\\ 0 & 0 & 0 & 0 & 0 & 0\\ 0 & 0 & 1 & -1 & 0 & 0\\ 0 & 0 & 0 & 0 & 0 & 0\\ 0 & 0 & 0 & 0 & 0 & 0 \end{array}\right], \\
\Lambda_4&=\left[\begin{array}{cccccc} 0 & 0 & 0 & 0 & 0 & 0\\ 0 & -1 & 0 & 0 & 0 & 0\\ 0 & 0 & -1 & 1 & 0 & 0\\ 0 & 0 & 0 & 0 & 0 & 0\\ 1 & 0 & 0 & 0 & -1 & 0\\ 0 & 0 & 0 & 0 & 0 & 0 \end{array}\right],  
\Lambda_5  =
\left[\begin{array}{cccccc} 0 & 0 & 0 & 0 & 0 & 0\\ 0 & -1 & 0 & -1 & 0 & 0\\ 0 & 0 & -1 & 0 & 0 & 0\\ 0 & 0 & 0 & 0 & 0 & 0\\ 0 & 0 & 0 & 0 & -1 & -1\\ 0 & 0 & 0 & 0 & 0 & 0 \end{array}\right], 
\Lambda_6=
\left[\begin{array}{cccccc} 0 & 0 & 0 & 0 & 0 & 0\\ 0 & 0 & 0 & 0 & 0 & 0\\ 0 & 0 & -1 & 0 & 0 & 0\\ 0 & -1 & 0 & -1 & 0 & 0\\ 0 & 0 & 0 & 0 & 0 & 0\\ 0 & 0 & 0 & 0 & -1 & -1 \end{array}\right], \\
\Lambda_7&=
\left[\begin{array}{cccccc} -1 & 0 & 0 & 0 & 0 & -1\\ 0 & -1 & 1 & 0 & 0 & 0\\ 0 & 0 & 0 & 0 & 0 & 0\\ 0 & 0 & 0 & -1 & 0 & 0\\ 0 & 0 & 0 & 0 & 0 & 0\\ 0 & 0 & 0 & 0 & 0 & 0 \end{array}\right],
\Lambda_8=
\left[\begin{array}{cccccc} 0 & 0 & 0 & 0 & 0 & 0\\ 0 & 0 & 0 & 0 & 0 & 0\\ 0 & 1 & -1 & 0 & 0 & 0\\ 0 & 0 & 0 & -1 & 0 & 0\\ 0 & 0 & 0 & 0 & 0 & 0\\ -1 & 0 & 0 & 0 & 0 & -1 \end{array}\right], 
\Lambda_9 =
\left[\begin{array}{cccccc} -1 & 1 & 0 & 0 & 0 & 0\\ 0 & 0 & 0 & 0 & 0 & 0\\ 0 & 0 & 0 & 0 & 0 & 0\\ 0 & 0 & 0 & 0 & 0 & 0\\ 0 & 0 & 0 & 0 & -1 & 0\\ 0 & 0 & -1 & 0 & 0 & -1 \end{array}\right], \\
\Lambda_{10}&=
\left[\begin{array}{cccccc} 0 & 0 & 0 & 0 & 0 & 0\\ 1 & -1 & 0 & 0 & 0 & 0\\ 0 & 0 & -1 & 0 & 0 & -1\\ 0 & 0 & 0 & 0 & 0 & 0\\ 0 & 0 & 0 & 0 & -1 & 0\\ 0 & 0 & 0 & 0 & 0 & 0 \end{array}\right], 
\Lambda_{11}
\left[\begin{array}{cccccc} 0 & 0 & 0 & 0 & 0 & 0\\ 0 & 0 & 0 & 0 & 0 & 0\\ 0 & 0 & -1 & 0 & -1 & 0\\ -1 & 0 & 0 & -1 & 0 & 0\\ 0 & 0 & 0 & 0 & 0 & 0\\ 0 & 0 & 0 & 0 & 0 & -1 \end{array}\right], 
\Lambda_{12} =
\left[\begin{array}{rrrrrr} -1 & 0 & 0 & -1 & 0 & 0\\ 0 & 0 & 0 & 0 & 0 & 0\\ 0 & 0 & 0 & 0 & 0 & 0\\ 0 & 0 & 0 & 0 & 0 & 0\\ 0 & 0 & -1 & 0 & -1 & 0\\ 0 & 0 & 0 & 0 & 0 & -1 \end{array}\right].
\end{align*} }
Therefore the expression $\sum_{\ell=1}^s \rho_\ell \Lambda_\ell$ can be written {using the command \texttt{FindLambda($\Gamma$,C)}}:
{\tiny\[   \left[\begin{array}{rrrrrr} -{\displaystyle\sum_{k\in\{1,2,3,7,9,12\}}} \!\! \rho_{k} & \rho_{9} & 0 & -\rho_{12} & \rho_{3} & -\rho_{7}\\ \rho_{10} & 
-{\displaystyle\sum_{k\in\{1,3,4,5,7,10\}}}\!\!  \rho_k & \rho_{7} & -\rho_{5} & -\rho_{1} & 0\\ 0 & \rho_{8} & 
-{\displaystyle\sum_{k\in\{4,5,6,8,10,11\}}} \!\! \rho_k  & \rho_{4} & -\rho_{11} & -\rho_{10}\\ -\rho_{11} & -\rho_{6} & \rho_{3} & 
-{\displaystyle\sum_{k\in\{1,3,6,7,8,11\}}} \!\! \rho_{k}   & 0 & -\rho_{1}\\ \rho_{4} & -\rho_{2} & -\rho_{12} & 0 & 
-{\displaystyle\sum_{k\in\{2,4,5,9,10,12\}}} \!\! \rho_{k}   & -\rho_{5}\\ -\rho_{8} & 0 & -\rho_{9} & -\rho_{2} & -\rho_{6} & 
-{\displaystyle\sum_{k\in\{2,6,8,9,11,12\}}} \!\! \rho_{k}   \end{array}\right] 
\]}

Then, we get {(provided by \texttt{FindLambda.m})}:
\[\mu_{\infty}\left (\sum_\ell \rho_\ell \Lambda_\ell \right)\le -\min\{\rho_{1}+\rho_{2},\rho_{3}+\rho_{4},\rho_{5}+\rho_{6}, \rho_{7}+\rho_{8},\rho_{9}+\rho_{10},\rho_{11}+\rho_{12}\} <0, \]
which is strictly negative over any positive compact set. Note $P_\theta=I, B=I$. Hence, for an {arbitrary} given positive compact set in a stoichiometric class, the trajectories are strictly contractive with respect to the standard $\infty$-norm. In addition, the trajectories entrain to periodic time-varying parameters. 

{As a numerical illustration of the results, we depict simulations of the three-body binding system in Figure \ref{f.three_Body_sim}. Figure \ref{f.three_Body_sim}(a) shows the non-expansivity of the norm with $B=I$. With 1000 numerically generated trajectories, the maximum logarithmic norm recorded is   $-7.1636\times 10^{-5} $.  Figure \ref{f.three_Body_sim}(b) shows the trajectories converging to the same steady state in the stoichiometric class.  Figure \ref{f.three_Body_sim}(c) shows that all the states entrain to a periodic change in the parameters.
}

\begin{figure}
	 \subfigure[]{\includegraphics[width=0.32\textwidth]{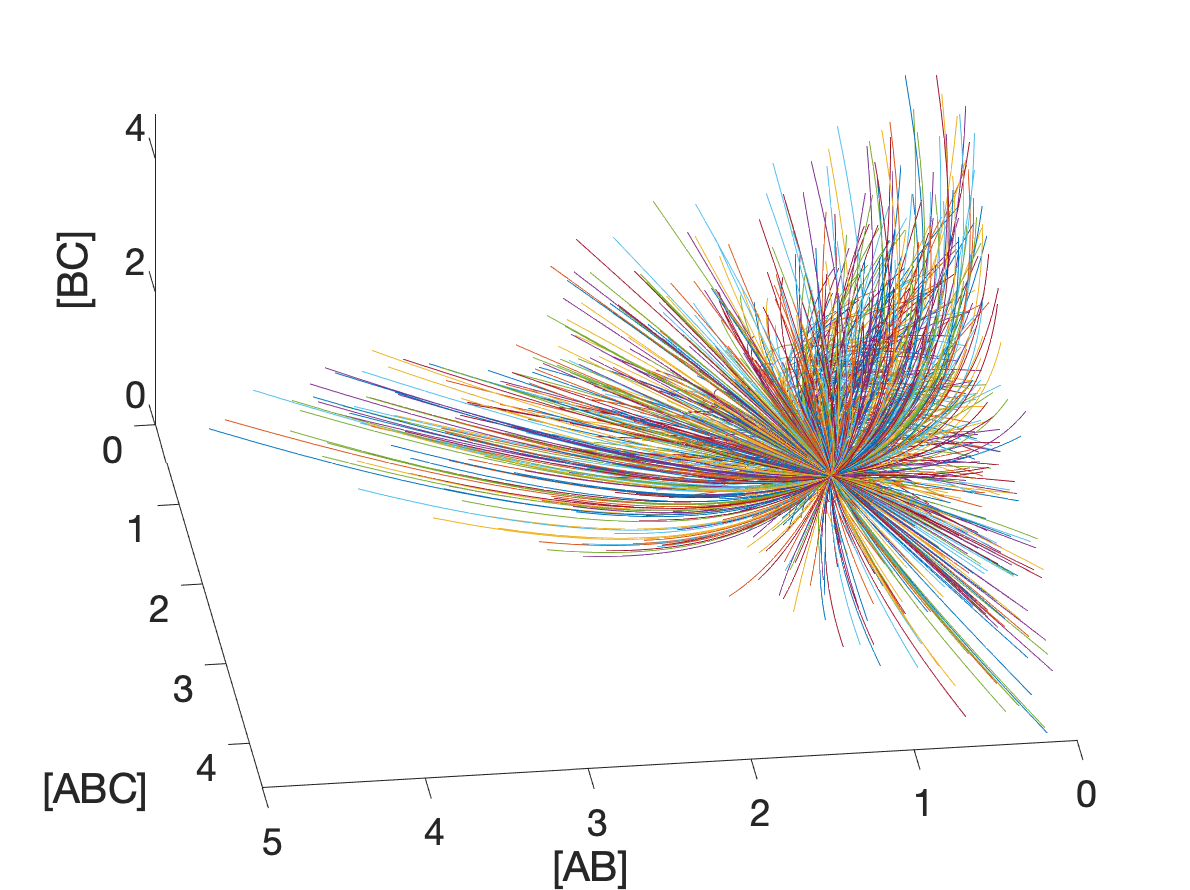}}
	 	 \subfigure[]{\includegraphics[width=0.32\textwidth]{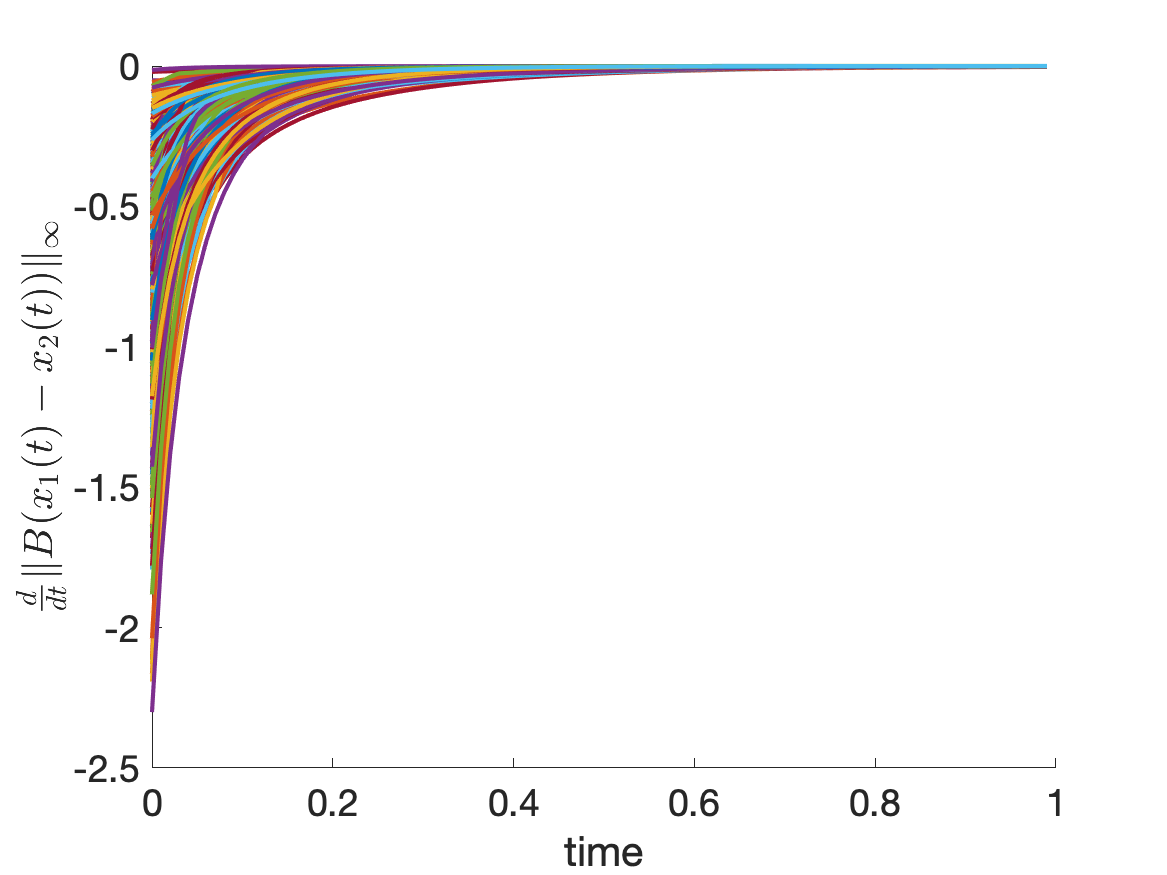}}
	 	 	 \subfigure[]{\includegraphics[width=0.32\textwidth]{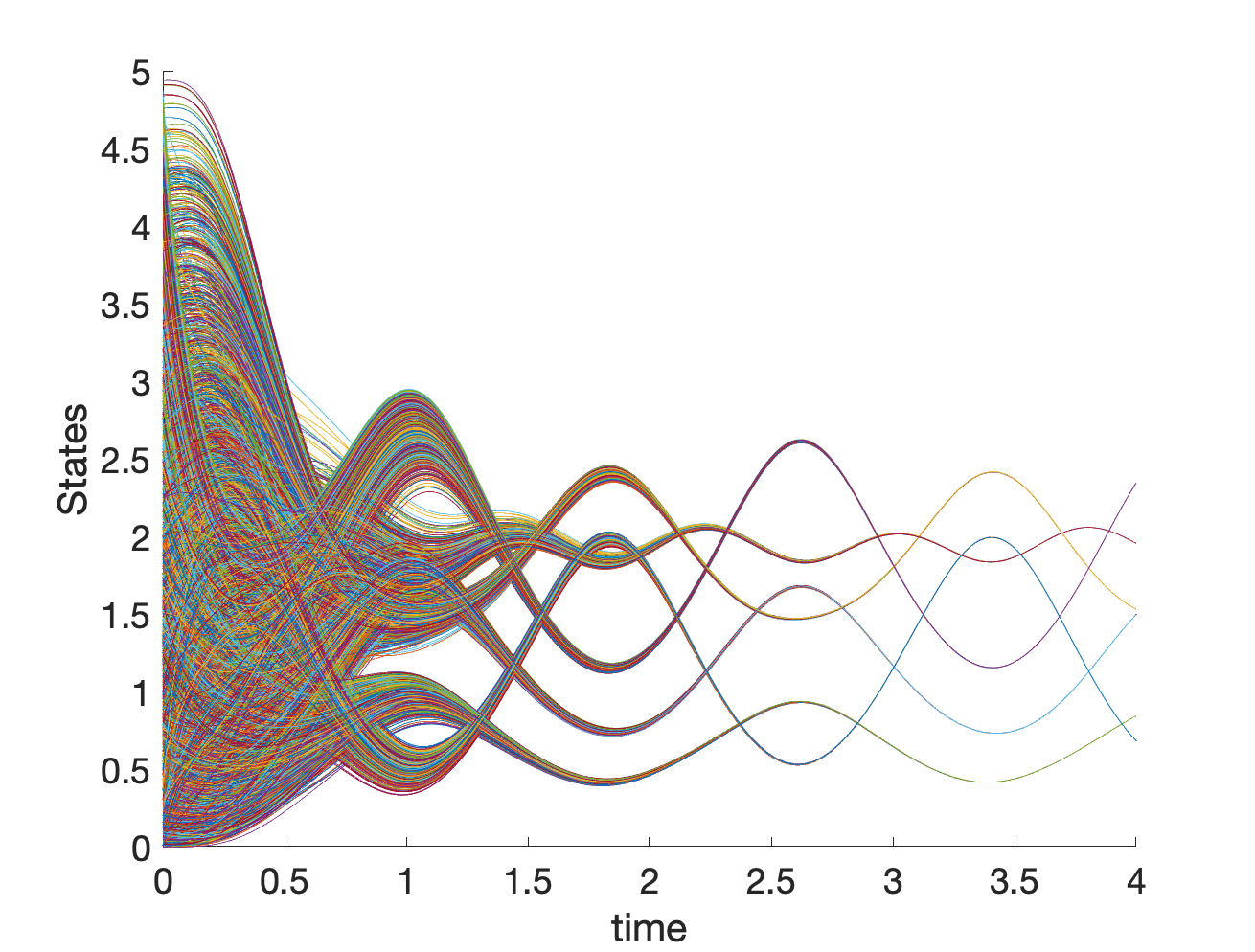}}
	 	 	 \caption{{Simulations of the three-body binding network using the norm found by \texttt{LEARN}. (a) A simulation of 1000 trajectories in the stoichiometric class {corresponding} to $[A]_{tot}=[B]_{tot}=[C]_{tot}=5$, for the ODE with mass-action kinetics and all kinetic constants set to 1. (b) The numerically-calculated time-derivative of the $B$-distance between 1000 pairs of {randomly} generated trajectories for the same ODE system in panel (a). Each pair of trajectories start{s} in the same stoichiometric class. (c) Time-series plots of all six states with $[A]_{tot}=[B]_{tot}=[C]_{tot}=5$ and reaction-rates following mass-action kinetics. The periodically-varying kinetic constants are given as $k_1=k_5=k_6=k_3=k_8=\sin^2(2t), k_2=k_4=k_7=\cos^2(2t)$.}}
	 	 	 \label{f.three_Body_sim}
\end{figure}
 
\subsection{Kinetic proo{f}reading in $T$-cell activation}   The kinetic proofreading model introduced {was} by \cite{hopfield74}. It was repurposed by McKeithan \cite{mckeithan95,sontag01} to model $T$-cell activation, and is supported experimentally \cite{gaud18}. In this model, a ligand $M$ binds to a $T$-cell receptor. The receptor-ligand complex then undergoes a series of reactions to produce an active complex $C_N$.  Figure~\ref{f.networks}(c) depicts the {BIN}. The reactions can be written as follows:
\begin{align*} M+L & \rightleftharpoons C_0 \lra C_1 \lra \dots \lra C_N \\
C_1 & \lra M+L, C_2 \lra M+L,\dots, C_N \lra M+L .
\end{align*}

Let $N=2$. We use \texttt{LEARN} to find the matrix $C$. Hence,  the pair $\Gamma,C$ can be written as follows:
{\tiny \[      \Gamma=\left[\begin{array}{rrrrrr} -1 & 1 & 0 & 0 & 1 & 1\\ -1 & 1 & 0 & 0 & 1 & 1\\ 1 & -1 & -1 & 0 & 0 & 0\\ 0 & 0 & 1 & -1 & -1 & 0\\ 0 & 0 & 0 & 1 & 0 & -1 \end{array}\right],
	 ~     C= \left[\begin{array}{rrrrrr} 0 & 0 & 0 & 1 & 0 & -1\\ 0 & 0 & 1 & -1 & -1 & 0\\ 0 & 0 & 1 & 0 & -1 & -1\\ 1 & -1 & -1 & 0 & 0 & 0\\ 1 & -1 & -1 & 1 & 0 & -1\\ 1 & -1 & 0 & -1 & -1 & 0\\ 1 & -1 & 0 & 0 & -1 & -1 \end{array}\right].
	 \]}
	 
The matrices $\Lambda_1, \dots, \Lambda_7$ can be found, and the expression $\sum_\ell \rho_\ell \Lambda_\ell$ can be written {using the command \texttt{FindLambda($\Gamma$,C)}}:
{\tiny
\[
\left[\begin{array}{rrrrrrc} -\rho_{5}-\rho_{7} & 0 & \rho_{5} & 0 & 0 & 0 & 0\\ 0 & -\rho_{4}-\rho_{5}-\rho_{6} & 0 & 0 & 0 & \rho_{4} & 0\\ \rho_{6} & \rho_{7} & -\rho_{4}-\rho_{6}-\rho_{7} & 0 & 0 & 0 & \rho_{4}\\ 0 & 0 & -\rho_{1}-\rho_{2} & -\rho_{1}-\rho_{2}-\rho_{3}-\rho_{4} & 0 & 0 & 0\\ \rho_{3}+\rho_{4} & -\rho_{1}-\rho_{2} & 0 & \rho_{7} & -\rho_{1}-\rho_{2}-\rho_{3}-\rho_{4}-\rho_{5}-\rho_{7} & 0 & \rho_{5}\\ -\rho_{1}-\rho_{2} & \rho_{3} & 0 & \rho_{5}+\rho_{6} & 0 & -\rho_{1}-\rho_{2}-\rho_{3}-\rho_{5}-\rho_{6} & 0\\ 0 & 0 & \rho_{3} & 0 & \rho_{6} & \rho_{7} & -\rho_{1}-\rho_{2}-\rho_{3}-\rho_{6}-\rho_{7} \end{array}\right].
\]
}
Note that $\mu_\infty(\sum_\ell \rho_\ell \Lambda_\ell)=0$. {We {perform} weak contractivity analysis manually.} The network is weakly contractive with $S_0=\{3,5,6\}$ and $S_-=\{1,2,4,7\}$.  We find a matrix $B$ satisfying $C=B\Gamma$ using {$\texttt{FindBfromC.m}$}, and the weighting matrix for the norm can be written as:
\[
P_\theta B =     \frac 14 \left[\begin{array}{ccccc} 0 & 0 & 0 & 0 & 4\,\theta +4\\ 0 & 0 & 0 & 4\,\theta +4 & 0\\ -1 & -1 & -2 & 2 & 2\\ 0 & 0 & 4\,\theta +4 & 0 & 0\\ -1 & -1 & 2 & -2 & 2\\ -1 & -1 & 2 & 2 & -2\\ -2\,\theta -2 & -2\,\theta -2 & 0 & 0 & 0 \end{array}\right]. 
\]
Hence, for an {arbitrary} given positive compact set in a stoichiometric class, the trajectories are strictly contractive with respect to the $P_\theta B$-weighted $\infty$-norm, where $\theta>0$ is small enough depending on the choice of the compact set. More exactly, let $\wt K$ be the minimal compact set of the form \eqref{e.K_tilde} that contains $K$. Then, the upper bound $\bar \theta$ that appears in Theorem \ref{th.strict_contraction} can be written as:
{\[ \bar\theta=\min_{x\in\wt K}\left\{ \frac{\rho_7}{\rho_5}, \frac{\rho_5+\rho_6}{\rho_4},\frac{\rho_4+\rho_3}{\rho_1+\rho_2},\frac{\rho_2+\rho_1}{\rho_3+\rho_6+\rho_7}\right\}>0, \]}
and the contractivity constant appearing in Theorem \ref{th.strict_contraction} can be written as:
{\footnotesize\begin{align*} c&= -\min_{x\in\wt K}\left\{ \rho_7 - \theta \rho_5, \rho_6+\rho_5 - \theta \rho_5, \frac{\theta}{\theta+1}(\rho_7+\rho_6+\rho_4),\rho_4+\rho_3-\theta(\rho_1+\rho_2), \frac{\theta}{\theta+1} (\rho_7+\rho_5+\rho_4+\rho_3+\rho_2+\rho_1), \right .\\ & \qquad \qquad \left . \frac{\theta}{\theta+1} (\rho_6+\rho_5+\rho_3+\rho_2+\rho_1), \rho_1+\rho_2 - \theta(\rho_3+\rho_6+\rho_7)\right \}<0, \end{align*}}
where the inequality holds for all $\theta \in(0,\bar\theta)$. 
 In addition, the trajectories  entrain to periodic time-varying parameters.

\subsection{{Phosphorelays}}  	Unlike advanced organisms which rely on PTMs for signalling, bacteria often use a motif known as \textit{phosphotransfer} where   a histidine kinase provides its phosphate   to a response regulator. This motif is a building block in what is known as a two-component signaling system \cite{stock00}. Experimental examples include EnvZ/OmpR system in E. Coli \cite{batchelor03}.  Phosphotransfer cascades are known as phosphorelays \cite{appleby96,laub07}. Examples include Bacillus subtilis \cite{hoch00} and   yeast \cite{posas96}. A general network is depicted in Figure \ref{f.networks}(d). 
The {BIN} for the case $n=2$ can be written as:
\begin{align*} X_2+X_1^+ & \rightleftharpoons C_1 \rightleftharpoons  X_2^++X_1 \\
	 X_3+X_2^+ & \rightleftharpoons C_2 \rightleftharpoons  X_3^++X_2 \\
	 X_1 &\rightarrow X_1^+, ~ X_3^+ \rightarrow X_3.
\end{align*}
The pair $\Gamma, C$ can be written as:
{    \tiny \begin{align*} \Gamma&= \left[\begin{array}{rrrrrrrrrr} -1 & 0 & 0 & 1 & -1 & 0 & 0 & 0 & 0 & 0\\ 1 & -1 & 1 & 0 & 0 & 0 & 0 & 0 & 0 & 0\\ 0 & -1 & 1 & 0 & 0 & 0 & 0 & 1 & -1 & 0\\ 0 & 0 & 0 & 1 & -1 & -1 & 1 & 0 & 0 & 0\\ 0 & 0 & 0 & 0 & 0 & -1 & 1 & 0 & 0 & 1\\ 0 & 0 & 0 & 0 & 0 & 0 & 0 & 1 & -1 & -1\\ 0 & 1 & -1 & -1 & 1 & 0 & 0 & 0 & 0 & 0\\ 0 & 0 & 0 & 0 & 0 & 1 & -1 & -1 & 1 & 0 \end{array}\right], ~
		C  =\left[\begin{array}{rrrrrrrrrrr} -1 & 0 & 0 & 1 & -1 & 0 & 0 & 0 & 0 & 0\\ 1 & -1 & 1 & 0 & 0 & 0 & 0 & 0 & 0 & 0\\ 0 & -1 & 1 & 0 & 0 & 0 & 0 & 1 & -1 & 0\\ 0 & 0 & 0 & 1 & -1 & -1 & 1 & 0 & 0 & 0\\ 0 & 0 & 0 & 0 & 0 & -1 & 1 & 0 & 0 & 1\\ 0 & 0 & 0 & 0 & 0 & 0 & 0 & 1 & -1 & -1\\ 0 & 1 & -1 & -1 & 1 & 0 & 0 & 0 & 0 & 0\\ 0 & 0 & 0 & 0 & 0 & 1 & -1 & -1 & 1 & 0\\ -1 & 0 & 0 & 0 & 0 & 1 & -1 & 0 & 0 & 0\\ 1 & 0 & 0 & 0 & 0 & 0 & 0 & -1 & 1 & 0\\ 0 & 0 & 0 & -1 & 1 & 0 & 0 & 1 & -1 & 0\\ 0 & -1 & 1 & 0 & 0 & 1 & -1 & 0 & 0 & 0\\ 0 & -1 & 1 & 0 & 0 & 0 & 0 & 0 & 0 & 1\\ 0 & 0 & 0 & 1 & -1 & 0 & 0 & 0 & 0 & -1\\ -1 & 0 & 0 & 0 & 0 & 0 & 0 & 0 & 0 & 1 \end{array}\right]
		\end{align*} 
	} 
Let $\Lambda_1,\dots,\Lambda_{14}$ be the matrices that can {be} found using Corollary \ref{cor} {using the command \texttt{FindLambda($\Gamma$,C)}}. Let $\Xi=\sum_{\ell} \rho_\ell \Lambda_\ell$. Then, {using the output of the subroutine}, $\Xi$ can be written as:
 { \tiny  \[ \left[\begin{array}{rrrrrrrrrrrrccc} \Xi_{11} & -\rho_{12} & 0 & 0 & 0 & 0 & 0 & 0 & \rho_{6} & 0 & 0 & 0 & 0 & 0 & 0\\ -\rho_{11} & \Xi_{22} & 0 & 0 & 0 & 0 & -\rho_{1} & 0 & 0 & \rho_{4} & 0 & 0 & 0 & 0 & 0\\ 0 & 0 & \Xi_{33} & 0 & 0 & 0 & 0 & 0 & 0 & -\rho_{3} & \rho_{11} & \rho_{14} & \rho_{9} & 0 & 0\\ 0 & 0 & 0 & \Xi_{44} & 0 & 0 & 0 & 0 & -\rho_{2} & 0 & -\rho_{13} & -\rho_{12} & 0 & \rho_{8} & 0\\ 0 & 0 & 0 & 0 & \Xi_{55}& -\rho_{13} & 0 & -\rho_{10} & 0 & 0 & 0 & 0 & 0 & -\rho_{7} & 0\\ 0 & 0 & 0 & 0 & -\rho_{14} &\Xi_{66} & 0 & 0 & 0 & 0 & 0 & 0 & -\rho_{5} & 0 & 0\\ -\rho_{3} & -\rho_{2} & 0 & 0 & 0 & 0 &\Xi_{77} & 0 & 0 & 0 & \rho_{4} & -\rho_{6} & 0 & 0 & 0\\ 0 & 0 & 0 & 0 & -\rho_{9} & -\rho_{8} & 0 & \Xi_{88} & 0 & 0 & -\rho_{7} & \rho_{5} & 0 & 0 & 0\\ \rho_{7} & 0 & 0 & -\rho_{1} & 0 & 0 & 0 & 0 & \Xi_{99}& -\rho_{13} & 0 & 0 & 0 & 0 & \rho_{8}\\ 0 & \rho_{5} & 0 & 0 & 0 & 0 & 0 & 0 & -\rho_{14} & \Xi_{(10){10}}& -\rho_{1} & 0 & 0 & 0 & -\rho_{9}\\ 0 & 0 & \rho_{12} & -\rho_{14} & 0 & 0 & \rho_{5} & -\rho_{6} & 0 & -\rho_{2} &  \Xi_{(11){11}}& 0 & 0 & -\rho_{9} & 0\\ 0 & 0 & \rho_{13} & -\rho_{11} & 0 & 0 & -\rho_{7} & \rho_{4} & \rho_{3} & 0 & 0 &  \Xi_{(12){12}} & \rho_{8} & 0 & 0\\ 0 & 0 & \rho_{10} & 0 & 0 & -\rho_{4} & 0 & 0 & 0 & 0 & 0 & 0 & \Xi_{(13){13}} & -\rho_{11} & \rho_{3}\\ 0 & 0 & 0 & 0 & -\rho_{6} & 0 & 0 & 0 & 0 & 0 & -\rho_{10} & 0 & -\rho_{12} &  \Xi_{(14){14}}& -\rho_{2}\\ 0 & 0 & 0 & 0 & 0 & 0 & 0 & 0 & 0 & -\rho_{10} & 0 & 0 & 0 & -\rho_{1} &  \Xi_{(15){15}} \end{array}\right], \]}
where the diagonal elements are given as {\footnotesize $\diag(\Xi)=[-\rho_{1}-\rho_{2}-\rho_{6}-\rho_{12}, -\rho_{1}-\rho_{3}-\rho_{4}-\rho_{11}, -\rho_{3}-\rho_{4}-\rho_{5}-\rho_{9}-\rho_{11}-\rho_{14}, -\rho_{2}-\rho_{6}-\rho_{7}-\rho_{8}-\rho_{12}-\rho_{13}, -\rho_{7}-\rho_{8}-\rho_{10}-\rho_{13}, -\rho_{5}-\rho_{9}-\rho_{10}-\rho_{14}, -\rho_{2}-\rho_{3}-\rho_{4}-\rho_{6}-\rho_{11}-\rho_{12}, -\rho_{5}-\rho_{7}-\rho_{8}-\rho_{9}-\rho_{13}-\rho_{14}, -\rho_{1}-\rho_{7}-\rho_{8}-\rho_{13}, -\rho_{1}-\rho_{5}-\rho_{9}-\rho_{14}, -\rho_{2}-\rho_{5}-\rho_{6}-\rho_{9}-\rho_{12}-\rho_{14}, -\rho_{3}-\rho_{4}-\rho_{7}-\rho_{8}-\rho_{11}-\rho_{13}, -\rho_{3}-\rho_{4}-\rho_{10}-\rho_{11}, -\rho_{2}-\rho_{6}-\rho_{10}-\rho_{12}, -\rho_{1}-\rho_{10}] $}.

We have $\mu_\infty(\Xi)=0$, but the network can be verified {manually} to be weakly contractive with nonexpansivity depth 2. In particular, $S_-=\{1,\dots,8\}, S_{01}=\{9,\dots,14\}, S_{02}=\{15\}$. 
The weighting matrix $P_\theta B$ can be written as:
 {\tiny \[ P_\theta B = \frac 12 \left[\begin{array}{rrrrrrcc} 2 {\left(\theta +1\right)}^2 & 0 & 0 & 0 & 0 & 0 & 0 & 0\\ 0 & 2 {\left(\theta +1\right)}^2 & 0 & 0 & 0 & 0 & 0 & 0\\ 0 & 0 & 2 {\left(\theta +1\right)}^2 & 0 & 0 & 0 & 0 & 0\\ 0 & 0 & 0 & 2 {\left(\theta +1\right)}^2 & 0 & 0 & 0 & 0\\ 0 & 0 & 0 & 0 & 2 {\left(\theta +1\right)}^2 & 0 & 0 & 0\\ 0 & 0 & 0 & 0 & 0 & 2 {\left(\theta +1\right)}^2 & 0 & 0\\ 0 & 0 & 0 & 0 & 0 & 0 & 2 {\left(\theta +1\right)}^2 & 0\\ 0 & 0 & 0 & 0 & 0 & 0 & 0 & 2 {\left(\theta +1\right)}^2\\ 2 \theta +2 & 0 & 0 & -2 \theta -2 & 0 & 0 & 0 & 0\\ 0 & 2 \theta +2 & -2 \theta -2 & 0 & 0 & 0 & 0 & 0\\ 0 & 0 & \theta +1 & -\theta -1 & 0 & 0 & \theta +1 & -\theta -1\\ 0 & 0 & \theta +1 & -\theta -1 & 0 & 0 & -\theta -1 & \theta +1\\ 0 & 0 & 2 \theta +2 & 0 & 0 & -2 \theta -2 & 0 & 0\\ 0 & 0 & 0 & 2 \theta +2 & -2 \theta -2 & 0 & 0 & 0\\ 1 & -1 & 1 & -1 & 1 & -1 & 0 & 0 \end{array}\right].\] }
Hence, for an {arbitrary} given positive compact set in a stoichiometric class, the trajectories are strictly contractive with respect to the $P_\theta B$-weighted $\infty$-norm, where $\theta>0$ is small enough depending on the choice of the compact set. In addition, the trajectories  entrain to periodic time-varying parameters.

\section{Conclusions}
In this paper, we have shown that the graphical Lyapunov functions introduced in \cite{PWLRj,MA_LEARN}  automatically imply nonexpansiveness. Under additional conditions, they also imply strict contraction over arbitrary positive compact sets and  entrainment to periodic inputs. The provided tests can be checked computationally and are available on \texttt{github.com/malirdwi/LEARN}.

\appendix
\section*{Appendix: Additional Proofs}

\paragraph{Proof of Corollary \ref{cor}} 	The first statement is equivalent to the statement that $\kerGamma=\ker C$. Hence, we focus on the second statement. For the first direction, assume that the statement in Theorem \ref{th.metzler} holds. Fix $\ell$. Since $\tl\Lambda_\ell \in\mathbb R^{2m\times 2m}$, then we can write: (we drop the dependence on $\ell$ to simplify the notation)
\begin{equation}\label{e0}\tl\Lambda_\ell=\begin{bmatrix} \Lambda_{11} & \Lambda_{12} \\ \Lambda_{21} & \Lambda_{22} \end{bmatrix},\end{equation}
where $\Lambda_{11},\Lambda_{12},\Lambda_{21},\Lambda_{22}\in\mathbb R^{{m\times m}}$. By the statement in Theorem \ref{th.metzler}, $\Lambda_{11},\Lambda_{22}$ are Metzler and $\Lambda_{12},\Lambda_{21}$ are non-nonnegative. In addition, \eqref{e.metzler} can be written as:
\begin{align} \label{e2a} CQ_\ell = (\Lambda_{11}  - \Lambda_{12})  C \\ \label{e2b} CQ_\ell = (\Lambda_{22}  - \Lambda_{21} ) C \end{align}
Note that we can assume, w.l.o.g{.}, that $\Lambda_{22} =\Lambda_{11} $, $\Lambda_{21}=\Lambda_{12} $. Hence, we define $\Lambda_{\ell}:=\Lambda_{11}  - \Lambda_{12} $. It remains to show that $\mu_{\infty}(\Lambda_{\ell}) \le 0$. To that end, recall that $\mu_\infty(\Lambda_\ell)=\max_{i} \lambda_{ii}^{(\ell)}+\sum_{j\ne i}|\lambda_{ij}^{(\ell)}|$. Hence, we write:
\begin{align}\label{e1}\lambda_{ii}^{(\ell)}+\sum_j|\lambda_{ij}^{(\ell)}|= \lambda_{ii}^{11}-\lambda_{ii}^{12}+ \sum_{j\ne i} |\lambda_{ij}^{11}-\lambda_{ij}^{12}|\mathop{\le}^{(\star)} \lambda_{ii}^{11}+\lambda_{ii}^{12}+ \sum_{j\ne i} (\lambda_{ij}^{11}+\lambda_{ij}^{12}) \mathop{=}^{(\clubsuit)}0,
\end{align}
where the inequality $(\star)$ follows since $\lambda_{ii}^{12},\lambda_{ij}^{11},\lambda_{ij}^{12}$ are nonnegative, and equality $(\clubsuit)$ follows from  $\tl\Lambda_\ell \mathbf 1=0$. Hence, we have found a matrix $\Lambda_\ell$ that {satisfies} $\mu_\infty(\Lambda_\ell)\le 0$. 

Next, we  show the other direction. Fix $\ell$. Assume we have a matrix $\Lambda_\ell$ that satisfies $\mu_\infty(\Lambda_\ell)\le 0$.  Hence, for all $i$ we have $\lambda_{ii}^{(\ell)}\le0$ and ${\eta}_i:=-(\lambda_{ii}^{(\ell)}+\sum_j|\lambda_{ij}^{(\ell)}|)\ge0$.  We are going to construct two matrices $\Lambda_{11},\Lambda_{12}$ elementwise as follows assuming $i\ne j$: $\lambda_{ij}^{(11)}:=\max\{\lambda_{ij}^{(\ell)},0\}+{\eta}_i/{(2n)}, \lambda_{ij}^{(12)}:=\max\{-\lambda_{ij}^{(\ell)},0\}+{\eta}_i/{(2n)}$,  $\lambda_{ii}^{(12)}:={\eta}_i/{(2n)}$, $\lambda_{ii}^{(11)}:=\lambda_{ii}^{(\ell)}+{\eta}_i/{(2n)}$, where the latter is non-positive since $\lambda_{ii}^{(\ell)}+{\eta}_i/{(2n)}\le \lambda_{ii}^{(\ell)}+{\eta}_i \le \lambda_{ii}^{(\ell)}-\lambda_{ii}^{(\ell)}=0$.  Finally, we define $\Lambda_{22}:=\Lambda_{11},\Lambda_{21}:=\Lambda_{12}$. Define $\tl\Lambda_{\ell}$ as in \eqref{e0} which can be verified to be a Metzler matrix  satisfying  $\tl\Lambda_\ell \mathbf 1 = 0$ and \eqref{e2a}-\eqref{e2b}. \hfill $\blacksquare$

\paragraph{Proof of Theorem \ref{th.strict_contraction}}
Remember that $\rho_1,\dots,\rho_s$ correspond to the nonzero entries of $\partial R/\partial x$ which are strictly positive in $\mathbb R_+^n$ by AK3, hence the proof can be accomplished by showing that $\mu_{\infty}\left(\sum_{\ell=1}^s \rho_\ell    \Lambda_\ell \right )<0$ for any positive $\rho_1,\dots,\rho_s$. Assume $\Theta \in \mathbb R^{m\times m}$. Since $\Theta$ might have zero rows, the function can be written (by reordering species if necessary) as $\tV(r)=\|[\Theta_d,O] \Gamma r\|_{\infty}$, where $\Theta_d$ is an $n_d \times n_d$ positive diagonal matrix for some $n_d \le n$. Hence, $\tV(r)=\max_{i\in\{1,\dots,n_d\}}\gamma_i^T r$. To show the statement, it is sufficient to show that for each $i \in \{1,\dots,  n_d  \}$, there exists $\ell \in \{1,\dots,s\}$ such that $\lambda_\ell^{(ii)}  + \sum_{j \ne i} |\lambda_\ell^{(ik)}| <0$. \\
By AS1, $\gamma_i$ is orthogonal to a positive vector, and hence must have a strictly negative element, say $\gamma_{ij}$. Hence, this implies that $X_i$ is a reactant of $\mathbf R_j$, and $\partial R_j/\partial x_i$ is nonzero. Let $\Lambda_\ell$ {be} the matrix (defined in Corollary 8) corresponding to the reactant-{reaction} pair $(X_i,\R_j)$. Therefore, it can be seen it is possible to choose the $i$th row of $\Lambda_\ell$ as follows: $\lambda_\ell^{(ii)}= -\theta_{ii}$ and $\lambda_\ell^{(ij)}= 0, i \ne j$. Hence, $\lambda_\ell^{(ii)} + \sum_{j \ne i} |\lambda_\ell^{(ij)}| = -\theta_{ii} <0$. \strut \hfill $\blacksquare$

\section*{Acknowledgments}
M.A.A and E.D.S. {acknowledge} the support of grants AFOSR FA9550-21-1-0289 and FA9550-22-1-0316, and NSF/DMS-2052455.

\end{document}